\documentclass[12pt,a4paper,oneside]{amsart}

\usepackage{a4wide}

\usepackage{graphicx} 
\usepackage[a4paper,margin=1.15in]{geometry}
\usepackage{amsmath,amssymb,amsthm,mathtools}
\usepackage{bbm}
\usepackage{enumitem}
\usepackage{xfrac}

\usepackage{xcolor}

\usepackage[nopatch=footnote]{microtype}
\usepackage{hyperref}
\usepackage{booktabs}
\usepackage{lmodern}

\theoremstyle{plain}
\newtheorem{theorem}{Theorem}[section]
\newtheorem{proposition}[theorem]{Proposition}
\newtheorem{lemma}[theorem]{Lemma}
\newtheorem{corollary}[theorem]{Corollary}

\theoremstyle{definition}

\numberwithin{equation}{section}
\numberwithin{figure}{section}
\numberwithin{table}{section}

\theoremstyle{remark}
\newtheorem{remark}[theorem]{Remark}
\newcommand{\R}{\mathbb{R}}
\newcommand{\E}{\mathbb{E}}
\newcommand{\dd}{\textup{d}}
\newcommand{\calM}{\mathcal{M}}
\newcommand{\calP}{\mathcal{P}}
\newcommand{\N}{\mathbb{N}}

\newcommand{\Fhat}[1]{\widehat{#1}}
\newcommand{\diam}{\textup{diam}}
\newcommand\res{\mathop{\hbox{\vrule height 7pt width .5pt depth 0pt \vrule height .5pt width 6pt depth 0pt}}\nolimits}

\newcommand{\expAhlfors}{\beta}

\usepackage{calc}
\usepackage{ragged2e}
\usepackage{setspace}
\newcommand{\FundingLogos}{%
  \raisebox{0pt}{\includegraphics[height=1.5cm]{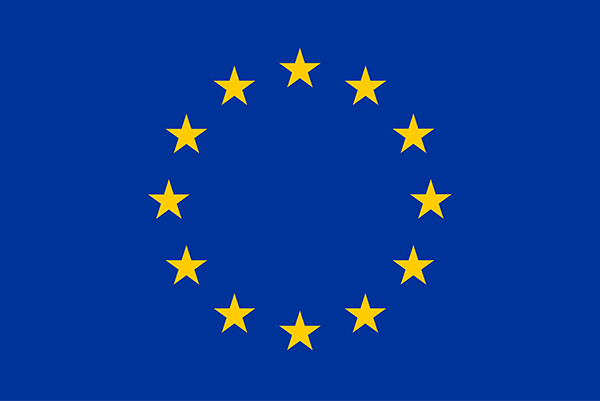}}%
  \hspace{1em}%
  \raisebox{0pt}{\includegraphics[height=1.5cm]{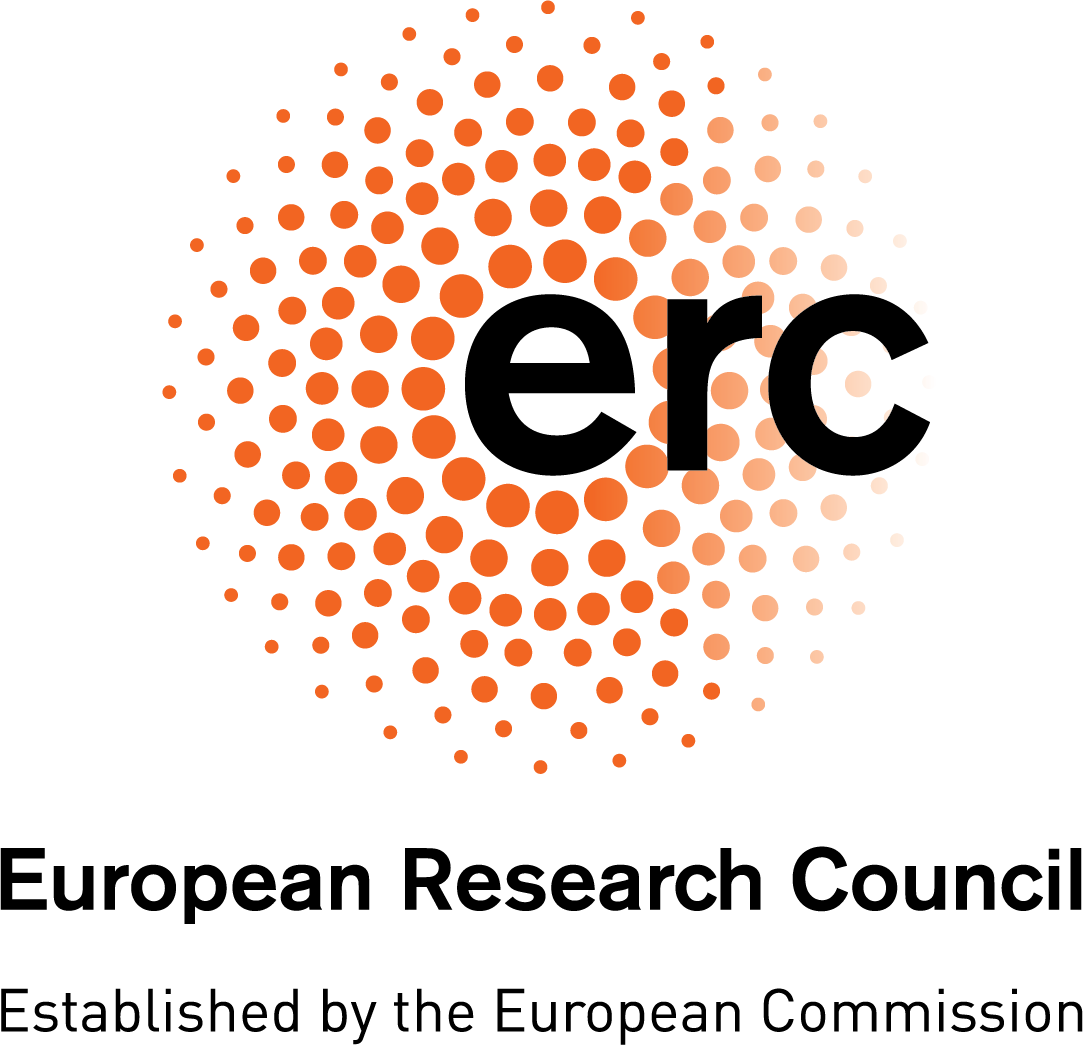}}%
}

\usepackage{tcolorbox}
\tcbset{colback=white,boxrule=0.5pt,arc=2pt,left=4pt,right=4pt,top=4pt,bottom=4pt}

\title{Sharp Rates of MMD Empirical Estimation with Power Kernels}
\author{Francesco Colasanto}
\address[Francesco Colasanto]{Department of Mathematics, CIT School, Technical University of Munich, Munich, Germany}
\email{francesco.colasanto@tum.de}
\author{Matteo Focardi}
\address[Matteo Focardi]{DiMaI U. Dini, Università di Firenze, Florence, Italy}
\email{matteo.focardi@unifi.it}
\author{Massimo Fornasier}
\address[Massimo Fornasier]{Department of Mathematics, CIT School, Technical University of Munich, Munich \& Munich Center for Machine Learning (MCML), Munich, Germany}
\email{massimo.fornasier@ma.tum.de}
\author{Francesco Mattesini}
\address[Francesco Mattesini]{Department of Mathematics, CIT School, Technical University of Munich, Munich \& Munich Center for Machine Learning (MCML), Munich, Germany}
\email{francesco.mattesini@tum.de}
\date{May 2026}

\thanks{F.\,C., M.\,F. and F.\,M. acknowledge the support of the ERC Advanced Grant NEITALG, grant agreement No. 101198055.}

\begin{document}

\begin{abstract}
We establish quantitative rates of convergence for the empirical estimation
of probability measures by means of 
the Maximum Mean Discrepancy (MMD) with
power kernel $K_q(x,y) = -|x-y|^q$, $q \in (0,2)$. 
The resulting
discrepancy is the classical \emph{energy distance}
$$\mathcal E_q^2(\mu, \omega) = -\frac{1}{2}\iint_{\mathbb{R}^d \times \mathbb{R}^d} |x-y|^q \, d(\mu - \omega)(x)\, d(\mu - \omega)(y),$$
and we ask how fast the best $N$-point empirical approximation
$\inf_{\mu_N \in \mathcal{P}^N}\mathcal{E}_q(\mu_N,\omega)$ decays as
$N \to \infty$. Given a probability measure $\omega$ on $\mathbb{R}^d$
with compact support satisfying an Ahlfors regularity condition of exponent $\beta \in (0,d]$, we prove
that the sharp two-sided bound
$$\mathcal E_q(\mu_N, \omega) \asymp N^{-\frac{1}{2}\left(1 + \frac{q}{\beta}\right)}$$
holds both for the worst-case empirical measure $\mu_N$ (lower bound,
holding for every configuration of $N$ points) and for an optimally chosen
empirical measure $\mu_N$ (upper bound). This complements the qualitative
consistency result of Fornasier and H\"utter \cite{fornasier2014consistency},
who proved narrow convergence of the minimizers of $\mathcal E_q^2(\cdot,
\omega)$ over empirical measures without quantitative rates. 
\end{abstract}

\maketitle

\section{Introduction}

\subsection{Measure quantization via MMD with power kernel and the main result}\label{sec:intro_MMD}

\emph{Measure quantization} is a central problem in approximation theory, numerical integration, and modern machine learning. Given a target probability measure $\omega$ on $\R^d$ and an integer $N\geq 1$, one seeks an empirical measure
\begin{equation*}
    \mu_N \;=\; \frac{1}{N}\sum_{i=1}^N \delta_{x_i}, \qquad x_1,\dots,x_N\in\R^d,
\end{equation*}
that approximates $\omega$ as accurately as possible with respect to a chosen discrepancy. Different choices of discrepancy lead to different quantization theories. The classical quantization theory, going back to Zador \cite{zador1966topics} adopts the $p$-Wasserstein distance, see also \cite{GrafLuschgy} for a throughoug review. In the last two decades the machine-learning, statistics, and image-processing communities \cite{gretton2006kernel, sejdinovic2013equivalence, schmaltz2010electrostatic} have increasingly turned to a different family of discrepancies, namely the \emph{Maximum Mean Discrepancy} (MMD). Given a symmetric kernel $K:\R^d\times\R^d\to\R$, the squared MMD between two probability measures $\mu,\omega\in\mathcal P(\R^d)$ is
\begin{equation}\label{eq:mmd-intro}
    \mathrm{MMD}_K^2(\mu,\omega)
    \;=\;
    \iint_{\R^d\times\R^d} K(x,y)\,d(\mu-\omega)(x)\,d(\mu-\omega)(y),
\end{equation}
which is non-negative whenever $K$ is positive (or conditionally negative) definite. A particularly important and analytically rich subfamily of MMD, and the one we shall focus on in this paper, is obtained by choosing the \emph{power kernel}
\begin{equation*}
    K_q(x,y) \;=\; -|x-y|^q,\qquad q\in(0,2),
\end{equation*}
which, by a classical result of Schoenberg \cite{schoenberg1938monotone,schoenberg1938positive}, is conditionally negative definite on $\R^d$, and therefore yields a non-negative discrepancy on probability measures. The resulting quantity, classically known as the \emph{energy distance}, is
\begin{equation}\label{eq1}
  \mathcal{E}_q^2(\mu,\omega)
  \;=\;
  -\frac{1}{2}
  \iint_{\R^d\times\R^d}
  |x-y|^q\,d(\mu-\omega)(x)\,d(\mu-\omega)(y),
  \qquad 0 < q < 2,
\end{equation}
defined as soon as $\mu,\omega$ have finite $q$-moments, and coincides up to a factor of $1/2$ with $\mathrm{MMD}_{K_q}^2(\mu,\omega)$. The choice $q=1$ recovers the original electrostatic halftoning energy of Schmaltz et al.~\cite{schmaltz2010electrostatic} and lies at the heart of the energy statistics introduced by Sz\'ekely \cite{szekely2007measuring,szekely2013energy,szekely2023energy}; we refer to Section~\ref{sec:histreview} below for a detailed historical review. As we shall discuss in Section~\ref{sec:compENvsWas}, the use of $\mathcal{E}_q$ in lieu of the Wasserstein distance offers decisive computational and statistical advantages in high dimensions, while still inducing a comparable topology on $\mathcal{P}(\R^d)$, see Proposition~\ref{prop:edtop} for a precise statement.

The natural quantitative question for this quantization problem is then:
\medskip

\centerline{\emph{How fast does the quantization error $\inf_{\mu_N\in\mathcal P^N}\mathcal{E}_q(\mu_N,\omega)$ decay as $N\to\infty$?}}
\medskip

where $\mathcal{P}^N$ denotes the class of $N$-point empirical measures. Fornasier and H\"utter \cite{fornasier2014consistency} established that minimizers of $\mathcal{E}_q(\cdot,\omega)$ on $\mathcal P^N$ converge narrowly to $\omega$, but no quantitative rate of convergence was provided. The main contribution of this paper is to fill this gap by establishing \emph{sharp two-sided bounds} on this infimum, under the natural assumption that $\omega$ is Ahlfors regular of exponent $\beta\in(0,d]$. Our main result can be stated as follows.
\begin{theorem}\label{t:main-intro}
Assume $\omega\in \mathcal{P}(\R^d)$ such that $\mathrm{spt}\, \omega$ is connected, and that there exist $C_\omega > 1,R_0>0$ and $\expAhlfors\in(0,d]$ such that 
\begin{equation}\label{e:alfhors reg0}
C^{-1}_\omega r^\expAhlfors \leq \omega(B_r(x)) \leq C_\omega r^\expAhlfors 
\end{equation}
for every $x\in \mathrm{spt}\, \omega$ and every $r\in (0,R_0)$. Fix $q\in (0,2)$ and the constant $\kappa_{d,q} = 2\pi^{d/2}\Gamma(1-q/2)/(2^q\Gamma((d+q)/2))>0$. Then the following two statements hold:
\begin{itemize}
  \item[(i)]   there exist constants $c_L>0$ and $N_0\in\N$ such that for every $N\ge N_0$ and every empirical measure $\mu_N=\frac1N\sum_{i=1}^N\delta_{x_i}$,
\begin{equation}\label{eq:Hdots-lower0}
\mathcal{E}_q(\mu_N,\omega) = \frac{1}{\sqrt{\kappa_{d,q}}}\|\mu_N-\omega\|_{H_0^{-s}} \ \ge\ c_L\,N^{-\frac{1}{2}(1+\frac{q}{\expAhlfors})};
\end{equation}
\item[(ii)]   there exist constants $C_L > 0$ and $N_0 \in \mathbb{N}$ such that for every $N\geq N_0$ there exist $x_1,\dots,x_N \in \textup{spt} \; \omega$ such that 
\begin{equation}\label{e:upper-bound-main}
\mathcal{E}_q(\mu_N,\omega) = \frac{1}{\sqrt{\kappa_{d,q}}}\|\mu_N-\omega\|_{H_0^{-s}}\le\ C_L\,N^{-\frac{1}{2}(1+\frac{q}{\expAhlfors})},
\end{equation}
where ${\mu_N}:=\frac1N\sum_{i=1}^N \delta_{x_i}$.
\end{itemize}
\end{theorem}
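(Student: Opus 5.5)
We work throughout with the Fourier representation implicit in the statement. For signed measures $\nu$ of zero total mass and finite $q$-moment, $\mathcal E_q^2(\nu)=\kappa_{d,q}^{-1}\int_{\R^d}|\hat\nu(\xi)|^2|\xi|^{-d-q}\,d\xi$, i.e. $\|\nu\|_{H_0^{-s}}$ with $s=(d+q)/2$. A key structural fact is the conditional negative definiteness of $-|x-y|^q$. Combined with this representation, it lets us write $\mathcal E_q^2$ both as an energy in physical space and as a weighted $L^2$ norm in frequency. We use the frequency side for the lower bound and the physical side for the upper bound.

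\textbf{Lower bound (i).} We test $\mu_N-\omega$ against a suitable bump. Let $r=\lambda N^{-1/\beta}$ with $\lambda$ small. The $N$ balls $B_{r}(x_i)$ carry total $\omega$-mass at most $C_\omega N r^\beta=C_\omega\lambda^\beta\le 1/2$. Since $\mathrm{spt}\,\omega$ is covered by $\asymp r^{-\beta}$ disjoint balls of radius $r$ centred on the support (Ahlfors regularity), we can select $M\gtrsim N$ disjoint balls $B_r(y_j)$, $y_j\in\mathrm{spt}\,\omega$, that avoid all the points $x_i$; we take them at distance $\ge r$ from the $x_i$, using $B_{2r}$ in the counting. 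Let $\varphi=\sum_j\phi((\cdot-y_j)/r)$ with $\phi\in C_c^\infty(B_1)$, $0\le\phi\le1$, $\phi=1$ on $B_{1/2}$. Then $\int\varphi\,d\mu_N=0$ and $\int\varphi\,d\omega\gtrsim M r^\beta\gtrsim 1$. By duality, $|\langle\mu_N-\omega,\varphi\rangle|\le\|\mu_N-\omega\|_{H^{-s}_0}\|\varphi\|_{\dot H^{s}}$; zero mass lets us use the homogeneous norm. It remains to estimate $\|\varphi\|_{\dot H^s}^2$. The bumps have disjoint supports, but $\dot H^s$ is nonlocal for non-integer $s$, so we bound cross terms via the kernel representation of the Gagliardo seminorm, or simply interpolate between $\dot H^{k}$ norms for integers $k$ (which are local): $\|\varphi\|_{\dot H^k}^2\lesssim M r^{d-2k}$. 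Interpolation then gives $\|\varphi\|_{\dot H^s}^2\lesssim M r^{d-2s}=M r^{-q}$. Hence
\[
\|\mu_N-\omega\|_{H_0^{-s}}\gtrsim \frac{M r^\beta}{\sqrt{M}\,r^{-q/2}}=\sqrt{M}\,r^{\beta+q/2}\asymp N^{1/2}N^{-1-q/(2\beta)},
\]
which is exactly $N^{-\frac12(1+\frac q\beta)}$.

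\textbf{Upper bound (ii).} We partition $\mathrm{spt}\,\omega$ into $N$ cells $A_1,\dots,A_N$, each of mass exactly $1/N$ and diameter $\lesssim N^{-1/\beta}$, and place one $x_i\in A_i\cap\mathrm{spt}\,\omega$ in each. To construct the cells, take a dyadic-type (Christ) decomposition of the Ahlfors regular support at scale $\rho\asymp N^{-1/\beta}$, whose cubes have mass $\asymp 1/N$. Then redistribute mass along a chain to obtain exact masses $1/N$; here we use connectedness of the support and continuity of $\omega$, which has no atoms. The redistribution splits cubes into pieces of mass exactly $1/N$ whose diameters stay $\lesssim\rho$; this is possible by grouping $O(1)$ neighbouring cubes. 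Write $\mu_N-\omega=\sum_i\nu_i$ with $\nu_i=N^{-1}\delta_{x_i}-\omega\res A_i$, each of zero mass supported in a ball of radius $\rho$. Then
\[
2\mathcal E_q^2=-\sum_{i,j}\iint|x-y|^q\,d\nu_i\,d\nu_j.
\]
For the diagonal and near terms ($|i-j|$ adjacent), each term is bounded by $\rho^q N^{-2}$, and there are $O(N)$ of them, giving $N^{-1}\rho^q=N^{-1-q/\beta}$. For far pairs, with cells at distance $D\gg\rho$, zero mass of both $\nu_i$ and $\nu_j$ lets us Taylor expand $|x-y|^q$ to second order: the bound is $\lesssim N^{-2}\rho^2D^{q-2}$. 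Summing over $j$ with the Ahlfors count ($\#\{j: D\sim 2^k\rho\}\lesssim 2^{k\beta}$) gives $\sum_k 2^{k\beta}2^{k(q-2)}$. This sum converges only if $\beta+q<2$, which is the main obstacle. For general $\beta$, the first-order (dipole) cancellation alone is insufficient.

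Our first remedy is to choose $x_i$ as the $\omega$-barycenter of $A_i$, killing the dipole moment; this may require allowing $x_i\notin\mathrm{spt}\,\omega$, with a correction afterwards. Alternatively, if that fails, we replace the deterministic choice by random $x_i\sim N\omega\res A_i$ independently. Then $\E\,\mathcal E_q^2=\sum_i\E\,\mathcal E_q^2(\nu_i)$ exactly: cross terms vanish by independence and the mean-zero property, since $\E\nu_i=0$ as a measure. Each term is $\lesssim N^{-2}\rho^q$, so $\E\,\mathcal E_q^2\lesssim N^{-1-q/\beta}$, and some realization with all $x_i\in\mathrm{spt}\,\omega$ achieves the bound. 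This stratified-sampling argument is the route we would actually commit to. Its main technical burden is the equal-mass, small-diameter partition, which is where connectedness and Ahlfors regularity enter.
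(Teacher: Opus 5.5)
Your proof is correct. The upper bound follows the paper: one independent sample per cell of an equimass partition with $\diam(A_i)\lesssim N^{-1/\beta}$, with cross terms cancelling by independence.

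You check that cancellation directly on the kernel, by expanding $-\sum_{i,j}\iint|x-y|^q\,d\nu_i\,d\nu_j$ and using $\E[\delta_{X_i}]=N\omega\res A_i$ weakly. The paper instead works with $\overline{\calM_0}$-valued random variables and identifies their expectations via Corollary~\ref{c:Schwartz-separates}. Your route is more elementary; the paper's Hilbertian set-up pays off later in Section~\ref{sec:W1}.

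The paper does not construct the partition itself; it imports it from Gigante--Leopardi \cite[Theorem 2]{GL17}. Your ``chain'' sketch matches that reference's construction (David--Christ cubes combined with a Feige--Schechtman-type redistribution), and it can be made precise as follows. Take the Euler tour of a spanning tree of the adjacency graph of Christ cubes at scale $\asymp N^{-1/\beta}$. This graph is connected because $\mathrm{spt}\,\omega$ is, and has degree bounded by some $D$ by doubling. Split each cube's mass among its at most $D+1$ visits, then cut the resulting sequence into consecutive runs of mass exactly $1/N$. Each run then meets only $O(1)$ consecutive, hence nearby, cubes.

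The genuine difference is in the lower bound. The paper bounds $\|\sum_j\psi_{r,z_j}\|_{H_0^s}^2\lesssim N$ with an almost-orthogonality lemma: Schwartz decay of the correlation $\Gamma$, plus a counting lemma for $4r$-separated centres. You instead use two facts. Integer-order seminorms are sums of $L^2$ norms of derivatives, so they are additive over disjointly supported bumps. Hölder in frequency then gives $\int|\xi|^{2s}|\widehat\varphi|^2\le(\int|\xi|^{2k_0}|\widehat\varphi|^2)^{1-\theta}(\int|\xi|^{2k_1}|\widehat\varphi|^2)^{\theta}$ for $s=(1-\theta)k_0+\theta k_1$. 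Your argument is shorter and needs only disjoint supports, not separation. The paper's argument deals with the non-local fractional norm directly.

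One small correction: for $x_i\notin\mathrm{spt}\,\omega$, Ahlfors regularity only gives $\omega(B_r(x_i))\le 2^\beta C_\omega r^\beta$. This just changes the choice of $\lambda$.
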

As an immediate consequence we obtain the sharp asymptotics
\begin{equation*}
\min_{\mu \in \mathcal P^N} \mathcal{E}_q(\mu,\omega)  \asymp N^{-\frac{1}{2}(1+\frac{q}{\expAhlfors})}, \quad N \to \infty.
\end{equation*}
Before proceeding, we comment on two notable features encoded in the exponent $-\frac{1}{2}(1+q/\expAhlfors)$. First, the rate depends on the \emph{intrinsic dimension} $\expAhlfors$ of $\mathrm{spt}\,\omega$ and not on the ambient dimension $d$: when $\omega$ is supported on a low-dimensional set (e.g.\ a smooth submanifold or a self-similar fractal), the convergence is genuinely faster than the worst-case dimension would suggest. Second, the rate $N^{-\frac{1}{2}(1+q/\expAhlfors)}$ is strictly faster than the i.i.d. rate $N^{-1/2}$ obtained by sampling $N$ points from $\omega$ (see equation \eqref{eq:stat} below and the discussion therein): optimal quantization beats Monte Carlo, and quantitatively so.

The proof of Theorem~\ref{t:main-intro} rests on two main ingredients. A key one is a revisitation of Schoenberg's embedding theory (see Theorem~\ref{t:equivalenza} below): we establish that
\begin{equation}\label{eq:equiv}
  \mathcal E_q^2(\mu, \omega)
  \;=\; \kappa_{d,q}^{-1}
  \|\mu - \omega\|^2_{{H}^{-s}_0}
  \;=\;
\kappa_{d,q}^{-1} \int_{\R^d}
  |\hat{\mu}(\xi) - \hat{\omega}(\xi)|^2
  \,|\xi|^{-(d+q)}\, d\xi,
  \qquad s = \tfrac{d+q}{2},
\end{equation}
where ${H}^{-s}_0(\R^d)$ is the homogeneous Sobolev spaces of negative exponent $-s$ \cite{auricchio2026kinetic, fornasier2014consistency, szekely2023energy},
identifying the squared energy distance with the squared homogeneous Sobolev norm of negative exponent of the signed measure $\mu-\omega$. Armed with this identification, the lower bound \eqref{eq:Hdots-lower0} is proved by a duality argument in the pre-Hilbert space $(\mathcal{M}_0(\R^d),\|\cdot\|_{\dot H^{-s}_0})$, using carefully constructed bump functions adapted to the geometry of $\omega$. The upper bound \eqref{e:upper-bound-main} is proved probabilistically by constructing an equimass partition of $\mathrm{spt}\,\omega$ and sampling one point per cell, exploiting the Ahlfors regularity to control the diameter of the cells. As a by-product, we obtain the sharp \emph{stochastic} asymptotics
\begin{equation*}
\E [\mathcal{E}^2_q(\mu_N,\omega)]  \asymp N^{-(1+\frac{q}{\expAhlfors})}, \quad N \to \infty,
\end{equation*}
for the random empirical measure obtained by sampling one point per cell of an equimass partition (see formula \eqref{e:random empirical} and Theorem~\ref{t:upper bound in media}), thus connecting the statistical convergence rate of $\mathcal E_q$ with classical numerical integration / quantization theory \cite{GrafLuschgy}. The connectedness assumption on the support of $\omega$ is used in the upper bound inequality only and can be ralaxed if $q \in (0, \expAhlfors)$, see Corollary~\ref{c:upper senza connesso}.\\

We now proceed to a detailed historical account of the energy distance, situating our contribution within several converging streams of research.

\subsection{Energy distances and their historical development}\label{sec:histreview}
The energy distance \eqref{eq1} has roots in multiple distinct mathematical traditions of the twentieth century, which gradually converged over the decades.
 
The mathematical starting point is the classical \emph{Riesz energy theory} of the
1920s--30s. The $\alpha$-Riesz capacity of a compact set is built around the notion of the
$\alpha$-energy of a measure $\mu$, defined as
$E_\alpha(\mu) = \iint |x-y|^{-\alpha}\,d\mu(x)\,d\mu(y)$ for $\alpha > 0$.
This is the classical \emph{repulsive} (singular) case, opposite in sign to the
kernel $|x-y|^q$ with $q > 0$.
In modern language, the Riesz energy with external field $V$ is
\[
  \mathcal{E}^V_s(\mu)
  \;=\;
  \iint \!\bigl(K_s(x-y)+V(x)+V(y)\bigr)\,d\mu(x)\,d\mu(y),
\]
where, for $-2 < s < d$, the kernel is $K_s = s^{-1}|\cdot|^{-s}$
(with the convention $K_0 = -\log|\cdot|$).
The special case $s = d-2$ recovers the Newton--Coulomb kernel.
When $s < 0$, i.e.\ when the exponent $q = -s > 0$, the kernel becomes
\emph{attractive} rather than repulsive, and one arrives precisely at the
kernel $|x-y|^q$ appearing in $\mathcal{E}_q$.
 
The decisive abstract step was taken by Schoenberg in two landmark
papers published in 1938 \cite{schoenberg1938monotone,schoenberg1938positive}.
Schoenberg's theorem asserts that $e^{-t\rho(\cdot,\cdot)}$ is a positive-definite kernel for
every $t > 0$ if and only if $\rho(\cdot,\cdot)$ is a \emph{conditionally
negative-definite} kernel.
He also established that a metric space embeds isometrically into a Hilbert space
if and only if its distance function is conditionally negative definite.
The key example, proved in those same papers, is that $|x-y|^q$ for $0 < q < 2$
is conditionally negative definite on $\R^d$, i.e.
\[
  \sum_{i,j} c_i c_j |x_i - x_j|^q \;\leq\; 0
  \qquad\text{whenever } \sum_i c_i = 0.
\]
This is precisely the algebraic condition that guarantees $\mathcal{E}_q(\mu,\omega)\geq 0$
whenever $\mu$ and $\omega$ are probability measures (or more generally signed measures
with equal total mass).
For $q \geq  2$ the kernel $|x-y|^q$ ceases to be conditionally negative definite,
and the functional $\mathcal{E}_q$ can take negative values; in the limiting case
$q = 2$ one finds a degenerate expression related to the variance, while for $q > 2$
no metric structure on the space of probability measures is obtained in general, see \cite[Chapter 3, Section 2, Corollary 2.10]{BergChristensenRessel1984}.
 
In the 1950s--60s, Jacques Deny (1950) and subsequently Bent Fuglede
(1960) developed the functional-analytic theory of energy spaces for Radon measures.
Deny introduced the pre-Hilbert space of signed measures with finite Riesz energy,
and Fuglede proved its completeness properties.
A central result in this line of work is that the pre-Hilbert space of Radon measures
on $\R^n$ with finite weak $\alpha$-Riesz energy embeds isometrically into its
completion, which is a Hilbert space of tempered distributions with energy defined via
the Fourier transform.
This Fourier-analytic perspective reveals that $\mathcal{E}_q(\mu,\omega)$ is, up to
a positive constant, the squared norm of $\mu - \omega$ in a reproducing-kernel
Hilbert space, with reproducing kernel $k(x,y) = -|x-y|^q$
(which is positive definite on the subspace of measures with zero total mass
precisely because $|x-y|^q$ is conditionally negative definite).
 
The connection to mathematical statistics was forged independently by G\'abor J.\ Sz\'ekely in the mid-1980s.
During a series of lectures delivered in 1984--85 at the Hungarian Academy of Sciences
in Budapest and at MIT, Yale, and Columbia, Sz\'ekely introduced the class of
\emph{energy statistics} (E-statistics), motivated by an analogy with Newton's
gravitational potential energy: statistical observations are thought of as heavenly bodies
governed by a potential energy that equals zero if and only if a given statistical null
hypothesis is true.
The squared energy distance between two probability distributions $F$ and $G$
on $\R^d$, with independent random vectors $X, X' \sim F$ and $Y, Y' \sim G$, is
\[
  D^2(F,G)
  \;=\;
  2\,\mathbb{E}\|X-Y\|^q
  \,-\,\mathbb{E}\|X-X'\|^q
  \,-\,\mathbb{E}\|Y-Y'\|^q,
  \qquad q\in(0,2).
\]
Writing this in measure-theoretic terms immediately gives
$D^2(\mu,\omega) = -\iint |x-y|^q\,d(\mu-\omega)(x)\,d(\mu-\omega)(y)$,
which is exactly $2\,\mathcal{E}_q^2(\mu,\omega)$.
The energy distance satisfies all the axioms of a metric on the space of probability
measures: it is non-negative, symmetric, satisfies the triangle inequality, and equals
zero if and only if $F = G$.
The formal results were published and developed in subsequent work, including the
monograph \cite{szekely2023energy} and the influential paper \cite{szekely2013energy}  and  the paper \cite{szekely2007measuring}, which introduced \emph{distance covariance} and
\emph{distance correlation} as independence measures derived from the same framework.
 
A parallel and independently developed line of work arose in the machine-learning
community under the name of \emph{Maximum Mean Discrepancy} (MMD) \cite{gretton2006kernel,gretton2012kernel}.
In this setting one defines, for a characteristic kernel $k$,
\[
  \mathrm{MMD}^2(\mu,\omega)
  \;=\;
  \iint K(x,y)\,d(\mu-\omega)(x)\,d(\mu-\omega)(y).
\]
The choice $K(x,y) = -|x-y|^q$ (restricted to zero-mean signed measures) yields
$\mathrm{MMD}^2 = 2\,\mathcal{E}_q^2$, so the energy distance is a special case of the
MMD with a Riesz-type kernel.
The equivalence between distance-based and kernel-based methods for hypothesis testing
was clarified by several authors in the 2000s and 2010s \cite{sejdinovic2013equivalence,lyons2013distance,shen2021exact}, firmly unifying the statistical and machine-learning perspectives.
 
In summary, the functional $\mathcal{E}_q(\mu,\omega)$ sits at the intersection of
classical potential theory (Riesz, 1920s), the theory of conditionally negative-definite
kernels and Hilbert-space embeddings (Schoenberg, 1938), the functional analysis of
energy spaces (Deny--Fuglede, 1950--60s), and the modern theory of energy statistics
(Sz\'ekely, 1984--present) and kernel methods (MMD, 2000s).
The thread connecting all of these is the conditional negative definiteness of
$|x-y|^q$ for $0 < q < 2$, a property discovered in abstract form by Schoenberg and
subsequently re-interpreted from completely independent starting points by
statisticians and the machine-learning community alike.

After 2010, seemingly without full knowledge of this previous history, the energy distance \eqref{eq1} (for $q=1$) was independently re-discovered in the image processing community as a tool to provide image halftoning (or image quantization). 

Schmaltz et al. ~\cite{schmaltz2010electrostatic} introduced electrostatic halftoning
as a practical image processing method: given a grayscale image representing a probability
measure $\omega$, they place $N$ point masses by minimizing an energy that balances pairwise
repulsion between the points and attraction toward the image's intensity.
Their discrete functional is exactly
\[
  \mathcal{E}_N^2(x)
  \;=\;
  \frac1N\sum_{i=1}^N \int |x_i - y|\,d\omega(y)
  \;-\;
  \frac{1}{2N^2}\sum_{i,j} |x_i - x_j|,
\]
which is a particle discretization of the continuous energy above with $q=1$.
The paper is entirely computational and heuristic: it works well visually, but
no convergence or consistency theory is provided.
 
Teuber et al.~\cite{teuber2011dithering} then gave this functional a rigorous
variational foundation. They recast the halftoning problem as the minimization of a
difference-of-convex-functions (DC) functional, proved analytical properties such as
coercivity, characterized one-dimensional minimizers explicitly, and proposed a
forward-backward splitting algorithm accelerated by the non-equispaced fast Fourier
transform (NFFT). The Fourier acceleration is significant: evaluating $|x_i - x_j|^q$
for $N$ particles naively costs $O(N^2)$, but the NFFT reduces this to $O(N \log N)$.
However, Teuber et al.\ still do not address whether the discrete minimizers actually
converge to $\omega$ as $N \to \infty$, i.e.\ whether the method is \emph{consistent}.
\\
This issue was the starting point of the work by Fornasier and H\"utter~\cite{fornasier2014consistency}. Their main contribution is a proof that the empirical measures
$\mu_N^* = \frac1N\sum_{i=1}^N \delta_{x_i}$, supported on the minimizers of
$\mathcal{E}_N^2$, converge narrowly to $\omega$ as $N \to \infty$.
The key mathematical tool is $\Gamma$-convergence of the discrete functionals to a
continuous target functional. The central difficulty is that the continuous energy is
not lower semi-continuous with respect to the narrow topology and is not even
well-defined on all probability measures. They resolve both issues via a suitable
Fourier representation: for $\psi = |\cdot|^q$ with $1 \le q < 2$, the energy
equals (up to a constant)
\[
  \widehat{\mathcal{E}}[\mu]
  \;=\;
  \kappa_{d,q}^{-1}
  \int_{\R^d}
  \bigl|\hat{\mu}(\xi) - \hat{\omega}(\xi)\bigr|^2
  \,|\xi|^{-d-q}\,d\xi,
\]
which is manifestly non-negative, lower semi-continuous, and zero if and only if
$\mu = \omega$.
 
This Fourier formula is precisely the energy distance (or MMD with a
Riesz-type kernel $K(x,y) = -|x-y|^q$) in its spectral form, as discussed earlier in the
energy distance literature (see also ~\cite[Chapter 3, Section 2, Corollary 2.10]{BergChristensenRessel1984}): the squared energy distance between $\mu$ and $\omega$
can always be written as a weighted $L^2$ distance between their Fourier transforms,
with weight $|\xi|^{-d-q}$~\cite{szekely2013energy}. The connection to the theory
of conditionally negative-definite kernels~\cite{schoenberg1938monotone,schoenberg1938positive}
is explicit in the paper, which relies on the fact that $|\cdot|^q$ for $0 < q < 2$
is a conditionally negative-definite kernel, the property first identified by
Schoenberg~\cite{schoenberg1938monotone,schoenberg1938positive} and later exploited
systematically by Sz\'ekely and Rizzo~\cite{szekely2007measuring,szekely2013energy}
in the statistical theory of energy distances. 

The related paper~\cite{chauffert2017projection} addresses the following general problem:
given a probability measure $\omega$ on $\Omega \subset \R^d$, find its best approximation
in a set $\mathcal{M}_N$ of Radon measures, where the discrepancy is measured by
\[
  \inf_{\mu \in \mathcal{M}_N}
  \bigl\| h * (\mu - \omega) \bigr\|_2^2,
\]
with $h \in L^2(\Omega)$ a fixed convolution kernel.
 
The choice of the kernel $h$ is crucial.
When $h$ is taken to be the kernel associated with the energy distance $|\cdot|^{q/2}$,
the functional reduces exactly to the energy distance between $\mu$ and $\omega$,
directly connecting this work to the framework of
Fornasier--H\"utter~\cite{fornasier2014consistency} and to the broader literature
on energy statistics~\cite{szekely2013energy,sejdinovic2013equivalence}.
The central theoretical contribution is a necessary and sufficient condition on the
sequence $(\mathcal{M}_N)_{N \in \N}$ that guarantees weak convergence of the
projections $\mu^*_N \to \omega$ as $N \to \infty$.
This condition is of approximation type: the sequence of sets must be rich enough
to approximate, in the weak sense, any probability measure.
The proof relies on tools from functional analysis and measure theory, in particular
on the weak-$*$ compactness of the space of Radon measures.
 
The authors show that this abstract projection problem encompasses, as special cases,
two concrete image rendering tasks.
The first is stippling, i.e.\ representing an image with $N$ point masses
(Dirac deltas), where $\mathcal{M}_N$ is the set of empirical measures supported on
$N$ atoms: in this case the problem reduces exactly to the electrostatic halftoning
of Schmaltz et al.~\cite{schmaltz2010electrostatic} and to the measure quantization
framework of Fornasier--H\"utter~\cite{fornasier2014consistency}.
The second is continuous line drawing, where the image is represented by a
single continuous curve of bounded length: here $\mathcal{M}_N$ is the set of measures
that are absolutely continuous with respect to arc-length measure on curves of length
$\leq N$, a much richer and geometrically structured family.
 
On the computational side, the authors propose a forward-backward splitting algorithm
to solve the discretized version of the problem.
As in Teuber et al.~\cite{teuber2011dithering}, the non-equispaced fast Fourier
transform (NFFT) is used to reduce the computational cost from $O(N^2)$ to
$O(N \log N)$. Numerical results include the automatic synthesis of artistic drawings
(single-stroke and stippled reproductions of images), demonstrating the versatility
of the framework.
 
In the broader picture, this work can be seen as a unifying generalization of the
preceding contributions: whereas Schmaltz et al.\ and Teuber et al.\ focus on sets
of discrete measures (stippling), and Fornasier--H\"utter prove consistency for that
specific case, Chauffert et al.\ extend the theory to arbitrary structured sets of
measures and provide abstract convergence conditions. The connection with the energy
distance is explicit: by Parseval's theorem, the functional $\|h* (\mu - \omega)\|_2^2$
is a weighted $L^2$ distance between the Fourier transforms of $\mu$ and $\omega$,
\[
  \bigl\| h* (\mu - \omega) \bigr\|_2^2
  \;=\;
  (2\omega)^{-d}
  \int_{\R^d}
  |\hat{h}(\xi)|^2\,
  \bigl|\hat{\mu}(\xi) - \hat{\omega}(\xi)\bigr|^2
  \, d\xi,
\]
which is precisely the structure of the spectral representation
\[
  \widehat{\mathcal{E}}[\mu]
  \;=\;
  \kappa_{d,q}^{-1}
  \int_{\R^d}
  |\xi|^{-d-q}\,
  \bigl|\hat{\mu}(\xi) - \hat{\omega}(\xi)\bigr|^2
  \, d\xi
\]
derived by Fornasier--H\"utter~\cite{fornasier2014consistency}
(corresponding to the choice $|\hat{h}(\xi)|^2 \propto |\xi|^{-d-q}$),
and identified as the energy distance / MMD with a Riesz kernel by
Sejdinovic et al.~\cite{sejdinovic2013equivalence} and
Sz\'ekely--Rizzo~\cite{szekely2013energy}.
\\

To complement this strand of work, the papers \cite{fornasier2013consistency, difrancesco2014asymptotic} addressed the Wasserstein gradient flow of MMD and energy distances for $\omega$ fixed. In particular \cite{fornasier2013consistency} did consider the gradient flow of MMD with sufficiently smooth kernels, while \cite{difrancesco2014asymptotic} did address the one of energy distances on measures on the one dimensional space. 
On the more applied side, Hertrich, Wald, Altekr\"uger and Hagemann~\cite{hertrich2024generative}
use Wasserstein gradient flows of the Riesz-kernel MMD with the same exponent range
$q\in(0,2)$ to build generative models, approximating a fixed target $\omega$ by the
evolution of $N$ interacting particles. The accuracy of such schemes is governed precisely
by how well an optimized $N$-point empirical measure approximates $\omega$ in $\mathcal{E}_q$,
so the sharp quantization rate of Theorem~\ref{t:main-intro} would provide a quantitative
foundation for the particle counts these methods require. Notably, their analysis relies on
the dimension-free i.i.d. MMD sample complexity rate $N^{-1/2}$ in~\eqref{eq:stat}, which our result shows can be
strictly improved by optimal quantization.
Building on the one-dimensional analysis, Duong, Stein, Beinert, Hertrich and
Steidl~\cite{duong2026wasserstein} complemented the results of \cite{difrancesco2014asymptotic} by characterizing the flow of the negative-distance-kernel MMD
($q=1$) towards a fixed target $\nu$ through an associated Cauchy problem on quantile functions,
from which they derive explicit solution formulas together with invariance and smoothing
properties.
These results have been recently greatly generalized on the $d$-dimensional torus $\mathbb T^d$  by the paper~\cite{chizat2026quantitative}, which studied the well-posedness and long-time
behaviour of the Wasserstein gradient flow of the Kernel Mean Discrepancy (KMD,
also known as MMD) functional. The model case considered is the squared homogeneous Sobolev distance
\[
  \mathcal{E}_s^2(\mu,\omega)
  \;=\;
  \frac{1}{2}\,\|\mu - \omega\|_{{H}_0^{-s}(\mathbb T^d)}^2,
  \qquad s \geq 1,
\]
 with $\omega$ a fixed target probability measure.
This functional corresponds to a pairwise Riesz kernel interaction with exponent
$q = 2s - d$, and its Wasserstein gradient flow is the continuity equation
\[
  \partial_t \mu_t
  \;+\;
  \nabla \cdot \bigl(\mu_t \, v_t\bigr) = 0,
  \qquad
  v_t = -\nabla \frac{\delta \mathcal{E}_s^2(\cdot,\omega)}{\delta \mu}[\mu_t],
\]
where the velocity field $v_t$ is determined by the first variation of
$\mathcal{E}_s^2(\cdot,\omega)$.
 
The paper establishes three main results.
First, inspired by Yudovich's theory for the two-dimensional Euler equation,
the authors prove global existence and uniqueness of the flow in natural weak
regularity classes, without assuming absolute continuity of the initial measure.
Second, in the critical case $s = 1$ (corresponding to the Coulomb or Newtonian
kernel), they show that the flow converges \emph{globally} to $\omega$ at an
\emph{exponential} rate under minimal assumptions on the data.
Third, for $s > 1$, they establish \emph{local} convergence at explicit
\emph{polynomial} rates of the form $O(t^{-\alpha})$, where the exponent $\alpha$
depends explicitly on $s$ and on the Sobolev regularity of both $\mu_0$ and $\nu$.
These rates hold both at the energy level and in higher Sobolev regularity classes,
and are shown to be sharp when $\nu$ is the uniform measure.
Crucially, except for the special case $s = 1$, even non-quantitative convergence
had been an open question in all these settings prior to this work.
 
As a notable application, the authors consider the gradient flow of the population
loss for shallow neural networks with ReLU activation in the infinite-width and
continuous-time limit. This dynamic can be recast as a Wasserstein--Fisher--Rao
gradient flow on the space of nonneg\-ative measures on the sphere $\mathbb{S}^d$,
and exploiting a correspondence with the Sobolev energy case with $s = (d+3)/2$,
the authors derive an explicit polynomial local convergence rate for this training
dynamics.
 
The paper also includes numerical experiments in dimension $d = 1$ using both PDE
(finite-volume) and particle discretizations, which illustrate and validate the
theoretical convergence analysis.
 
In the broader context of the energy distance literature, this work can be seen as
the most quantitative and general convergence result available for the Wasserstein
gradient flow of the energy
\[
  \mathcal{E}_q^2(\mu, \omega)
  \;=\;
  -\frac{1}{2}
  \iint_{\mathbb T^d \times \mathbb T^d}
  |x - y|^q \, d(\mu - \omega)(x)\, d(\mu - \omega)(y),
\]
whose Fourier representation $\mathcal{E}_q^2(\mu,\omega)$ is precisely
$\|\mu - \omega\|_{H_0^{-s}(\mathbb T^d)}^2$ for $q = 2s - d$. The gradient flow of this
functional governs both the long-time asymptotics of interacting particle systems
used in image halftoning~\cite{schmaltz2010electrostatic,teuber2011dithering} and
the training dynamics of overparameterized neural networks, thus unifying several
streams of research through a single analytical framework.\\

We conclude this historical account with the recent paper \cite{auricchio2026kinetic}, which is a comparative review to trace three families of divergences, namely Wasserstein distances, Fourier-based metrics, and energy distances, back to their origins in the kinetic theory of rarefied gases (Boltzmann equation, convergence to Maxwellian equilibrium). It argues that the same analytical tools developed to measure how fast a gas relaxes to equilibrium naturally give rise to the divergences most used in machine learning, and it establishes their mutual equivalence in the high-dimensional setting. 

\subsection{Comparison of energy distances and Wasserstein distances in terms of computational cost}\label{sec:compENvsWas}
 Given two empirical measures $\mu_N = \frac1N\sum_{i=1}^N \delta_{x_i}$ and
$\omega_M = \frac{1}{M}\sum_{j=1}^M \delta_{y_j}$, computing the exact $p$-Wasserstein
distance $W_p(\mu_N, \omega_M)$ requires solving a linear program with $N \times M$
variables and $N + M$ equality constraints. The exact cost is $O(N^3 \log N)$ using
the Hungarian or auction algorithm for the balanced case $N = M$, and more generally
$O(N^2 M + N M^2)$ for the unbalanced case. This is prohibitively expensive for
large $N$.\\
When it comes to statistical convergence, the dimension $d$ enters in a subtle but
devastating way. For a fixed measure $\omega$ with enough moments bounded, the expected Wasserstein error between the empirical
measure $\mu_N$ built from $N$ i.i.d.\ samples from $\omega$ is well-known to be
\[
  \E[W_p^p(\mu_N, \omega)] \;\asymp\; N^{-p/d},
  \qquad d > 2 p,
\]
see 
Dereich-Scheutzow-Schottstedt \cite{dereich} and
Fournier-Guillin \cite{fournier2015rate} for a precise statement. This means that to achieve a fixed approximation error $\varepsilon$ one
needs $N \asymp \varepsilon^{-d}$ samples, an exponential dependence on $d$.
In one dimension the Wasserstein distance admits the closed-form representation given by the Dall'Aglio formula \cite{dallaglio}
\[
W_p^p(\mu, \nu)
\;=\;
  \int_0^1 \bigl|F_\mu^{-1}(t) - F_\nu^{-1}(t)\bigr|^p \, dt,
\]
where $F_\mu^{-1}$ and $F_\nu^{-1}$ are the quantile functions, and the empirical
version can be computed in $O(N \log N)$ by sorting. In higher dimensions no such
closed form exists.
Several approximations reduce the cost.
The \emph{Sinkhorn algorithm} \cite{cuturi2013sinkhorn,peyre2019computational} regularizes the transport problem with
an entropic penalty $\varepsilon H(\mu)$, reducing the cost to $O(N^2)$ per
iteration with typically $O(N^2 / \varepsilon^2)$ overall complexity, but the approximation error and numerical stability degrade as
$\varepsilon \to 0$. The \emph{sliced Wasserstein distance} approximates $W_2$ by
averaging one-dimensional Wasserstein distances over $L$ random projections, at
cost $O(LN\log N)$, but introduces a bias that grows with $d$. Multiscale and
hierarchical methods can achieve near-linear complexity in favorable geometric
situations, but their guarantees remain dimension-dependent.
For general probability measures $\mu$ and $\omega$ on $\R^d$, computing $W_p(\mu,\omega)$
analytically is possible only in special cases: one-dimensional distributions (via
quantile functions), Gaussian measures in any dimension (via the Bures--Wasserstein
metric). In general, the computation requires solving
an infinite-dimensional linear program (Kantorovich's formulation), which has no
closed-form solution and must be approximated numerically by discretizing the measures
and solving the resulting finite linear program, thus reducing to the empirical case
with all its associated costs.
\\ 

Now, given two empirical measures $\mu_N = \frac1N\sum_{i=1}^N \delta_{x_i}$ and
$\omega_M = \frac{1}{M}\sum_{j=1}^M \delta_{y_j}$, the squared energy distance is
\[
  \mathcal E_q^2(\mu_N, \omega_M)
  \;=\;
  \frac{1}{NM}\sum_{i,j} |x_i - y_j|^q
  \;-\;
  \frac{1}{2 N^2}\sum_{i,j} |x_i - x_j|^q
  \;-\;
  \frac{1}{2 M^2}\sum_{i,j} |y_i - y_j|^q.
\]
The naive cost of evaluating each double sum is $O(N^2)$ (for $N = M$), making the
total cost $O(dN^2)$, already substantially cheaper than exact Wasserstein computation.
Moreover, since the kernel $|x-y|^q$ is radially symmetric, the
\emph{non-equispaced fast Fourier transform} (NFFT) can reduce the cost to
$O(dN \log N)$, as exploited by Teuber et al.~\cite{teuber2011dithering} and
Chizat et al.~\cite{chizat2026quantitative}.
\\ 
The statistical convergence
rate of the empirical energy distance is dramatically better than that of the
Wasserstein distance. In this paper we  show that for suitable sampling by constructing an equimass partition of $\mathrm{spt}\,\omega \Subset \R^d$ and sampling one point per cell
\[
  \E\bigl[\mathcal E_q^2(\mu_N, \omega)\bigr] \asymp  N^{-(1+\frac{q}{\expAhlfors})}
\]
holds  without strong dependence on the ambient dimension $d$. Moreover, if $\mu_N$ is generated from $N$ i.i.d. samples of $\omega$, a direct computation yields
\begin{equation}\label{eq:stat}
    \E\bigl[\mathcal E_q^2(\mu_N, \omega)\bigr] = \frac{1}{2 N} \E\bigl[ |X- X'|^q] \asymp N^{-1},
\end{equation}
as long as $\int_{\R^d} |x|^q d\omega(x) < \infty$, see also \cite{szekely2013energy}\footnote{The same computation gives $\mathbb{E}\bigl[\mathcal{E}_q^2(\mu_N,\nu_M)\bigr] = \frac{1}{2}\left(\frac1N+\frac{1}{M}\right)\mathbb{E}_{X,Y\overset{\text{i.i.d.}}{\sim}\omega}\bigl[|X-Y|^q\bigr]$ as soon both $\mu_N$ and $\nu_M$ are generated  with $N$ and $M$ i.i.d. samples of the same $\omega \in \mathcal P_q(\R^d)$ (matching problem). }. This \emph{dimension-free} statistical
convergence is one of the principal practical advantages of energy distances over
Wasserstein distances in high-dimensional settings, and it is a key reason why
MMD-based methods have become widespread in machine learning.
In particular, the $N^{-1/2}$ convergence rate of the empirical MMD, combined with the $O(N^2)$ cost 
of its U-statistic estimator (cf. \cite[Lemma~6 and Eq.~(5)]{gretton2012kernel}), 
provides the analytical foundation for the kernel two-sample tests of 
Gretton et al.~\cite{gretton2012kernel,gretton2006kernel}. This framework, and its subsequent refinements, have found extensive 
application in nonparametric hypothesis testing and distribution comparison, 
particularly in high-dimensional regimes where the curse of dimensionality 
renders the empirical Wasserstein distance statistically inefficient
\cite{cheng2024kernel, yan2023kernel}.
\\

We further recall that for general probability measures $\mu$ and $\omega$, the squared energy distance
admits the closed-form Fourier representation \eqref{eq:equiv}.
This means that for measures with known or computable Fourier
transforms (e.g.\ Gaussians, stable distributions, mixtures), the energy distance
can be evaluated analytically or semi-analytically without any optimization
whatsoever, in stark contrast to the Wasserstein distance. For Gaussian measures
$\mu = \mathcal{N}(m_1, \Sigma_1)$ and $\omega = \mathcal{N}(m_2, \Sigma_2)$, the
energy distance has a closed-form expression via the Fourier formula, up to eventually approximating the integral.
\\
 
The following table summarizes the comparison.
{ \footnotesize
\medskip
\begin{center}
\begin{tabular}{lcc}
\toprule
 & Wasserstein $W_p$ & Energy distance $\mathcal E_q$ \\
\midrule
Empirical, exact cost          & $O(N^3 \log N)$       & $O(dN^2)$               \\
Empirical, fast approximation  & $O(N^2)$ (Sinkhorn)   & $O(dN\log N)$ (NFFT)    \\
Statistical rate (empirical)   & $N^{-p/d}$ (curse of dim.) & $N^{-1/2}$ (dim.-free) \\
Non-empirical, general         & Infinite-dim.\ LP, no closed form
                                                        & Closed form via Fourier \\
Non-empirical, Gaussian        & Bures metric 
                                                        & Closed form via Fourier \\
Sensitivity to $d$             & Exponential (statistics) & Polynomial (computation) \\
\bottomrule
\end{tabular}
\end{center}
}
\medskip

 The relationship between Wasserstein distances and Fourier-based discrepancies has
also been investigated from a different viewpoint in the kinetic and imaging
literature. In particular, in
\cite{Auricchio2020} the authors study suitable Fourier-based
probability metrics, originally motivated by kinetic theory, and prove quantitative
equivalence estimates with $W_1$ and $W_2$ in a discrete setting, with explicit
constants. Their results provide further evidence that Fourier-based discrepancies
can be much more convenient computationally while retaining a precise relation with
transport distances. More broadly, the recent review
\cite{auricchio2026kinetic} discusses Wasserstein distances, Fourier-based metrics
and energy distances as part of a common family of divergences arising naturally
from kinetic theory.

The comparison proved in Section~\ref{sec:W1} is complementary to this line of
work. It is tailored specifically to the power-kernel MMD considered here and shows
that, for compactly supported probability measures,
$\mathcal E_q^2(\mu,\nu)\leq W_1^q(\mu,\nu)$, up to the normalization convention
fixed in Theorem~\ref{t:equivalenza}. We also characterize the equality cases
exactly: equality holds only in the trivial case $\mu=\nu$ or when both measures are
Dirac masses.

The fundamental conclusion is that energy distances are computationally and
statistically far more tractable than Wasserstein distances in high dimensions,
at the cost of being a slightly weaker metric. 
The topological relationships between $\mathcal{E}_q$ and Wasserstein distances 
are investigated in Modeste and Dombry~\cite[Theorem 13]{modeste2024}, who prove that, for 
every $q\in(0,2)$, convergence in $\mathcal{E}_q$ is strictly sandwiched between 
$W_{q/2}$-convergence (stronger) and $W_r$-convergence for all $r<q/2$ 
(weaker). In particular, $\mathcal{E}_q$ is a strictly weaker notion of 
convergence than $W_{q/2}$. 
In Section~\ref{sec:W1} we sharpen this picture by establishing 
the explicit quantitative bound 
$\mathcal{E}_q^2(\mu,\nu)\leq W_1^q(\mu,\nu)$ for compactly 
supported measures, with a complete characterization of the equality cases (cf. Theorem~\ref{thm:W1}). 
Since $W_{q/2}\leq W_1^{q/2}$ by Jensen's inequality, our estimate is 
controlled by a Wasserstein distance of order $1$ rather than $q/2$, providing 
a strictly stronger bound than the one implicit in \cite[Theorem~13]{modeste2024}.
In the converse direction, our lower 
bound in Theorem~\ref{t:main-intro}\,(i) shows that no dimension-free Hölder 
estimate of the form $W_1(\mu_N,\omega)\leq C\,\mathcal{E}_q(\mu_N,\omega)^\theta$ 
with $\theta$ independent of $\beta$ can hold uniformly over $N$-point empirical 
configurations, since $\mathcal{E}_q(\mu_N,\omega)$ and $W_1(\mu_N,\omega)$ 
decay at intrinsic rates that depend on the dimension $\expAhlfors$ of 
$\mathrm{spt}\,\omega$. This is consistent with the dimension-dependent 
exponents appearing in \cite[Propositions~16 and 17]{modeste2024}.
For completeness, we include the following 
self-contained characterization of convergence in $\mathcal{E}_q$ in terms of weak convergence and uniform 
$q$-moment boundedness.
\begin{proposition}\label{prop:edtop}
Assume $q \in [1,2)$, $(\mu_n)_{n \in \N} \subset \mathcal P_q(\R^d)$, and $\omega \in \mathcal P_q(\R^d)$. The following statements hold:
\begin{itemize}
    \item[(i)] $\lim_n \mathcal E_q(\mu_n,\omega)= 0$  implies $w-\lim_n \mu_n= \omega$ and $\lim_n \int_{\R^d} |x|^r $ $ d\mu_n(x)$ $= \int_{\R^d} |x|^r d\omega(x)$ for any $r \in (0,q/2)$;
    \item[(ii)]  $w-\lim_n \mu_n= \omega$ and $\sup_n \int_{\R^d} |x|^q d\mu_n(x) < \infty$ implies $\lim_n \mathcal E_q(\mu_n,\omega)= 0$.
\end{itemize}
\end{proposition}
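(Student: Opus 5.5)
The plan is to work throughout with the Fourier representation \eqref{eq:equiv}:
\[
\mathcal E_q^2(\mu,\omega)=\kappa_{d,q}^{-1}\int_{\R^d}|\hat\mu(\xi)-\hat\omega(\xi)|^2|\xi|^{-(d+q)}\,d\xi ,
\]
which is valid for probability measures with finite $q$-moments. The two parts of the statement are then handled separately.

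For (ii), assume $\mu_n\rightharpoonup\omega$ and $M:=\sup_n\int|x|^q\,d\mu_n<\infty$. Weak convergence gives pointwise convergence $\hat\mu_n(\xi)\to\hat\omega(\xi)$ for every $\xi$. To pass to the limit we split the integral into $|\xi|\le\delta$, $\delta<|\xi|<R$, and $|\xi|\ge R$.
\begin{itemize}
\item On $|\xi|\ge R$ we use $|\hat\mu_n-\hat\omega|\le 2$, so this piece is at most $4\int_{|\xi|\ge R}|\xi|^{-d-q}\,d\xi=O(R^{-q})$, uniformly in $n$.
\item On the annulus $\delta<|\xi|<R$ the weight is bounded, so dominated convergence shows this piece tends to $0$.
\item The low-frequency piece is the delicate one. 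We use the elementary bound $|e^{-ix\cdot\xi}-1|\le 2^{1-\theta}|x|^\theta|\xi|^\theta$ for $\theta\in(0,1]$. For $1<q<2$ we must also exploit the cancellation of the first moments. Writing $m_n=\int x\,d\mu_n$ and $m=\int x\,d\omega$, we have
\[
|\hat\mu_n(\xi)-\hat\omega(\xi)|\le |\xi|\,|m_n-m|+C|\xi|^{q'}\Bigl(\int|x|^{q'}d\mu_n+\int|x|^{q'}d\omega\Bigr),
\]
using $|e^{-it}-1+it|\le C|t|^{q'}$ for $q'\in[1,2]$. Taking $q'$ with $q/2<q'\le q$ makes $|\xi|^{2q'-d-q}$ integrable near $0$. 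The mean term gives $|m_n-m|^2\int_{|\xi|\le\delta}|\xi|^{2-d-q}\,d\xi$, which is finite because $q<2$; moreover $m_n\to m$, since first moments converge when $q$-moments with $q\ge1$ are uniformly bounded (uniform integrability; for $q=1$ one uses $q'=1$ directly). Hence the low-frequency piece is $O(\delta^{2q'-q})$ uniformly in $n$.
\end{itemize}
Letting $n\to\infty$, then $\delta\to0$ and $R\to\infty$, yields (ii). Where a slightly stronger uniform integrability than $\sup_n\int|x|^q\,d\mu_n<\infty$ seems needed, I will instead estimate directly from the kernel form: the truncated part of $|x-y|^q$ is a bounded continuous kernel, so its contribution passes to the limit by weak convergence of $\mu_n\otimes\mu_n$ and $\mu_n\otimes\omega$. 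Its tail I control by $\sup_n\int|x|^q\,d\mu_n$, accepting if necessary a uniform-integrability argument (this borderline is the main obstacle in (ii)).

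For (i), assume $\mathcal E_q(\mu_n,\omega)\to0$. We first show that $\hat\mu_n\to\hat\omega$ locally uniformly away from the origin in $L^2$. Indeed, for every ball $B$ avoiding $0$ the weight is bounded below on $B$, so $\int_B|\hat\mu_n-\hat\omega|^2\to0$. The characteristic functions are bounded by $1$, so this gives convergence of $\hat\mu_n$ to $\hat\omega$ in $L^1_{loc}(\R^d\setminus\{0\})$.

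To upgrade this to weak convergence we need tightness. Take a Gaussian mollification $\phi_\varepsilon$; then $\int\phi_\varepsilon\ast(\mu_n-\omega)\,g\,dx$ converges for Schwartz test functions $g$, since the Fourier pairing concentrates on frequencies of order $\varepsilon^{-1}$ and the neighborhood of $\xi=0$ has small measure. This yields vague convergence. Tightness then follows from Lévy-type arguments: for small $|\xi|$, $\hat\mu_n$ is close to $\hat\omega$ on average over annuli $\{\delta<|\xi|<2\delta\}$, and $\hat\omega$ is continuous with $\hat\omega(0)=1$. The quantity $\int_{|\xi|<\delta}(1-\mathrm{Re}\,\hat\mu_n)\,d\xi$, which controls $\mu_n(|x|>1/\delta)$, is bounded by this annular average plus a small error, using a Lévy inequality averaged over shells. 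With tightness and vague convergence we conclude $\mu_n\rightharpoonup\omega$.

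For the moments with $r<q/2$ it suffices to show $\sup_n\int|x|^{q/2}\,d\mu_n<\infty$, and then invoke uniform integrability of $|x|^r$. For the uniform bound, use
\[
\int|x-y|^{q}\,d\mu_n(x)\,d\mu_n(y)\le 2\int|x-y|^q\,d\mu_n\,d\omega-\int|x-y|^q\,d\omega\,d\omega+2\mathcal E_q^2 .
\]
This is obtained from the expansion of \eqref{eq1} together with the triangle-type inequality for the metric $\mathcal E_q$: the cross term is bounded via $\sqrt{\mathcal E_q}$-type estimates, giving a $W_{q/2}$-type control as in \cite[Theorem 13]{modeste2024}. I expect this last step to be the main obstacle for (i): extracting a uniform moment bound from $\mathcal E_q$ alone. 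My first attempt will be to test the Fourier side against $|x|^{q/2}$ regularized, whose distributional Fourier transform is $c|\xi|^{-d-q/2}$, and to apply Cauchy–Schwarz in the weighted $L^2$ space with weight $|\xi|^{-d-q}$.
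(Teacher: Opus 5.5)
\textbf{Part (ii) and weak convergence.} Your part (ii) is correct and is essentially the paper's three-region Fourier split. The mean-cancellation expansion at low frequencies is unnecessary. The bound $|e^{-it}-1|\le|t|$, together with $\sup_n\int|x|\,d\mu_n<\infty$ (which follows from $q\ge1$), already gives the majorant $|\xi|^{2-d-q}$, integrable at $0$ because $q<2$. So no extra uniform integrability is needed. For $q=1$ you cannot assert $m_n\to m$ anyway, only that $m_n$ stays bounded, and boundedness is all your argument uses. The weak-convergence half of (i), via a shell-averaged L\'evy inequality, is also sound. It is in fact more careful than the paper's appeal to equicontinuity of $\hat\mu_n$.

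\textbf{The gap: the moment part of (i).} You reduce it to $\sup_n\int|x|^{q/2}\,d\mu_n<\infty$, but this is false under the hypotheses. Here is a counterexample in $d=1$.
\begin{itemize}
\item Let $\omega=\delta_0$, $\sigma_R:=\frac12(\delta_R+\delta_{-R})$, $p_k:=c\,2^{-kq/2}k^{-1}$, and $\nu^{(n)}:=\sum_{k\le n}p_k\sigma_{2^k}+\bigl(1-\sum_{k\le n}p_k\bigr)\delta_0$.
\item For $2^{-j-1}\le|\xi|\le 2^{-j}$, uniformly in $n$, one has $0\le 1-\hat\nu^{(n)}(\xi)=\sum_{k\le n}p_k(1-\cos(2^k\xi))\lesssim 2^{-jq/2}j^{-1}$.
\item Hence $\mathcal E_q(\nu^{(n)},\delta_0)\le A$ with $A^2\lesssim 1+\sum_j j^{-2}$, whereas $M_n:=\int|x|^{q/2}\,d\nu^{(n)}=c\sum_{k\le n}k^{-1}\to\infty$.
\item Set $\mu_n:=(1-M_n^{-1/2})\delta_0+M_n^{-1/2}\nu^{(n)}$. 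Then $\mathcal E_q(\mu_n,\delta_0)\le A\,M_n^{-1/2}\to 0$, but $\int|x|^{q/2}\,d\mu_n=M_n^{1/2}\to\infty$.
\end{itemize}

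Consistently with this, both of your proposed tools fail.
\begin{itemize}
\item The displayed kernel inequality is just the expansion of \eqref{eq1}. It bounds $\iint|x-y|^q\,d\mu_n\,d\mu_n$ by the cross term $\iint|x-y|^q\,d\mu_n\,d\omega$, which grows in the same way, so the argument is circular.
\item Cauchy--Schwarz of $c|\xi|^{-d-q/2}$ against the weight $|\xi|^{-d-q}$ leaves $\int|\xi|^{-d}\,d\xi$. This diverges logarithmically at the origin, which is exactly the borderline the example exploits.
\end{itemize}

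\textbf{What holds, and how to get it.} The true statement is $\sup_n\int|x|^{r'}\,d\mu_n<\infty$ for every $r'<q/2$ strictly; the paper obtains it by quoting \cite[Theorem 4.1]{fornasier2014consistency}. This suffices, because each $r<q/2$ lies below some such $r'$. You can derive it from your own L\'evy inequality made quantitative:
\begin{itemize}
\item $\mu_n(|x|\ge 1/\delta)\lesssim \delta^{-d}\int_{B_\delta}(1-\mathfrak{Re}\,\hat\mu_n)\,d\xi$;
\item $\delta^{-d}\int_{B_\delta}(1-\mathfrak{Re}\,\hat\omega)\,d\xi\lesssim \delta^{q}\int|x|^q\,d\omega$;
\item by Cauchy--Schwarz with the weight on $B_\delta$, $\delta^{-d}\int_{B_\delta}|\hat\mu_n-\hat\omega|\,d\xi\lesssim \mathcal E_q(\mu_n,\omega)\,\delta^{q/2}$.
\end{itemize}
Together these give $\sup_n\mu_n(|x|\ge t)\lesssim t^{-q/2}$ for $t\ge 1$. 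That single bound yields tightness and uniform integrability of $|x|^r$ for all $r<q/2$ at once. Convergence of the $r$-moments then follows from weak convergence.
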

\begin{proof}
    Assume that $\mathcal E_q(\mu_n,\omega) \to 0$ as $n\to \infty$. Then by the Fourier equivalence \eqref{eq:equiv} we have that $\hat \mu_n(\xi)$ converges to $\hat \omega(\xi)$, for a.e. $\xi \in \mathbb R^d$. The equicontinuity of $\hat \mu_n(\xi)$ implies convergence for all $\xi \in \mathbb R^d$. Hence, by Levy continuity theorem \cite[Theorems 14.15 and 18.21]{fristedt1997} we obtain $\mu_n \rightharpoonup \mu$. Moreover  $\mathcal E_q^2(\mu_n,\omega) \to 0$ implies $\sup_n \mathcal E_q^2(\mu_n,\omega) \leq  M < \infty$ and by \cite[Theorem 4.1]{fornasier2014consistency} there exists $M'<\infty$ such that $\sup_n \int_{\R^d} |x|^r d\mu_n(x) \leq M'$ for any $r \in (0,q/2)$. Hence,  \cite[Lemma 5.1.7]{ambrosio2008gradient} yields $\lim_n \int_{\R^d} |x|^r d\mu_n(x)= \int_{\R^d} |x|^r d\omega(x)$ for any $r \in (0,q/2)$. In particular,  $\lim_n W_r(\mu_n,\omega)=0$ for any $r \in (0,q/2)$.\\
    
    Now, assume that $\mu_n \rightharpoonup \omega$ and $\sup_n \int_{\R^d} |x|^q d\mu_n(x) \leq M <\infty$. We would like to show that $\mathcal E_q^2(\mu_n,\omega) \to 0$ as $n\to \infty$. We use again \eqref{eq:equiv} and split the integration as
    \begin{eqnarray*}
     && \int_{\R^d}
  |\hat{\mu}_n(\xi) - \hat{\omega}(\xi)|^2
  \,|\xi|^{-(d+q)}\, d\xi \\
  &=& \underbrace{\int_{|\xi|\leq R}
  |\hat{\mu}_n(\xi) - \hat{\omega}(\xi)|^2
  \,|\xi|^{-(d+q)}\, d\xi}_{A_n(R)} \\
  &+&  \underbrace{\int_{|\xi|> R}
  |\hat{\mu}_n(\xi) - \hat{\omega}(\xi)|^2
  \,|\xi|^{-(d+q)}\, d\xi}_{B_n(R)}.   
    \end{eqnarray*}
    For the second integral we proceed as follows: first of all notice that
    $|\hat\mu(\xi) - 1|^2 = |\hat\mu(\xi)|^2 - 2\,\mathfrak{Re}\,\hat\mu(\xi) + 1$ and from $|\hat\mu(\xi)| \leq 1$ we obtain
    $ |\hat\mu(\xi) - 1|^2 \leq 2(1-\mathfrak{Re}(\hat \mu(\xi)))$. Hence,
    \begin{eqnarray*}
    |\hat{\mu}(\xi) - \hat{\omega}(\xi)|^2 &\lesssim& |\hat{\mu}(\xi) -1 |^2 + |\hat{\omega}(\xi) -1 |^2 \\
    &\lesssim& 2 (1 - \mathfrak{Re}(\hat \mu_n(\xi)) + 1 - \mathfrak{Re}(\hat \omega(\xi))).
    \end{eqnarray*}
    Using the real part of the complex exponential in the Fourier transforms we obtain
    \begin{eqnarray*}
B_n(R) \lesssim \int_{\R^d} \int_{|\xi|> R} \frac{1-\cos(\xi \cdot x)}{|\xi|^{d+q}} d\xi d \mu_n(x) + \int_{\R^d} \int_{|\xi|> R} \frac{1-\cos(\xi \cdot x)}{|\xi|^{d+q}} d\xi d \omega(x).
     \end{eqnarray*}
     We estimate both these integrals in the same manner and we do it exemplarily on first one, by using $1- \cos(t) \leq \min(2,t^2/2)$ and $\min(a,b) \leq a^\theta b^{1-\theta}$
      \begin{eqnarray*}
      && \int_{\R^d} \int_{|\xi|> R} \frac{1-\cos(\xi \cdot x)}{|\xi|^{d+q}} d\xi d \mu_n(x) \\
      &\leq& \int_{\R^d} \int_{|\xi|> R} \frac{\min(2,|\xi|^2|x|^2/2)}{|\xi|^{d+q}} d\xi d \mu_n(x) \\
      &\leq &
      \int_{\R^d} \int_{|\xi|> R} \frac{2^{1-2 \theta}|\xi|^{2\theta} |x|^{2\theta}}{|\xi|^{d+q}} d\xi d \mu_n(x)
       \end{eqnarray*}
       If we choose now $\theta=q/2- \delta$ for $\delta>0$ small and we use spherical coordinates we can further bound the integral as
  \begin{eqnarray*}
      && \int_{\R^d} \int_{|\xi|> R} \frac{1-\cos(\xi \cdot x)}{|\xi|^{d+q}} d\xi d \mu_n(x) \\
&\lesssim&
      \int_{\R^d} \int_{r> R} \frac{r^{q- 2 \delta} |x|^{q-2\delta}}{r^{d+q}} r^{d-1} dr d \mu_n(x)\\
      &=&  \int_{r> R} r^{q-2 \delta} r^{-(d+q)} r^{d-1} dr \int_{\R^d} |x|^{q-2\delta}  d \mu_n(x) \\
      &=& \left( \int_{r> R}  r^{-1-2\delta }dr \right) \left (\int_{\R^d} |x|^{q}  d \mu_n(x) \right)^{(q-2\delta)/q} = \frac{R^{-2\delta}}{2 \delta} \left (\int_{\R^d} |x|^{q}  d \mu_n(x) \right)^{(q-2\delta)/q} \lesssim R^{-2\delta}.
  \end{eqnarray*}     
  Hence, by choosing $R>0$ large enough we can ensure that $B_n(R) \leq \varepsilon/2$, uniformly with respect to $n$. \\
  We turn now to the integral $A_n(R)$, which we split into two terms, one term around $0$ and another in an annulus:
    \begin{eqnarray*}
A_n(R) = \int_{\delta<|\xi|\leq R}
  |\hat{\mu}_n(\xi) - \hat{\omega}(\xi)|^2
  \,|\xi|^{-(d+q)}\, d\xi + \int_{|\xi|\leq \delta}
  |\hat{\mu}_n(\xi) - \hat{\omega}(\xi)|^2
  \,|\xi|^{-(d+q)}\, d\xi.
    \end{eqnarray*} 
    In the first integral  the term $|\xi|^{-(d+q)}$ is bounded on the annulus. As we assumed $\mu_n \rightharpoonup \omega$, we have $\hat \mu_n(\xi) \to \hat \omega(\xi)$ for a.e. $\xi$ and, by dominated convergence, the first integral on the annulus vanishes for $n\to \infty$. For the second integral near the origin, the bound $|e^{it}-1|\leq|t|$ gives
$|\hat\mu_n(\xi)-\hat\omega(\xi)|\leq|\xi|\bigl(\int|x|\,d\mu_n+\int|x|\,d\omega\bigr)$,
so $|\hat\mu_n(\xi)-\hat\omega(\xi)|^2 |\xi|^{-(d+q)}\lesssim|\xi|^{2-(d+q)}$, which is integrable near $0$
since $2-(d+q)>-d$ if and only if $q<2$. This implies
$$
\int_{|\xi|\leq \delta}
  |\hat{\mu}_n(\xi) - \hat{\omega}(\xi)|^2
  \,|\xi|^{-(d+q)}\, d\xi \lesssim \int_{r\leq \delta} r^{2-(d+q)} r^{d-1} dr =O(\delta^{2-q}),
$$
which is made smaller than $\varepsilon/4$ by choosing $\delta$ small.
Hence $A_n(R)<\varepsilon/2$ for all $n$ large enough. We conclude that $\mathcal E_q^2(\mu_n,\omega) \to 0$ as $n\to \infty$.
\end{proof}
\begin{remark}
By \cite[Proposition 7.1.5]{ambrosio2008gradient}, for any $r \geq 1$ it holds that $\lim_n W_r(\mu_n,\omega)=0$ is equivalent to  $\mu_n \rightharpoonup \omega$ for $n\to \infty$ and $\lim_n \int_{\R^d} |x|^r d\mu_n(x) = \int_{\R^d} |x|^r d\omega(x)$. In view of Proposition~\ref{prop:edtop}, convergence in energy distance $\mathcal E_q$ implies convergence in Wasserstein distance $W_r$ for $r \in (0,q/2)$. Conversely, convergence in Wasserstein distance $W_q$ implies convergence in distance $\mathcal E_q$. In Section \ref{sec:W1} we shall prove that 
$\mathcal E_q^2(\mu,\omega)\leq  W_1^{q}(\mu,\omega)\leq W_q^{q}(\mu,\omega)$, which shows that convergence in Wasserstein distance $W_1$ actually suffices to imply convergence in energy distance $\mathcal E_q$. This finer relationship between $\mathcal E_q^2$ and $W_1$ may be surprising as it does not emerge directly from the previous proof, where the uniform boundedness of the $q$-moments is used to estimate integral $B_n(R)$ with a rather natural scaling.  We conclude that energy distances and Wasserstein distance are topologically comparable, but their nature remains intrinsically different. Further comparisons between Wasserstein distances, Fourier-based metrics and energy
distances can be found in
\cite{Auricchio2020,auricchio2026kinetic}.
\end{remark}

\subsection{Discussion and organization of the paper}

Theorem~\ref{t:main-intro} sharpens the asymptotic result of Fornasier and 
H\"utter \cite{fornasier2014consistency} in two distinct ways. On one hand, 
it converts a qualitative narrow-convergence statement
$$
\mu_N^* = \arg\min_{\mu \in \mathcal P^N} \mathcal{E}_q(\mu,\omega) 
\rightharpoonup \omega, \quad N \to +\infty,
$$
into a precise quantitative rate $N^{-\frac{1}{2}(1+q/\expAhlfors)}$. On the 
other hand, by tracking the intrinsic dimension $\expAhlfors$ of the support, 
it explicitly captures the geometric structure of $\omega$.\footnote{We also 
note that the qualitative consistency result by $\Gamma$-convergence in 
\cite{fornasier2014consistency} can actually be obtained more easily as 
follows: in light of the statistical convergence \eqref{eq:stat}, by using 
Markov's inequality one can show that $\min_{\mu \in \mathcal P^N} 
\mathcal{E}^2_q(\mu,\omega) =O(N^{-1})$. Hence, an application of 
Proposition~\ref{prop:edtop}\,(i) implies more directly $\mu_N^* = 
\arg\min_{\mu \in \mathcal P^N} \mathcal{E}_q(\mu,\omega) \rightharpoonup 
\omega$ as $N \to +\infty$.} As pointed out at the beginning of this 
introduction, the rate beats $N^{-1/2}$, showing that the quantization 
minimizers strictly outperform i.i.d.\ Monte Carlo sampling. Along the way 
of proving Theorem~\ref{t:main-intro} we revisit Schoenberg's theory of 
embeddings of measures into homogeneous Sobolev spaces 
(Theorem~\ref{t:equivalenza}), and we establish sharp comparisons between 
the energy distance and the $1$-Wasserstein distance 
(Section~\ref{sec:W1}).

Let us also discuss the relation with the closely related work
\cite{ColzaniGigante}. There, the authors develop a
general theory of discrepancy and numerical integration on metric measure spaces
using stratified random sampling over equal-measure partitions. The probabilistic
idea behind our upper bound is similar in spirit: after decomposing the space into
cells of equal mass, one samples independently one point in each cell and estimates
the resulting error in terms of the cell diameters.

However, our result is not a direct application of that framework. In
\cite{ColzaniGigante}, the functional setting is
intrinsic to the underlying metric measure space: the quadrature error is tested
against potential spaces or Haj{\l}asz--Besov type spaces built from the metric,
the measure, and a suitable kernel. In the present paper, instead, the discrepancy is
fixed a priori by the ambient Euclidean power kernel $-|x-y|^q$, and is identified,
via Schoenberg's theorem, with an ambient negative Sobolev norm on $\mathbb R^d$.
Thus the function space naturally associated with the dual formulation is not
defined from $\omega$ or from the intrinsic geometry of $\mathrm{spt}\,\omega$.
Consequently, even though restrictions of smooth ambient functions to
$\mathrm{spt}\,\omega$ have intrinsic regularity, applying the quadrature
results of \cite{ColzaniGigante} would require a
nontrivial trace or identification theorem between the ambient Sobolev structure and
an intrinsic potential Besov space on $\textup{spt}\,\omega$. For arbitrary
Ahlfors regular supports, possibly fractal or highly irregular, this is not an
automatic matter.

The proof methods are also different. The estimates in
\cite{ColzaniGigante} rely essentially on the
Marcinkiewicz--Zygmund inequality, together with intrinsic kernel estimates. Our
upper bound is more direct in this specific MMD setting: we use the Hilbertian
structure provided by the Schoenberg embedding, so that independence and centering
make the cross terms vanish and the variance is controlled directly by the diameter
of each cell. The lower bound is even further from the quadrature approach. Their
lower estimates provide averaged sharpness results for the stratified quadrature
strategy, namely lower bounds on the mean, over the random choice of one point in
each cell, of the corresponding worst-case integration error. Our lower bound is
deterministic with respect to the empirical configuration: it holds for every choice
of $N$ points and follows from a duality argument in the ambient Sobolev norm,
using Ahlfors-adapted bump functions supported in empty balls. This yields the
matching rate for the MMD quantization problem.

\medskip
The paper is organized as follows. Section~\ref{sec:preliminaries} develops 
the necessary preliminaries on the Fourier transform of measures and the 
homogeneous Sobolev pre-Hilbert structure on $\mathcal{M}_0(\R^d)$, 
culminating in Theorem~\ref{t:equivalenza}, which identifies the energy 
distance with the ${H}^{-s}_0$-norm of the difference of measures. 
Section~\ref{sec:lower bound} establishes the lower bound 
\eqref{eq:Hdots-lower0} via a duality argument with adapted bump functions. 
Section~\ref{sec: upper bound} contains the upper bound 
\eqref{e:upper-bound-main}, proved by a probabilistic equimass-partition 
construction. Finally, Section~\ref{sec:W1} quantitatively establishes the sharp comparison between $\mathcal{E}_q$ and $W_1$.

\section{Preliminaries}\label{sec:preliminaries}
Here and in the following we assume $d\in \N$, $q\in (0,2)$, and set $s:=\frac{d+q}{2}$ unless stated otherwise. We denote by $\mathcal{M} (\mathbb{R}^d)$ the set of signed Radon measures on $\mathbb{R}^d$, and we let
\begin{equation*}
\calM_0(\R^d):=\bigl\{\nu\in\calM(\R^d):\ \nu(\R^d)=0 \textup{ and  $\mathrm{spt}\,\nu$ is compact} \bigr\}
\end{equation*}
be the set of centered signed measures.

\subsection{Fourier Transform}
Given a signed measure $\nu\in\calM(\R^d)$ we define its Fourier transform by
\begin{equation}\label{eq:FT-measure}
\Fhat{\nu}(\xi):=\int_{\R^d} e^{-i\xi\cdot x}\,\dd\nu(x),\qquad \xi\in\R^d.
\end{equation}

\begin{lemma}\label{l:FS-basic}
Let $\nu\in\calM(\R^d)$.
Then:
\begin{enumerate}[label=\arabic*.]
\item $\Fhat{\nu}$ is bounded and $\|\Fhat{\nu}\|_{L^\infty(\R^d)}\le \|\nu\|_{\mathrm{TV}}$;
\item $\Fhat{\nu}$ is uniformly continuous on $\R^d$;
\item $\Fhat{\nu}(0)=\nu(\R^d)$;
\item if $\nu(\R^d) = 0$ then
\begin{equation*}
    |\Fhat{\nu}(\xi)|\le |\xi|\int_{\R^d}|x|\,\dd|\nu|(x).
\end{equation*}
\end{enumerate}
\end{lemma}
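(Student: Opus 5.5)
All four items follow from the definition \eqref{eq:FT-measure} and elementary properties of the integrand $x\mapsto e^{-i\xi\cdot x}$. We write $\nu=\nu^+-\nu^-$ (Jordan decomposition) and integrate against the finite positive measure $|\nu|=\nu^++\nu^-$. Here we assume, as in the setting of $\mathcal M_0(\R^d)$, that $\|\nu\|_{\mathrm{TV}}=|\nu|(\R^d)<\infty$, so that the integral defining $\Fhat\nu$ converges absolutely.

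\emph{Item 1.} Since $|e^{-i\xi\cdot x}|=1$, we have $|\Fhat\nu(\xi)|\le\int_{\R^d}|e^{-i\xi\cdot x}|\,\dd|\nu|(x)=\|\nu\|_{\mathrm{TV}}$ for every $\xi$.

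\emph{Item 3.} Evaluating at $\xi=0$ gives $\Fhat\nu(0)=\int_{\R^d}1\,\dd\nu=\nu(\R^d)$.

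\emph{Item 2.} For $\xi,h\in\R^d$ we have
\[
|\Fhat\nu(\xi+h)-\Fhat\nu(\xi)|\le\int_{\R^d}|e^{-ih\cdot x}-1|\,\dd|\nu|(x)=:\varphi(h),
\]
and this bound does not depend on $\xi$. The integrand is bounded by $2$, which is $|\nu|$-integrable, and it tends to $0$ pointwise as $h\to0$. Dominated convergence therefore gives $\varphi(h)\to0$ as $h\to0$, which is uniform continuity.

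\emph{Item 4.} If $\nu(\R^d)=0$, we subtract the zero quantity $\int_{\R^d}1\,\dd\nu$:
\[
\Fhat\nu(\xi)=\int_{\R^d}\bigl(e^{-i\xi\cdot x}-1\bigr)\,\dd\nu(x).
\]
We then use the elementary bound $|e^{-it}-1|=2|\sin(t/2)|\le|t|$ together with Cauchy--Schwarz, $|\xi\cdot x|\le|\xi||x|$. This yields
\[
|\Fhat\nu(\xi)|\le|\xi|\int_{\R^d}|x|\,\dd|\nu|(x).
\]
The bound is trivially true if the first moment is infinite. For $\nu\in\mathcal M_0(\R^d)$ the moment is finite because the support is compact.

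None of the steps is a real obstacle. The only point needing care is item 2, which requires finiteness of $|\nu|$ so that dominated convergence applies, rather than mere local finiteness of a Radon measure.
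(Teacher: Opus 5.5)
Your proof is correct and follows the paper's argument item by item: the bound $|e^{-i\xi\cdot x}|=1$ for item 1, the $\xi$-independent modulus $\int|e^{-ih\cdot x}-1|\,\dd|\nu|(x)$ with dominated convergence for item 2, evaluation at $\xi=0$ for item 3, and subtracting $\nu(\R^d)=0$ together with $|e^{-it}-1|\le|t|$ and Cauchy--Schwarz for item 4. Your explicit remark that $|\nu|(\R^d)<\infty$ is needed is a fair point; the paper uses it only implicitly, and appeals to the finiteness of $|\nu|$ again in Lemma~\ref{l:pairing-formula}.
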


\begin{proof}
1. For any $\xi\in\R^d$,
\begin{equation*}
|\Fhat{\nu}(\xi)|
=\left|\int_{\R^d} e^{-i\xi\cdot x}\,\dd\nu(x)\right|
\le \int_{\R^d} |e^{-i\xi\cdot x}|\,\dd|\nu|(x)
=|\nu|(\R^d)=\|\nu\|_{\mathrm{TV}}.    
\end{equation*}

2. Fix $\xi,h\in\R^d$. Then
\begin{equation*}
\Fhat{\nu}(\xi+h)-\Fhat{\nu}(\xi)
=\int_{\R^d} \bigl(e^{-i(\xi+h)\cdot x}-e^{-i\xi\cdot x}\bigr)\,\dd\nu(x),   \end{equation*}
hence, by $|e^{-i\xi \cdot x}|=1$, we obtain
\begin{equation*}
|\Fhat{\nu}(\xi+h)-\Fhat{\nu}(\xi)|
\le \int_{\R^d} |e^{-i(\xi+h)\cdot x}-e^{-i\xi\cdot x}|\,\dd|\nu|(x)=\int_{\R^d} |e^{-ih\cdot x}-1|\,\dd|\nu|(x),
\end{equation*}
where the last integral does not depend on $\xi$. Testing on sequence $h_n\to 0$ and using dominated convergence, we obtain $\int_{\R^d} |e^{-ih\cdot x}-1|\,d|\nu|(x)\to 0$ as $h\to 0$, deducing the claim.

 3. Direct consequence of the definition \eqref{eq:FT-measure} with $\xi=0$.

 4. Since $\nu(\R^d)=0$ we have $\Fhat{\nu}(\xi)=\int (e^{-i\xi\cdot x}-1)\,\dd\nu(x)$.
Using the bound $|e^{-it}-1|\le |t|$ for $t\in\R$, we obtain
\begin{equation*}
|\Fhat{\nu}(\xi)|
\le \int |e^{-i\xi\cdot x}-1|\,\dd|\nu|(x)
\le \int |\xi\cdot x|\,\dd|\nu|(x)
\le |\xi|\int |x|\,\dd|\nu|(x).    
\end{equation*}
\end{proof}

We denote by $\mathcal{S}(\R^d,\mathbb{C})$ the \emph{Schwartz space} of
rapidly decreasing smooth functions, that is
\begin{equation*}
    \mathcal{S}(\R^d,\mathbb{C}) := \Bigl\{ f \in C^\infty(\R^d,\mathbb{C})
    \,:\, \sup_{x\in\R^d} |x^\alpha \partial^\gamma f(x)| < \infty
    \text{ for all multi-indices } \alpha, \gamma \in \N^d \Bigr\}.
\end{equation*}
Equipped with the family of seminorms $\|f\|_{\alpha,\gamma} :=
\sup_{x\in\R^d} |x^\alpha \partial^\gamma f(x)|$,
$\mathcal{S}(\R^d,\mathbb{C})$ is a Fr\'echet space whose continuous dual
$\mathcal{S}'(\R^d,\mathbb{C})$ is the space of tempered distributions.
The Fourier transform is an automorphism of $\mathcal{S}(\R^d,\mathbb{C})$,
and Fourier inversion holds pointwise for every
$f \in \mathcal{S}(\R^d,\mathbb{C})$:
\begin{equation}\label{e:fourier-inversion}
    f(x) = (2\pi)^{-d}\int_{\R^d} e^{i\xi \cdot x}\, \Fhat{f}(\xi)\,\dd\xi,
    \qquad x \in \R^d.
\end{equation}
We shall systematically exploit this
to rewrite pairings $\langle f, \nu \rangle$ between Schwartz functions and
Radon measures as integrals in frequency space, which will allow us to
introduce a natural pre-Hilbert structure on $\calM_0(\R^d)$ via the
homogeneous Sobolev norm $\|\cdot\|_{H^{-s}_0}$,
see Lemma~\ref{l:pairing-formula} and Section~\ref{sec:Hilbert} below.

\begin{lemma}\label{l:pairing-formula}
Let $\nu\in\calM(\R^d)$ and let $f\in\mathcal{S}(\R^d,\mathbb{C})$.
Then
\begin{equation}\label{e:pairing-formula}
\langle f,\nu \rangle:=\int_{\R^d} f(x)\,\dd\nu(x)
=(2\pi)^{-d}\int_{\R^d}\Fhat{f}(\xi)\,\overline{\Fhat{\nu}(\xi)}\,\dd\xi.
\end{equation}
\end{lemma}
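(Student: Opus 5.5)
The plan is to prove the pairing formula \eqref{e:pairing-formula} by inserting the Fourier inversion formula \eqref{e:fourier-inversion} for $f$ into $\int f\,\dd\nu$ and exchanging the order of integration by Fubini. Throughout, $\nu\in\calM(\R^d)$ is understood, as in the definition \eqref{eq:FT-measure}, to have finite total variation, so that $\Fhat\nu$ is bounded by Lemma~\ref{l:FS-basic}.

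Concretely, for every $x\in\R^d$ I will write $f(x)=(2\pi)^{-d}\int_{\R^d} e^{i\xi\cdot x}\Fhat f(\xi)\,\dd\xi$, which holds pointwise because $f\in\mathcal S(\R^d,\mathbb C)$. Integrating against $\nu$ then gives
\[
\int_{\R^d} f\,\dd\nu=(2\pi)^{-d}\int_{\R^d}\int_{\R^d} e^{i\xi\cdot x}\,\Fhat f(\xi)\,\dd\xi\,\dd\nu(x).
\]

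To justify Fubini, I apply it to the product of $|\nu|$ (via the Jordan or polar decomposition $\dd\nu=h\,\dd|\nu|$ with $|h|=1$) and Lebesgue measure. The integrand has modulus $|\Fhat f(\xi)|$, and
\[
\iint |\Fhat f(\xi)|\,\dd\xi\,\dd|\nu|(x)=\|\Fhat f\|_{L^1}\,\|\nu\|_{\mathrm{TV}}<\infty,
\]
since $\Fhat f\in\mathcal S\subset L^1$. This integrability check is the only real point of the argument, and it is routine.

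After swapping the integrals, the inner integral is $\int e^{i\xi\cdot x}\,\dd\nu(x)=\overline{\int e^{-i\xi\cdot x}\,\dd\nu(x)}=\overline{\Fhat\nu(\xi)}$, using that $\nu$ is real-valued. This yields $(2\pi)^{-d}\int \Fhat f\,\overline{\Fhat\nu}\,\dd\xi$, which is the claimed formula.
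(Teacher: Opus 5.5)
Your proposal is correct and follows the paper's proof step for step: Fourier inversion for $f$, then Fubini justified by $\|\Fhat f\|_{L^1}\,\|\nu\|_{\mathrm{TV}}<\infty$, then the identification $\int e^{i\xi\cdot x}\,\dd\nu(x)=\overline{\Fhat\nu(\xi)}$, which uses that $\nu$ is real-valued. You also state explicitly that $\nu$ must have finite total variation, an assumption the paper uses only implicitly.
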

\begin{proof}
Integrate both sides of \eqref{e:fourier-inversion} against $\nu$.
Since $\Fhat{f}\in \mathcal{S}(\R^d,\mathbb{C})$, in particular
$\Fhat{f} \in L^1(\R^d)$, so by taking the absolute values and by the finiteness
of $|\nu|$,
\[
\int_{\R^d}\int_{\R^d}\bigl|e^{i\xi\cdot x}\Fhat{f}(\xi)\bigr|\,\dd\xi\,\dd|\nu|(x)
=\|\Fhat{f}\|_{L^1(\R^d)}\,\|\nu\|_{\mathrm{TV}}<\infty.
\]
Fubini's theorem thus applies, yielding
\[
\int_{\R^d} f(x)\,\dd\nu(x)
=(2\pi)^{-d}\int_{\R^d}\Fhat{f}(\xi)
\left(\int_{\R^d}e^{i\xi\cdot x}\,\dd\nu(x)\right)\dd\xi.
\]
Since $\nu$ is real-valued,
$\int e^{i\xi\cdot x}\,\dd\nu(x) = \overline{\Fhat{\nu}(\xi)}$,
which proves \eqref{e:pairing-formula}.
\end{proof}


\begin{corollary}\label{c:uniqueness}
Let $\nu\in\calM(\R^d)$. If $\Fhat{\nu}(\xi)=0$ for all $\xi\in\R^d$, then $\nu=0$.
\end{corollary}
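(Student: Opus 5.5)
The plan is to use Lemma~\ref{l:pairing-formula}, which converts integrals against $\nu$ into frequency-space integrals. That lets us reduce vanishing of $\Fhat{\nu}$ to vanishing of all pairings $\langle f,\nu\rangle$ with Schwartz functions. From there, density arguments give $\nu=0$ as a Radon measure.

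\emph{Step 1.} Assume $\Fhat{\nu}\equiv 0$ and fix any $f\in\mathcal{S}(\R^d,\mathbb{C})$. By \eqref{e:pairing-formula},
\[
\int_{\R^d} f\,\dd\nu=(2\pi)^{-d}\int_{\R^d}\Fhat{f}(\xi)\,\overline{\Fhat{\nu}(\xi)}\,\dd\xi=0 .
\]
In particular $\int\varphi\,\dd\nu=0$ for every real $\varphi\in C_c^\infty(\R^d)\subset\mathcal{S}$.

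\emph{Step 2.} Extend this to all $\varphi\in C_c(\R^d)$. Given such a $\varphi$, mollify it: $\varphi_\varepsilon=\varphi*\rho_\varepsilon\in C_c^\infty$. Then $\varphi_\varepsilon\to\varphi$ uniformly, and all the $\varphi_\varepsilon$ are supported in a fixed compact set. Since $|\nu|$ is finite, because $\nu$ is a finite signed measure (implicitly used already in Lemma~\ref{l:FS-basic}), we get $\int\varphi_\varepsilon\,\dd\nu\to\int\varphi\,\dd\nu$. Hence $\int\varphi\,\dd\nu=0$ for all $\varphi\in C_c(\R^d)$.

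\emph{Step 3.} Conclude by the uniqueness part of the Riesz representation theorem. A signed Radon measure annihilating $C_c(\R^d)$ is zero. Concretely, write $\nu=\nu^+-\nu^-$ (Jordan decomposition). Then the positive Radon measures $\nu^+$ and $\nu^-$ induce the same functional on $C_c$. They therefore agree on compact sets, since indicators of compacts are decreasing limits of $C_c$ functions and monotone convergence applies. By inner regularity they agree on all Borel sets, so $\nu=0$.

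The only mildly delicate point is Step 3: passing from vanishing on test functions to vanishing as a measure. It is handled by standard regularity of finite Radon measures. No step is a genuine obstacle.
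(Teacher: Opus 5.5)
Your proposal is correct and follows the same route as the paper: Lemma~\ref{l:pairing-formula} makes every Schwartz pairing vanish, density of $C_c^\infty$ in $C_c$ (which you make explicit via mollification and the finiteness of $|\nu|$) extends this to all of $C_c(\R^d)$, and the uniqueness part of the Riesz representation theorem gives $\nu=0$. Your Step~3 writes out the Riesz uniqueness argument that the paper only cites by name, namely the Jordan decomposition, approximation of indicators of compact sets, and inner regularity.
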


\begin{proof}
Assume $\Fhat{\nu}(\xi) = 0$ for all $\xi \in \R^d$. For every 
$f \in \mathcal{S}(\R^d,\mathbb{C})$, Lemma~\ref{l:pairing-formula} gives
\begin{equation*}
\int_{\R^d} f(x)\,\dd\nu(x)
= (2\pi)^{-d}\int_{\R^d}\Fhat{f}(\xi)\,\overline{\Fhat{\nu}(\xi)}\,\dd\xi = 0.
\end{equation*}
Since $C^\infty_c(\R^d) \subset \mathcal{S}(\R^d,\mathbb{C})$ is dense in 
$C_c(\R^d)$ with respect to the supremum norm, the above implies 
$\int f\,\dd\nu = 0$ for all $f \in C_c(\R^d)$, and the conclusion 
$\nu = 0$ follows from the Riesz representation theorem.
\end{proof}

\subsection{Hilbert Structure on \texorpdfstring{$\calM_0(\R^d)$}{M\_0(R\^{}d)}}\label{sec:Hilbert}
The Fourier-analytic perspective on $\calM(\R^d)$ developed in the 
previous subsection allows us to equip $\calM_0(\mathbb{R}^d)$ with a natural 
pre-Hilbert structure. The key observation is that for $\nu \in \calM_0(\R^d)$ 
the Fourier transform $\Fhat{\nu}$ vanishes at the origin (since 
$\Fhat{\nu}(0) = \nu(\R^d) = 0$), which makes the weight $|\xi|^{-2s}$ 
integrable near zero against $|\Fhat{\nu}|^2$, as we verify in 
Proposition~\ref{p:finitezza norma} below. This motivates us to introduce the following negative Sobolev norm.
For every $\nu\in \calM(\R^d)$ and $f\in \mathcal{S}(\R^d,\mathbb{C})$ 
we define the following
\begin{equation}\label{e:norma sobolev negativa}
    \|\nu\|_{H_0^{-s}}:=\int_{\R^d}|\Fhat{\nu}(\xi)|^2\,|\xi|^{-2s}\,\dd \xi \quad \textup{and} \quad  \|f\|_{H_0^{s}}:=\int_{\R^d}|\Fhat{f}(\xi)|^2\,|\xi|^{2s}\,\dd \xi.
\end{equation}
Note that $0 < \|f\|_{H_0^{s}} < \infty$ for every non-zero 
$f \in \mathcal{S}(\R^d,\mathbb{C})$, since Schwartz functions decay 
faster than any polynomial and their Fourier transforms do likewise, 
so the weight $|\xi|^{2s}$ does not affect finiteness of the integral.
\begin{proposition}\label{p:finitezza norma}
Let $\nu\in\calM(\R^d)$, then 
\begin{itemize}
    \item[1.] if $\nu(\R^d)\neq 0$ then $\|\nu\|_{H_0^{-s}}=\infty$;
    \item[2.] if $\nu\in \calM_0(\R^d)$ then $\|\nu\|_{H_0^{-s}}<\infty$.
\end{itemize}
\end{proposition}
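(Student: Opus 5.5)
The plan is to split the frequency integral defining $\|\nu\|_{H_0^{-s}}$ into the low-frequency region $\{|\xi|\le 1\}$ and the high-frequency region $\{|\xi|>1\}$. The weight is $|\xi|^{-2s}=|\xi|^{-(d+q)}$. All the behaviour near the origin is governed by $\Fhat{\nu}(0)=\nu(\R^d)$ together with the continuity of $\Fhat{\nu}$ (Lemma~\ref{l:FS-basic}, items 2 and 3).

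For item 1, assume $\nu(\R^d)=m\neq 0$. By uniform continuity of $\Fhat{\nu}$ and item 3 of Lemma~\ref{l:FS-basic}, there is $\delta>0$ with $|\Fhat{\nu}(\xi)|\ge |m|/2$ for $|\xi|<\delta$. Then
\[
\|\nu\|_{H_0^{-s}}\ \ge\ \frac{m^2}{4}\int_{|\xi|<\delta}|\xi|^{-(d+q)}\,\dd\xi=\frac{m^2}{4}\,|\mathbb S^{d-1}|\int_0^\delta r^{-1-q}\,\dd r=\infty,
\]
because $q>0$. This settles item 1.

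For item 2, take $\nu\in\calM_0(\R^d)$. Its support is compact and it has finite total variation, so $M_1:=\int|x|\,\dd|\nu|(x)<\infty$. By item 4 of Lemma~\ref{l:FS-basic}, $|\Fhat{\nu}(\xi)|^2\le M_1^2|\xi|^2$. The low-frequency part is therefore bounded by
\[
M_1^2\int_{|\xi|\le1}|\xi|^{2-d-q}\,\dd\xi=M_1^2|\mathbb S^{d-1}|\int_0^1 r^{1-q}\,\dd r<\infty,
\]
which is finite since $q<2$. For the high-frequency part I would use item 1 of Lemma~\ref{l:FS-basic}: $|\Fhat{\nu}|\le\|\nu\|_{\mathrm{TV}}$. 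This gives the bound $\|\nu\|_{\mathrm{TV}}^2\int_{|\xi|>1}|\xi|^{-d-q}\,\dd\xi=\|\nu\|_{\mathrm{TV}}^2|\mathbb S^{d-1}|/q<\infty$, finite because $q>0$. Adding the two regions gives $\|\nu\|_{H_0^{-s}}<\infty$.

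The only delicate point is the exponent bookkeeping. Positivity of $q$ is what makes the origin non-integrable in item 1 and the tail integrable in item 2. The restriction $q<2$ is exactly what lets the first-order vanishing of $\Fhat{\nu}$ at the origin (item 4 of Lemma~\ref{l:FS-basic}) compensate the singular weight. No further obstacle is expected.
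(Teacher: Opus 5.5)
Your proposal is correct and follows essentially the same argument as the paper. In item 1, continuity of $\widehat{\nu}$ at the origin with $\widehat{\nu}(0)=\nu(\mathbb{R}^d)\neq 0$ forces divergence of $\int_0^\delta r^{-1-q}\,dr$. In item 2, the same split at $|\xi|=1$ uses the total-variation bound at high frequencies and the first-order vanishing bound (item 4 of Lemma~\ref{l:FS-basic}) at low frequencies; the surface-area constants you keep track of, and the paper drops, do not matter.
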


\begin{proof}
1. By Lemma~\ref{l:FS-basic} we have that $\Fhat{\nu}(0)=\nu(\R^d)\neq 0$ and $\Fhat{\nu}$ is continuous. Therefore there exist $\delta>0$ and $c_0>0$ such that
$|\Fhat{\nu}(\xi)|\ge c_0$ for all $|\xi|\le \delta$. In particular
\begin{align*}
\int_{\R^d}|\Fhat{\nu}(\xi)|^2\,|\xi|^{-(d+q)}\,\dd\xi&
 \ge\ \int_{|\xi|\le\delta} c_0^2\,|\xi|^{-(d+q)}\,\dd\xi\\
 &=c_0^2\int_0^\delta r^{-(d+q)}\,r^{d-1}\,\dd r=c_0^2\int_0^\delta r^{-1-q}\,\dd r=+\infty.    
\end{align*}

2. We split the integral:
\begin{equation*}
\int_{\R^d}|\Fhat{\nu}(\xi)|^2|\xi|^{-(d+q)}\,\dd\xi
=\int_{|\xi|\le 1}|\Fhat{\nu}(\xi)|^2|\xi|^{-(d+q)}\,\dd\xi+\int_{|\xi|\ge 1}|\Fhat{\nu}(\xi)|^2|\xi|^{-(d+q)}\,\dd\xi.
\end{equation*}

By Lemma~\ref{l:FS-basic} item 1., $|\Fhat{\nu}(\xi)|\le \|\nu\|_{\mathrm{TV}}$. Hence
\begin{equation*}
\int_{|\xi|\ge 1}|\Fhat{\nu}(\xi)|^2|\xi|^{-(d+q)}\,\dd\xi
\le \|\nu\|_{\mathrm{TV}}^2 \int_{|\xi|\ge 1}|\xi|^{-(d+q)}\,\dd\xi=\|\nu\|_{\mathrm{TV}}^2\int_1^\infty r^{-1-q}\,\dd r<\infty.
\end{equation*}

Since $\nu\in\calM_0(\R^d)$, Lemma~\ref{l:FS-basic} item 4. yields
$|\Fhat{\nu}(\xi)|\le C_\nu|\xi|$ for all $\xi$, where $C_\nu=\int_\Omega |x|\,d|\nu|(x)<\infty$, because $\nu$ has compact support.
Therefore
\begin{equation*}
\int_{|\xi|\le 1}|\Fhat{\nu}(\xi)|^2|\xi|^{-(d+q)}\,\dd\xi
\le C_\nu^2\int_{|\xi|\le 1}|\xi|^{2-(d+q)}\,\dd\xi= C^2_{\nu}\int_0^1 r^{1-q}\,\dd r<\infty.
\end{equation*}
Combining the two parts proves finiteness.
\end{proof}
Proposition~\ref{p:finitezza norma} shows that $\|\cdot\|_{H_0^{-s}}$ 
is a well-defined finite quantity on $\calM_0(\R^d)$. To upgrade this 
to a genuine pre-Hilbert structure, we introduce the following inner 
product on $\calM_0(\R^d)$: for every $\nu, \eta \in \calM_0(\R^d)$ define
\begin{equation}\label{e:innerprod}
\langle \nu,\eta\rangle_{H_0^{-s}}
:=\int_{\R^d}\mathfrak{Re}\big(\frac{\Fhat{\nu}(\xi)\,\overline{\Fhat{\eta}(\xi)}+\overline{\Fhat{\nu}(\xi)}\,{\Fhat{\eta}(\xi)}}{2}\big)\,|\xi|^{-2s}\,\dd\xi.
\end{equation}

\begin{proposition}
 $(\calM_0(\R^d), \langle \cdot , \cdot\rangle_{H^{-s}_0})$ is a real Pre-Hilbert space.   
\end{proposition}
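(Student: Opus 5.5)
The plan is to verify the axioms of a real inner product directly from the definition \eqref{e:innerprod}. The first task is to make sure the form is well defined and finite. For $\nu,\eta\in\calM_0(\R^d)$ the integrand is pointwise bounded:
\[
\Bigl|\mathfrak{Re}\bigl(\tfrac{\Fhat{\nu}\overline{\Fhat{\eta}}+\overline{\Fhat{\nu}}\Fhat{\eta}}{2}\bigr)\Bigr|\,|\xi|^{-2s}\le |\Fhat{\nu}(\xi)|\,|\Fhat{\eta}(\xi)|\,|\xi|^{-2s}\le \tfrac12\bigl(|\Fhat{\nu}|^2+|\Fhat{\eta}|^2\bigr)|\xi|^{-2s}.
\]
The right-hand side is integrable by Proposition~\ref{p:finitezza norma}, item 2. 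So the integral converges absolutely and defines a real number. I would also note that $\calM_0(\R^d)$ is a real vector space: total mass is linear, and a union of two compact supports is compact.

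Next come the algebraic properties. Since $\nu$ is real, $\overline{\Fhat{\nu}(\xi)}=\Fhat{\nu}(-\xi)$. In any case, the expression $\tfrac12(\Fhat{\nu}\overline{\Fhat{\eta}}+\overline{\Fhat{\nu}}\Fhat{\eta})=\mathfrak{Re}(\Fhat{\nu}\overline{\Fhat{\eta}})$ is manifestly symmetric in $\nu,\eta$. Bilinearity over $\R$ follows from linearity of $\nu\mapsto\Fhat{\nu}$, real scalars passing through the complex conjugate, and linearity of the integral. The latter is legitimate because each piece is absolutely integrable by the first step.

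The substantive point is positive definiteness. First,
\[
\langle\nu,\nu\rangle_{H_0^{-s}}=\int|\Fhat{\nu}|^2|\xi|^{-2s}\,d\xi=\|\nu\|_{H_0^{-s}}\ge0 .
\]
Now suppose this vanishes. Then $\Fhat{\nu}=0$ almost everywhere, because the weight $|\xi|^{-2s}$ is strictly positive away from the origin. By Lemma~\ref{l:FS-basic}, item 2, $\Fhat{\nu}$ is continuous, so $\Fhat{\nu}\equiv0$ on all of $\R^d$. Corollary~\ref{c:uniqueness} then gives $\nu=0$.

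Hence $(\calM_0(\R^d),\langle\cdot,\cdot\rangle_{H_0^{-s}})$ is a real pre-Hilbert space, with induced norm $\sqrt{\|\cdot\|_{H_0^{-s}}}$ in the notation of \eqref{e:norma sobolev negativa}. The only step needing care is the passage from "zero almost everywhere" to "identically zero", which continuity handles. Everything else is routine. Completeness is not claimed, hence "pre".
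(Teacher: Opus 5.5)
Your proposal is correct and follows the paper's proof essentially step by step: finiteness of the form from Proposition~\ref{p:finitezza norma} (you use the pointwise bound $|ab|\le\frac12(|a|^2+|b|^2)$ where the paper uses Cauchy--Schwarz in $L^2$, which makes no difference), bilinearity and symmetry by inspection, and definiteness via $\Fhat{\nu}=0$ a.e., continuity from Lemma~\ref{l:FS-basic}, and Corollary~\ref{c:uniqueness}. Your remark that the induced norm is the square root of the quantity defined in \eqref{e:norma sobolev negativa} also correctly works around a notational slip in the paper, which writes $\langle\nu,\nu\rangle^{1/2}_{H^{-s}_0}=\|\nu\|_{H^{-s}_0}$ with the unsquared definition.
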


\begin{proof}
 For every $\nu,\eta\in\calM_0(\R^d)$ we have that \eqref{e:innerprod} is well-defined, indeed
 \begin{align*}
    \int_{\R^d}\bigg|\mathfrak{Re}\bigg(\frac{\Fhat{\nu}(\xi)\,\overline{\Fhat{\eta}(\xi)}+\overline{\Fhat{\nu}(\xi)}\,{\Fhat{\eta}(\xi)}}{2}\bigg)\bigg|\,|\xi|^{-2s}\,\dd\xi&\leq  \int_{\R^d}|\Fhat{\nu}(\xi)\,\overline{\Fhat{\eta}(\xi)}|\,|\xi|^{-2s}\,\dd\xi\\ 
    &= \int_{\R^d}|\Fhat{\nu}(\xi)| |\xi|^{-s}\,|{\Fhat{\eta}(\xi)}|\,|\xi|^{-s}\,\dd\xi 
 \end{align*}
 and by Proposition~\ref{p:finitezza norma} item 2. we have that $|\Fhat{\nu}(\xi)| |\xi|^{-s}, |\Fhat{\eta}(\xi)| |\xi|^{-s} \in L^2(\R^d)$, therefore we can use Cauchy-Schwartz inequality in $L^2(\R^d)$ to infer
 \begin{equation*}
          \int_{\R^d}\big|\mathfrak{Re}\big(\frac{\Fhat{\nu}(\xi)\,\overline{\Fhat{\eta}(\xi)}+\overline{\Fhat{\nu}(\xi)}\,{\Fhat{\eta}(\xi)}}{2}\big)\big|\,|\xi|^{-2s}\,\dd\xi \leq \|\nu\|_{H^{-s}_0}\|\eta\|_{H^{-s}_0}<\infty.
 \end{equation*}

 The $\R$-bilinearity and the symmetry of $\langle \cdot , \cdot\rangle_{H^{-s}_0}$ are clear from definition \eqref{e:innerprod}, from which we can also deduce that $\langle \nu,\nu\rangle_{H^{-s}_0}^{\frac12}=\|\nu\|_{H^{-s}_0}$ for every $\nu\in \calM_0(\R^d)$.

 To conclude the proof we have to show that $\|\nu\|_{H^{-s}_0}=0$ if and only if $\nu=0$. An implication is clear. Assume now $\|\nu\|_{H^{-s}_0}=0$, therefore we have $\Fhat{\nu}(\xi)=0$ for $\mathcal{L}^d$-a.e. $\xi\in \R^d$ and, being $\Fhat{\nu}$ continuous by Lemma~\ref{l:FS-basic} item 2., we obtain that $\Fhat{\nu}(\xi)=0$ for every $\xi\in\R^d$. The conclusion follows from Corollary \ref{c:uniqueness}. 
\end{proof}

\begin{remark}
 Let $\overline{\calM_0}$ be the completion of $\calM_0(\R^d)$ with respect to the metric induced by the norm
$\|\cdot\|_{H_0^{-s}}$ and consider the $\R$-linear isometry $T:\calM_0(\R^d)\to L^2(\R^d,\mathbb{C})$ given by 
\begin{equation*}
    T(\nu)(\xi):= \Fhat{\nu}(\xi)|\xi|^{-s}.
\end{equation*}
In particular, being $L^2(\R^d,\mathbb{C})$ separable, we have that $(\overline{\calM_0},\langle\cdot,\cdot\rangle_{H^{-s}_0})$ is a separable Hilbert space.
\end{remark}
The following density result will be used to identify the dual norm 
on $\calM_0(\R^d)$ in Proposition~\ref{p:dual-norm-exact} below, 
which is the key tool in the proof of the lower bound in 
Section~\ref{sec:lower bound}.
\begin{lemma}\label{l:dense-weighted-Schwartz}
The set $\mathcal D_s
:=\bigl\{\,|\xi|^{s}\Fhat{f}(\xi): f\in\mathcal S(\R^d)\,\bigr\}$ is dense in $L^2(\R^d,\mathbb{C})$.
\end{lemma}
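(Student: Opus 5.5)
We argue by orthogonality: since $L^2(\R^d,\mathbb{C})$ is a Hilbert space, it suffices to show that the only $g\in L^2(\R^d,\mathbb{C})$ with
\[
\int_{\R^d} g(\xi)\,\overline{|\xi|^{s}\Fhat{f}(\xi)}\,\dd\xi=0\qquad\text{for all } f\in\mathcal S(\R^d)
\]
is $g=0$. Because the set $\{\Fhat f : f\in\mathcal S\}$ equals $\mathcal S$ (the Fourier transform is an automorphism of $\mathcal S$), the condition says $\int g\,|\xi|^s\,\overline{\varphi}\,\dd\xi=0$ for every $\varphi\in\mathcal S(\R^d,\mathbb{C})$. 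If $f$ is only allowed to be real-valued, then $\{\Fhat f\}$ consists of Hermitian Schwartz functions. Since $\varphi$ and $i\varphi$ are both Hermitian-admissible combinations, we can recover all of $\mathcal S$ by writing $\varphi=\varphi_1+i\varphi_2$ with $\varphi_j$ Hermitian, where $\varphi_1(\xi)=(\varphi(\xi)+\overline{\varphi(-\xi)})/2$ and similarly for $\varphi_2$. The condition is complex-linear in $\varphi$, so it extends to all of $\mathcal S$.

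\textbf{Key step.} Restrict to test functions $\varphi\in C_c^\infty(\R^d\setminus\{0\})\subset\mathcal S$. On the support of such $\varphi$, the weight $|\xi|^s$ is smooth and bounded above and below, so $\psi:=|\xi|^{s}\varphi$ ranges over all of $C_c^\infty(\R^d\setminus\{0\})$: given $\psi$, take $\varphi=|\xi|^{-s}\psi$. Hence $\int g\,\overline{\psi}\,\dd\xi=0$ for all $\psi\in C_c^\infty(\R^d\setminus\{0\})$. Now $g\in L^2\subset L^1_{\rm loc}$, so the fundamental lemma of the calculus of variations gives $g=0$ a.e. on $\R^d\setminus\{0\}$. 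The origin is Lebesgue-null, so $g=0$ in $L^2$.

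Alternatively, and more directly, $C_c^\infty(\R^d\setminus\{0\})$ is dense in $L^2(\R^d)$, since $\{0\}$ has measure zero. For any $h\in L^2$ and $\varepsilon>0$, pick $\psi\in C_c^\infty(\R^d\setminus\{0\})$ with $\|h-\psi\|_{L^2}<\varepsilon$. Then $f:=\mathcal F^{-1}(|\xi|^{-s}\psi)\in\mathcal S$ satisfies $|\xi|^s\Fhat f=\psi$. This yields density without invoking duality, and we would present this version.

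\textbf{Main obstacle.} The only delicate point is that $|\xi|^{\pm s}$ is not smooth at the origin, so $|\xi|^{-s}\Fhat{f}$ need not lie in $\mathcal S$ for a general $f$. Avoiding a neighbourhood of $0$ through compactly supported cutoffs away from the origin removes this issue entirely. We also check that the real-valuedness convention ($\mathcal S(\R^d)$ versus $\mathcal S(\R^d,\mathbb C)$) does not matter: by the decomposition above, the closed span is the same.
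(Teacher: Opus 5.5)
Your direct argument (the version you say you would present) is exactly the paper's proof: every $\psi\in C_c^\infty(\R^d\setminus\{0\})$ lies in $\mathcal D_s$, since you can take $\Fhat f=|\xi|^{-s}\psi\in\mathcal S$, and $C_c^\infty(\R^d\setminus\{0\})$ is dense in $L^2(\R^d,\mathbb C)$. One caveat on your side remark: if $f$ were restricted to be real-valued, $\mathcal D_s$ would consist only of functions with $h(-\xi)=\overline{h(\xi)}$, a proper closed real subspace, so only its complex span (not the set itself) would be dense; the lemma must therefore be read with $f\in\mathcal S(\R^d,\mathbb C)$, as the paper does.
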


\begin{proof}
Since the Fourier transform is an automorphism of $\mathcal S(\R^d,\mathbb{C})$, the following equality holds $\mathcal D_s=\{\,|\xi|^{s}g(\xi): g\in\mathcal S(\R^d,\mathbb{C})\,\}.
$ We claim that $C_c^\infty(\R^d\setminus\{0\})\subset \mathcal D_s$.
Indeed, if $\phi\in C_c^\infty(\R^d\setminus\{0\},\mathbb{C})$ then $g(\xi):=|\xi|^{-s}\phi(\xi)$ is smooth with compact support
away from $0$, hence $g\in\mathcal S(\R^d,\mathbb{C})$ and $\phi=|\xi|^{s}g\in\mathcal D_s$.

Finally, $C_c^\infty(\R^d\setminus\{0\},\mathbb{C})$ is dense in $L^2(\R^d,\mathbb{C})$:
given $h\in L^2(\R^d,\mathbb{C})$, first approximate $h$ by a compactly supported smooth function $\tilde h\in C_c^\infty(\R^d,\mathbb{C})$,
then multiply $\tilde h$ by a smooth cutoff that vanishes on $B(0,\varepsilon)$ and equals $1$ outside $B(0,2\varepsilon)$.
Letting $\varepsilon\downarrow 0$ gives convergence in $L^2(\R^d,\mathbb{C})$ and hence $\overline{\mathcal D_s}^{\,L^2}=L^2(\R^d,\mathbb{C})$.
\end{proof}

\begin{proposition}\label{p:dual-norm-exact}
Let $\nu\in {\calM_0}(\R^d)$, then
\begin{equation}\label{eq:dual-norm-exact}
\sup\Bigl\{\,|\langle f,\nu\rangle|:\ f\in\mathcal S(\R^d,\mathbb{C}),\ \|f\|_{H_0^{s}(\R^d)}\le 1\,\Bigr\}
=(2\pi)^{-d}\,\|\nu\|_{H^{-s}_0}.
\end{equation}
where $\langle f,\nu \rangle$ is defined in \eqref{e:pairing-formula}.
\end{proposition}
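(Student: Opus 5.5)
The plan is to read the pairing $\langle f,\nu\rangle$ as an $L^2(\R^d,\mathbb{C})$ inner product, using the isometry $T(\nu)(\xi)=\Fhat{\nu}(\xi)|\xi|^{-s}$ from the Remark above, and then obtain the two inequalities from Cauchy--Schwarz and from the density result Lemma~\ref{l:dense-weighted-Schwartz}. Throughout I interpret $\|\nu\|_{H_0^{-s}}$ and $\|f\|_{H_0^{s}}$ as the square roots of the integrals in \eqref{e:norma sobolev negativa}. This is consistent with $\langle \nu,\nu\rangle_{H_0^{-s}}^{1/2}=\|\nu\|_{H_0^{-s}}$, and it is the only reading under which \eqref{eq:dual-norm-exact} is homogeneous.

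\emph{Upper bound.} Fix $f\in\mathcal S(\R^d,\mathbb{C})$ with $\|f\|_{H_0^s}\le 1$. By Lemma~\ref{l:pairing-formula},
\begin{equation*}
\langle f,\nu\rangle=(2\pi)^{-d}\int_{\R^d}\bigl(|\xi|^{s}\Fhat f(\xi)\bigr)\,\overline{\bigl(|\xi|^{-s}\Fhat\nu(\xi)\bigr)}\,\dd\xi .
\end{equation*}
The splitting of $|\xi|^{s}|\xi|^{-s}=1$ is legitimate away from the null set $\{0\}$. Both factors lie in $L^2$: the first because $f$ is Schwartz, the second by Proposition~\ref{p:finitezza norma}, item~2. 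Cauchy--Schwarz in $L^2(\R^d,\mathbb{C})$ therefore gives
\begin{equation*}
|\langle f,\nu\rangle|\le(2\pi)^{-d}\|f\|_{H_0^s}\|\nu\|_{H_0^{-s}}\le(2\pi)^{-d}\|\nu\|_{H_0^{-s}}.
\end{equation*}

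\emph{Lower bound.} If $\nu=0$ there is nothing to prove. Otherwise set $h:=T(\nu)/\|\nu\|_{H_0^{-s}}$, which is a unit vector in $L^2$. By Lemma~\ref{l:dense-weighted-Schwartz}, there exist $f_n\in\mathcal S(\R^d,\mathbb{C})$ with $g_n:=|\xi|^s\Fhat{f_n}\to h$ in $L^2$. Since $\|g_n\|_{L^2}\to1$, I may normalize and replace $f_n$ by $f_n/\|g_n\|_{L^2}$, which is still Schwartz, so that $\|f_n\|_{H_0^s}=1$ and still $g_n\to h$. By continuity of the $L^2$ inner product,
\begin{equation*}
\langle f_n,\nu\rangle=(2\pi)^{-d}\,(g_n,T\nu)_{L^2}\;\longrightarrow\;(2\pi)^{-d}\,(h,T\nu)_{L^2}=(2\pi)^{-d}\|\nu\|_{H_0^{-s}}.
\end{equation*}
Hence the supremum is at least $(2\pi)^{-d}\|\nu\|_{H_0^{-s}}$, which gives equality.

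There is no serious obstacle. The steps needing care are the norm convention, and checking that Lemma~\ref{l:dense-weighted-Schwartz} gives density in the complex space $L^2(\R^d,\mathbb{C})$, which matters because $T\nu$ is complex-valued. Note also that complex-valued test functions $f$ are allowed in the supremum. Had the supremum been restricted to real-valued $f$, one would additionally approximate Hermitian-symmetric targets, using $\overline{\Fhat\nu(\xi)}=\Fhat\nu(-\xi)$; this is unnecessary as stated.
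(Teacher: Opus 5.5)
Your proof is correct and follows essentially the same route as the paper. You write the pairing as $(2\pi)^{-d}$ times the $L^2(\R^d,\mathbb{C})$ inner product of $|\xi|^{s}\Fhat f$ and $|\xi|^{-s}\Fhat\nu$ and apply Cauchy--Schwarz for the upper bound, then approximate the normalized $T(\nu)$ by elements of $\mathcal D_s$ via Lemma~\ref{l:dense-weighted-Schwartz} and renormalize for the lower bound. Your reading of the norms in \eqref{e:norma sobolev negativa} as square roots of the integrals is also the one the paper implicitly uses.
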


\begin{proof}
From Lemma~\ref{l:pairing-formula} we have that
\begin{equation}\label{e:pairing due}
\langle f,\nu\rangle
=(2\pi)^{-d}\,\langle\, \Fhat{\nu}|\xi|^{-s},\Fhat f|\xi|^{s}\,\rangle_{L^2(\R^d,\mathbb{C})},
\end{equation}
therefore by Cauchy--Schwarz in $L^2(\R^d,\mathbb{C})$ we have
\begin{equation}\label{Cauchy-Schwartz sob}
|\langle f,\nu\rangle|
\le (2\pi)^{-d}\,\|f\|_{H_0^{s}}\,\|\nu\|_{H^{-s}_0}.
\end{equation}
Taking the supremum over $\|f\|_{H_0^{s}}\le 1$ gives
\begin{equation*}
\sup_{\|f\|_{\dot H^{s}}\le 1}|\langle f,\nu\rangle|\le (2\pi)^{-d}\,\|\nu\|_{H^{-s}_0}.
\end{equation*}

In case $\nu=0$ then the claim is obvious, otherwise let $h(\xi)={\|\nu\|^{-1}_{H^{-s}_0}}\Fhat{\nu}(\xi)|\xi|^{-s}$ and from Lemma~\ref{l:dense-weighted-Schwartz}, we can choose $f_k\in\mathcal{S}(\R^d,\mathbb{C})$ such that
$|\xi|^{s}\Fhat f_k\to h$ in $L^2(\R^d,\mathbb{C})$ as $k\to\infty$.
Then $\|f_k\|_{H_0^{s}}=\||\xi|^{s}\Fhat f_k\|_{L^2(\R^d,\mathbb{C})}\to\|h\|_{L^2(\R^d,\mathbb{C})}=1$.
Define $g_k:=f_k/\|f_k\|_{H_0^{s}}\in \mathcal{S}(\R^d,\mathbb{C})$, so that $\|g_k\|_{H_0^{s}}=1$ and
$|\xi|^{s}\Fhat g_k\to h$ in $L^2(\R^d,\mathbb{C})$.
Using \eqref{e:pairing due} we get
\begin{align*}
|\langle g_k,\nu\rangle|
&= (2\pi)^{-d}\|\nu\|_{H^{-s}_0}\,
   \bigl|\langle h,\,|\xi|^{s}\Fhat{g}_k\rangle_{L^2(\R^d,\mathbb{C})}\bigr|\\
&\longrightarrow\ 
   (2\pi)^{-d}\|\nu\|_{H^{-s}_0}\,
   |\langle h,h\rangle_{L^2}|
= (2\pi)^{-d}\|\nu\|_{H^{-s}_0}.
\end{align*}
\end{proof}

\begin{corollary}\label{c:Schwartz-separates}
Let $\nu\in \overline{\calM_0}$ and assume
\begin{equation*}
    \langle f,\nu\rangle=0
\qquad\text{for all }f\in\mathcal S(\R^d,\mathbb{C}),
\end{equation*}
then $\nu=0$.
\end{corollary}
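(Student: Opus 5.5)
The plan is to transfer the question to $L^2(\R^d,\mathbb{C})$ through the isometry $T$ of the preceding Remark, and then use the density result of Lemma~\ref{l:dense-weighted-Schwartz}.

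\textbf{Step 1: the pairing on the completion.} For $\nu\in\overline{\calM_0}$ the pairing $\langle f,\nu\rangle$ has to be understood as the continuous extension of \eqref{e:pairing-formula}. Fix $f\in\mathcal S(\R^d,\mathbb{C})$. By \eqref{Cauchy-Schwartz sob}, the map $\nu\mapsto\langle f,\nu\rangle$ is $\R$-linear on $\calM_0(\R^d)$ and bounded by $(2\pi)^{-d}\|f\|_{H_0^s}\|\nu\|_{H_0^{-s}}$. It therefore extends uniquely and continuously to $\overline{\calM_0}$. Likewise $T$ extends to an $\R$-linear isometry $\overline T:\overline{\calM_0}\to L^2(\R^d,\mathbb{C})$.

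\textbf{Step 2: the extended pairing in Fourier variables.} By \eqref{e:pairing due}, for $\nu\in\calM_0(\R^d)$ we have
\[
\langle f,\nu\rangle=(2\pi)^{-d}\bigl\langle T\nu,\ |\xi|^s\Fhat f\bigr\rangle_{L^2(\R^d,\mathbb{C})}.
\]
Both sides are continuous in $\nu$ for the $H_0^{-s}$ topology. Approximating a general $\nu\in\overline{\calM_0}$ by $\nu_k\in\calM_0(\R^d)$, we get the same identity on the completion with $T$ replaced by $\overline T$.

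\textbf{Step 3: conclusion.} Set $h:=\overline T\nu\in L^2(\R^d,\mathbb{C})$. The hypothesis says $\langle h,\phi\rangle_{L^2}=0$ for every $\phi\in\mathcal D_s$. We work with the complex inner product and complex-valued $f$, so orthogonality holds against the whole complex subspace $\mathcal D_s$. By Lemma~\ref{l:dense-weighted-Schwartz} this subspace is dense, and continuity of the inner product gives $\langle h,h\rangle_{L^2}=0$, i.e.\ $h=0$. Since $\overline T$ is an isometry, $\|\nu\|_{H_0^{-s}}=\|h\|_{L^2}=0$, hence $\nu=0$ in $\overline{\calM_0}$.

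The only delicate point is Step 1: the pairing must be made meaningful on the abstract completion, and the identity \eqref{e:pairing due} must survive the limit. Both follow from the uniform Cauchy--Schwarz bound \eqref{Cauchy-Schwartz sob}, so the step is routine. The density argument itself is immediate from Lemma~\ref{l:dense-weighted-Schwartz}.
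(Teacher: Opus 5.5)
Your proof is correct and follows the same route as the paper. The paper cites \eqref{e:pairing due}, the density Lemma~\ref{l:dense-weighted-Schwartz} and the Riesz theorem in $L^2(\R^d,\mathbb{C})$. Your Steps 1--2 spell out what the paper leaves implicit: the continuous extension of the pairing and of $T$ to the completion $\overline{\calM_0}$.
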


\begin{proof}
It is a direct consequence of \eqref{e:pairing due}, Lemma~\ref{l:dense-weighted-Schwartz} and Riesz's Theorem in $L^2(\R^d,\mathbb{C})$.
\end{proof}
We conclude this section with the central equivalence result that 
underpins all that follows: the ${H}^{-s}_0$-norm on $\calM_0(\R^d)$ 
coincides, up to a explicit constant, with the energy distance 
$\mathcal{E}_q$, identifying the pre-Hilbert structure just constructed 
with the MMD with power kernel introduced in Section~\ref{sec:intro_MMD}.
\begin{theorem}\label{t:equivalenza}
For every $\nu \in \calM_0(\R^d)$, the following holds
\begin{equation}\label{e:equiv}
\|\nu\|_{H_0^{-s}}^2
= -\,\frac{\kappa_{d,q}}{2}\iint_{\R^d\times\R^d}|x-y|^q\,d\nu(x)\,d\nu(y),
\end{equation}
where $\kappa_{d,q}$ is the constant appears in \ref{t:main-intro}. In particular the map $\Phi:\R^d\to \mathcal{M}_0(\R^d)$ defined by $\Phi(x)=\delta_x-\delta_0$, satisfies
\begin{equation}\label{e:isometry}
   \|\delta_x -\delta_{y}\|^2_{H^{-s}_0}= \|\Phi(x) -\Phi(y)\|^2_{H^{-s}_0}= \kappa_{d,q}|x-y|^q
\end{equation}
for every $x,y\in \R^d$.
\end{theorem}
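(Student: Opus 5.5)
The plan is to reduce \eqref{e:equiv} to a one-dimensional radial integral identity. Write $|\Fhat{\nu}(\xi)|^2=\iint e^{-i\xi\cdot(x-y)}\,d\nu(x)\,d\nu(y)$. Since $\nu$ is real, this equals $\iint \cos(\xi\cdot(x-y))\,d\nu(x)\,d\nu(y)$. Because $\nu(\R^d)=0$, we have $\iint 1\,d\nu\,d\nu=0$, so
\[
|\Fhat{\nu}(\xi)|^2=-\iint\bigl(1-\cos(\xi\cdot(x-y))\bigr)\,d\nu(x)\,d\nu(y).
\]
We multiply by $|\xi|^{-(d+q)}$ and integrate in $\xi$. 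The key point is to exchange the order of integration, which Fubini--Tonelli permits provided
\[
\iint\!\int \bigl(1-\cos(\xi\cdot(x-y))\bigr)|\xi|^{-(d+q)}\,d\xi\,d|\nu|(x)\,d|\nu|(y)<\infty .
\]
The integrand is nonnegative. Its inner $\xi$-integral is computed below to be $c\,|x-y|^q$, and since $\nu$ has compact support and finite total variation, $\iint|x-y|^q\,d|\nu|\,d|\nu|<\infty$. This justifies the exchange.

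The core step is the classical identity
\[
\int_{\R^d}\frac{1-\cos(\xi\cdot z)}{|\xi|^{d+q}}\,d\xi=C_{d,q}\,|z|^q,\qquad z\in\R^d,\ 0<q<2.
\]
By rotation invariance we may take $z=|z|e_1$, and the scaling $\xi\mapsto\xi/|z|$ extracts the factor $|z|^q$. So $C_{d,q}=\int(1-\cos\xi_1)|\xi|^{-d-q}\,d\xi$, which is finite because $1-\cos t\le\min(2,t^2/2)$ and $0<q<2$. Substituting gives
\[
\|\nu\|^2_{H_0^{-s}}=-C_{d,q}\iint|x-y|^q\,d\nu\,d\nu,
\]
so it remains to show $C_{d,q}=\kappa_{d,q}/2=\pi^{d/2}\Gamma(1-q/2)/(2^q\Gamma((d+q)/2))$.

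Evaluating this constant is the step I expect to be the main obstacle, since it is pure bookkeeping with Gamma functions. I would use the subordination formula $|\xi|^{-(d+q)}=\Gamma(\tfrac{d+q}{2})^{-1}\int_0^\infty t^{\frac{d+q}{2}-1}e^{-t|\xi|^2}\,dt$. The Gaussian integral $\int(1-\cos\xi_1)e^{-t|\xi|^2}\,d\xi=(\pi/t)^{d/2}(1-e^{-1/(4t)})$ then leaves
\[
\frac{\pi^{d/2}}{\Gamma(\frac{d+q}{2})}\int_0^\infty t^{\frac q2-1}\bigl(1-e^{-1/(4t)}\bigr)\,dt .
\]
Substituting $u=1/(4t)$ turns this into $4^{-q/2}\int_0^\infty u^{-q/2-1}(1-e^{-u})\,du=2^{-q}\,\Gamma(1-q/2)/(q/2)$, after integrating by parts. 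Hence
\[
C_{d,q}=\frac{2\pi^{d/2}\Gamma(1-q/2)}{q\,2^q\,\Gamma(\frac{d+q}{2})},
\]
which differs from the stated value $\kappa_{d,q}/2$ by the factor $2/q$. So either the paper's normalization absorbs this factor, or I have slipped in the bookkeeping; I would recheck this step carefully. In any case, the structural identity with some explicit constant is what the argument produces, and the final step would match the constant to the paper's $\kappa_{d,q}$.

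Finally, \eqref{e:isometry} follows by applying \eqref{e:equiv} to $\nu=\delta_x-\delta_y\in\calM_0(\R^d)$. The diagonal terms vanish, and the two cross terms give $-\tfrac{\kappa_{d,q}}{2}\cdot(-2|x-y|^q)=\kappa_{d,q}|x-y|^q$. The observation $\Phi(x)-\Phi(y)=\delta_x-\delta_y$ then completes the proof.
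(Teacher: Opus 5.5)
Your argument is correct, and so is your constant. The factor $2/q$ you flag is not a bookkeeping slip on your side; it is an error in the statement. With the paper's conventions there is nothing that could absorb it: the Fourier transform \eqref{eq:FT-measure} carries no $2\pi$ factors, and \eqref{e:norma sobolev negativa}, read as the definition of the squared norm $\int|\widehat{\nu}|^2|\xi|^{-(d+q)}\,d\xi$ as the rest of the paper does, has no prefactor.

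Your constant
\[
C_{d,q}=\frac{2\pi^{d/2}\,\Gamma(1-\frac{q}{2})}{q\,2^q\,\Gamma(\frac{d+q}{2})}
\]
is exactly the Sz\'ekely--Rizzo constant, whereas the paper's $\kappa_{d,q}$ equals $q\,C_{d,q}$. The true identities are therefore
\[
\|\nu\|^2_{H^{-s}_0}=-\frac{\kappa_{d,q}}{q}\iint_{\R^d\times\R^d}|x-y|^q\,d\nu(x)\,d\nu(y),\qquad \|\delta_x-\delta_y\|^2_{H^{-s}_0}=\frac{2\kappa_{d,q}}{q}\,|x-y|^q .
\]
So \eqref{e:equiv}--\eqref{e:isometry} hold verbatim only after redefining $\kappa_{d,q}:=2C_{d,q}$. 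A quick check: for $d=q=1$ one has $\int_{\R}(2-2\cos(\xi z))\,\xi^{-2}\,d\xi=2\pi|z|$, while $\kappa_{1,1}|z|=\pi|z|$.

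Hence your final step, ``match the constant to the paper's $\kappa_{d,q}$'', cannot be carried out. You should instead state the result with the corrected constant. Everything else is sound: the $1-\cos$ rewriting using $\nu(\R^d)=0$, the Tonelli justification via the nonnegative integrand, rotation and scaling, the subordination and Gaussian evaluation, and the specialization to $\nu=\delta_x-\delta_y$.

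As for the route, the paper gives no argument of its own. It only points to Schoenberg, Berg--Christensen--Ressel, Sz\'ekely--Rizzo (Proposition~1) and Fornasier--H\"utter (Theorem~3.6, which goes through Wendland's generalized Fourier transform of the conditionally positive definite function $|\cdot|^q$). Your proof is the elementary Sz\'ekely--Rizzo computation written out. It is self-contained, avoids generalized Fourier transforms, and needs only that $\nu$ is finite with $\nu(\R^d)=0$ and $\int|x|^q\,d|\nu|<\infty$; compact support is more than enough. Crucially here, it produces the explicit constant, which is exactly where the paper's transcription went wrong.

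Downstream, $\kappa_{d,q}$ enters only through \eqref{e:equiv}--\eqref{e:isometry}, so correcting it changes only multiplicative constants. Note, though, that Theorem~\ref{thm:W1} must then be read with the corrected $\kappa_{d,q}$: with the stated one it already fails for two Dirac masses.
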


\begin{proof}
The statement is a particular case of the work of Sch\"onberg \cite{schoenberg1938positive}. The equivalence \eqref{e:equiv} between the energy distance
and its Fourier representation
appeared multiple times in the literature in the course of the years. A first address is the book by Berg--Christensen--Ressel~\cite[Chapter 3, Section 2, Corollary 2.10]{BergChristensenRessel1984}.
It has been also informally derived in
Sz\'ekely's 1989 lecture notes~\cite{szekely1989} with an  explicit proof
(for $d=1$) is in the 2002 BGSU technical report~\cite{szekely2002}, followed by a peer-reviewed proof for arbitrary $d$ and $q=1$ is in~\cite{szekely2005}.
The full result for $0<q<2$ and any $d\geq 1$, together with the explicit
constant $\kappa_{d,q}$, appears again as Proposition~1 of~\cite{szekely2013energy}.
This result has been redescovered independently by Fornasier-Hütter in \cite[Theorem 3.6]{fornasier2014consistency} using the theory of conditionally positive semidefinite functions and the generalized Fourier transform following the book \cite{wendland2005}. One more proof of this result can be found in the paper \cite[Theorem 1]{auricchio2026kinetic}.
\end{proof}

\section{Quantitative Rate of Convergence of Empirical Measures}
We fix now $\omega\in \mathcal{P}(\R^d)$ such that there exist $C_\omega > 1,R_0>0$ and $\expAhlfors\in (0,d]$ with
\begin{equation}\tag{Ahl}\label{e:alfhors reg}
C^{-1}_\omega r^\expAhlfors \leq \omega(B_r(x)) \leq C_\omega r^\expAhlfors 
\end{equation}
for every $x\in \mathrm{spt}\, \omega$ and every $r\in (0,R_0)$. 
Measures satisfying \eqref{e:alfhors reg} are termed Ahlfors regular
in the literature. In particular, standard density estimates (cf. \cite[Theorem 2.56]{AFP}) imply that for every Borel set $B$ in 
$\mathrm{spt}\, \omega$
\[
C^{-1}_\omega \mathcal{H}^\expAhlfors(B)\leq\omega(B)\leq
2^\expAhlfors C_\omega \mathcal{H}^\expAhlfors(B)\,,
\]
where $C_\omega$ is the constant appearing in \eqref{e:alfhors reg}.
If $\expAhlfors\in\N\cap(0,d]$, then $\mathcal{H}^\expAhlfors\res E$ obeys \eqref{e:alfhors reg} provided that $E$ is $\expAhlfors$-rectifiable.
For general exponents $\beta$ as above, self-similar sets satisfying the open set condition according to \cite{Hutchinson81}, such as the Cantor set in $\R$ or the Sierpinski gasket, support Ahlfors regular measures.

\begin{remark}\label{r:supporto compatto}
Property \eqref{e:alfhors reg} implies that $\textup{spt}\, \omega$ must be compact. Indeed, if the support were not totally bounded, we could find a radius $\rho \in (0,R_0)$ and an infinite sequence of points $\{x_j\}_{j\in \N} \subset \textup{spt}\, \omega$ satisfying
\begin{equation*}
    \|x_i - x_j\| \geq 2\rho \quad \text{for all } i \neq j.
\end{equation*}
This would imply that the open balls $\{B_{\rho}(x_j)\}_{j\in \N}$ are pairwise disjoint. As a result, we would obtain
\begin{equation*}
    1 = \omega(\R^d) \geq \omega\left(\bigcup_{j=1}^\infty B_{\rho}(x_j)\right) = \sum_{j=1}^\infty \omega(B_{\rho}(x_j)) \geq C^{-1}_\omega\sum_{j=1}^\infty \rho^\expAhlfors = \infty,
\end{equation*}
leading to a contradiction.
\end{remark}

\subsection{Lower bound}\label{sec:lower bound}
The aim of this section is to show the lower bound in Theorem\ref{t:main-intro}. We stress out that no connected assumption on $\textup{spt} \; \omega$ is needed here. We argue by duality of $\mathcal{M}_0(\mathbb{R}^d)$ with $\mathcal{S}(\R^d,\mathbb{C})$ (cf. Proposition \ref{p:dual-norm-exact}): we test $\mu_N-\omega$ with a sum of bumps functions that vanishes on $\mu_N$, keeping the $H^s_0$-norm controlled by scaling and separation.

\begin{lemma}\label{l:palle vuote}
Assume condition \eqref{e:alfhors reg}. Then there exist constants $\alpha\in(0,\infty)$ and $N_0\in\N$
(depending only on $n,C,R_0$) with the following property: for every $N\ge N_0$ and every sequence of points $\{x_j\}_{j=1}^N$ in $\R^d$, there exists a sequence of points $\{z_j\}_{j=1}^N$ in  
$\mathrm{spt}\,\omega$ 
such that the balls $\{B_{\rho_N}(z_j)\}_{j=1}^N$ are pairwise disjoint and 
\begin{equation}\label{e:palle vuote}
B_{\rho_N}(z_j)\cap\{x_1,\dots,x_N\}=\emptyset \qquad\forall j=1,\dots,m,
\end{equation}
where $\rho_N:=\alpha N^{-\frac1\expAhlfors}$.
\end{lemma}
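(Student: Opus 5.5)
The plan is a packing-and-pigeonhole argument based on the two-sided Ahlfors bound \eqref{e:alfhors reg}. First I build a large well-separated family of candidate centres in $\mathrm{spt}\,\omega$. Then I discard those whose balls contain one of the points $x_i$.

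\textbf{Step 1: many separated centres.} Fix $r=\rho_N=\alpha N^{-1/\beta}$, with $\alpha$ to be chosen. Take a maximal family $\{y_k\}_{k=1}^M\subset\mathrm{spt}\,\omega$ with $|y_k-y_l|\ge 2r$ for $k\ne l$; it is finite by Remark~\ref{r:supporto compatto}. The balls $B_r(y_k)$ are pairwise disjoint. By maximality, the balls $B_{2r}(y_k)$ cover $\mathrm{spt}\,\omega$. If $2r<R_0$, the upper bound in \eqref{e:alfhors reg} gives
\[
1=\omega(\mathrm{spt}\,\omega)\le \sum_{k=1}^M\omega(B_{2r}(y_k))\le M\,C_\omega 2^\beta r^\beta ,
\]
hence $M\ge (C_\omega 2^\beta\alpha^\beta)^{-1}N$.

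\textbf{Step 2: pigeonhole.} The balls $B_r(y_k)$ are disjoint, so each point $x_i$ lies in at most one of them. Hence at most $N$ of the $M$ balls meet $\{x_1,\dots,x_N\}$, and at least $M-N$ balls are empty of the $x_i$. To get at least $N$ empty disjoint balls, it suffices that $M\ge 2N$. By Step 1 this holds if
\[
\alpha^\beta\le \frac{1}{2^{\beta+1}C_\omega}.
\]
Fix such an $\alpha$. Choose $N_0$ so that $2\rho_N=2\alpha N^{-1/\beta}<R_0$ for all $N\ge N_0$, which validates Step 1. Select $N$ of the empty balls and call their centres $z_1,\dots,z_N\in\mathrm{spt}\,\omega$. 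These satisfy \eqref{e:palle vuote} and are pairwise disjoint, and $\alpha$, $N_0$ depend only on $\beta$, $C_\omega$ and $R_0$.

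The argument only uses the upper Ahlfors bound. The lower bound is what makes the resulting test functions in the later duality argument carry mass $\gtrsim \rho_N^\beta\sim 1/N$, but it is not needed for this lemma.

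One subtlety is whether "empty" should also exclude the boundary, or whether slightly larger balls should be disjoint. The later bump construction will probably want the stronger form that the enlarged balls $B_{2\rho_N}(z_j)$ are disjoint, or at least well separated. I would handle this by running the same argument with separation $4r$ instead of $2r$, which only changes the constant in $\alpha$. This bookkeeping of constants is the only delicate point, and I do not expect a real obstacle.
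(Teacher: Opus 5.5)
Your proof is correct and is essentially the paper's argument. You take a maximal separated subset of $\mathrm{spt}\,\omega$, cover by maximality, use the upper Ahlfors bound to get $M\ge 2N$, and then apply pigeonhole to the disjoint half-radius balls; your choice $\alpha^\beta=(2^{\beta+1}C_\omega)^{-1}$ is exactly the paper's $\alpha=\frac12(2C_\omega)^{-1/\beta}$. Your closing worry about extra separation is unnecessary: disjointness of the open balls already gives $|z_j-z_k|\ge 2\rho_N$, and the lower-bound theorem simply shrinks to $r_N=\rho_N/4$ to get the $4r_N$-separation needed in Lemma~\ref{l:almost-orth}.
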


\begin{proof}
Fix $N$ large enough so that
\begin{equation}\label{eq:r-choice}
r_N:= (2C_\omega N)^{-\frac1\expAhlfors} \leq \frac{R_0}{2}.
\end{equation}
This is possible for all $N\ge N_0$ with $N_0$ depending only on $C_\omega,R_0$. Let $Z=\{z_1,\dots,z_M\}\subset\mathrm{spt}\,\omega$ be a maximal $r$-separated set, i.e.
$|z_j-z_k|\ge r_N$ for $j\neq k$, and maximal under inclusion. We notice that $Z$ must be finite because of assumption \eqref{e:alfhors reg}, $\omega \in \mathcal{P}(\R^d)$ and the balls $\{B_{\frac{r_N}{2}}(z_j)\}_{j=1}^M$ are pairwise disjoint.

Maximality condition implies the covering $\textup{spt} \; \omega\subset\bigcup_{j=1}^M B_{r_N}(z_j)$:
indeed, if there were $x\in\textup{spt} \;\omega$ with $|x-z_j|\ge r$ for all $j$, then $Z\cup\{x\}$ would still be $r$-separated,
contradicting maximality.

Using this cover and assumption \eqref{e:alfhors reg}, we obtain
\begin{equation*}
1=\omega(\textup{spt} \;\omega)\le \sum_{j=1}^M \omega(B_{r_N}(z_j))\le M\,C_\omega r_N^\expAhlfors,    
\end{equation*}
thanks to \eqref{eq:r-choice}.
Thus $M\geq 2N$. 

Consider the disjoint balls $\{B_{\frac{r_N}{2}}(z_j)\}_{j=1}^M$.
Since they are disjoint, each point $x_j$ can belong to at most one such ball,
hence at most $N$ of these balls contain at least one sample point $x_j$.
Because $M\ge 2N$, at least $M-N\ge N$ balls are empty of sample points.
Relabel $N$ of these centers as $z_1,\dots,z_N$, and upon setting
$\rho_N := \frac{r_N}{2} $ 
we obtain \eqref{e:palle vuote} with $\alpha:=\frac12(2C_\omega)^{-\frac1\expAhlfors}$.
\end{proof}

Fix a function $\psi\in C_c^\infty(B_1(0))$ such that
\begin{equation}\label{e:psi-choice}
0\le \psi\le 1,\qquad \psi= 1\ \text{on }B_{\frac12}(0).
\end{equation}
For $r>0$ and $z\in\R^d$ define the rescaled bump
\begin{equation}\label{e:psi riscalata}
\psi_{r,z}(x):=r^{\,s-\frac d2}\,\psi\!\left(\frac{x-z}{r}\right)=r^{\frac{q}{2}}\,\psi\!\left(\frac{x-z}{r}\right),\qquad x\in\R^d.
\end{equation}

\begin{lemma}\label{l:Hdots-scaling}
For every $r>0$ and $z\in\R^d$ one has
\begin{equation*}
\|\psi_{r,z}\|_{H_0^s}=\|\psi\|_{H_0^s}.
\end{equation*}
\end{lemma}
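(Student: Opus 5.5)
The plan is a direct Fourier-side scaling computation using the definition \eqref{e:norma sobolev negativa} of $\|\cdot\|_{H_0^s}$, namely $\int|\Fhat f(\xi)|^2|\xi|^{2s}\,\dd\xi$. First compute the Fourier transform of $\psi_{r,z}$. Substituting $x=z+ry$ in \eqref{eq:FT-measure} (applied to the absolutely continuous measure $\psi_{r,z}\,\dd x$) gives
\[
\Fhat{\psi_{r,z}}(\xi)=r^{\frac q2}\int_{\R^d}e^{-i\xi\cdot(z+ry)}\psi(y)\,r^d\,\dd y
=r^{\frac q2+d}\,e^{-i\xi\cdot z}\,\Fhat\psi(r\xi).
\]
The phase $e^{-i\xi\cdot z}$ has modulus one, so it disappears in $|\Fhat{\psi_{r,z}}|^2$; this shows translation invariance at once.

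Next insert this into the norm and change variables $\eta=r\xi$, with $\dd\xi=r^{-d}\dd\eta$ and $|\xi|^{2s}=r^{-2s}|\eta|^{2s}$:
\[
\|\psi_{r,z}\|_{H_0^s}=r^{q+2d}\int_{\R^d}|\Fhat\psi(r\xi)|^2|\xi|^{2s}\,\dd\xi
=r^{q+2d-2s-d}\int_{\R^d}|\Fhat\psi(\eta)|^2|\eta|^{2s}\,\dd\eta .
\]
Since $2s=d+q$, the exponent is $q+2d-(d+q)-d=0$, which yields $\|\psi_{r,z}\|_{H_0^s}=\|\psi\|_{H_0^s}$. Finiteness of both sides holds because $\psi\in C_c^\infty\subset\mathcal S$, as remarked after \eqref{e:norma sobolev negativa}.

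There is no real obstacle here. The only point requiring care is the bookkeeping of exponents: the prefactor $r^{s-d/2}=r^{q/2}$ in \eqref{e:psi riscalata} was chosen precisely so that the homogeneity $r^{2(s-d/2)}\cdot r^{2d}\cdot r^{-d}\cdot r^{-2s}$ cancels. The identity holds with the paper's convention that $\|\cdot\|_{H_0^s}$ denotes the squared integral. Under the square-root convention it also holds, since both sides are simply square-rooted.
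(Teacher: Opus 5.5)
Your proposal is correct and is essentially the paper's proof: compute $\widehat{\psi_{r,z}}(\xi)=r^{q/2+d}e^{-i\xi\cdot z}\,\widehat{\psi}(r\xi)$ (the paper writes this prefactor as $r^{s+d/2}$), drop the unimodular phase, and substitute $\eta=r\xi$, so that the powers of $r$ cancel because $2s=d+q$. Your remark about conventions is also apt: the paper's definition \eqref{e:norma sobolev negativa} omits the square root, while its proof writes the squared norm, and the identity holds either way.
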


\begin{proof}
We compute the Fourier transform of $\psi_{r,z}$.
Using the change of variables $y=(x-z)/r$, we have
\begin{equation}\label{e:fourier per riscalata}
\Fhat{\psi_{r,z}}(\xi)
=\int_{\R^d} e^{-i\xi\cdot x}\,r^{\,s-\frac d2}\psi \left(\frac{x-z}{r}\right)\,\dd x
=r^{\,s-\frac d2}r^d \int_{\R^d} e^{-i\xi\cdot(z+r y)}\,\psi(y)\,\dd y
=r^{\,s+\frac d2}\,e^{-i\xi\cdot z}\,\Fhat{\psi}(r\xi).
\end{equation}
Hence
\begin{equation*}
\|\psi_{r,z}\|_{H_0^s}^2
=\int_{\R^d} |\xi|^{2s}\,|\Fhat{\psi_{r,z}}(\xi)|^2\,\dd\xi
=\int_{\R^d} |\xi|^{2s}\,r^{2s+d}\,|\Fhat{\psi}(r\xi)|^2\,\dd\xi.
\end{equation*}
Change variables $\eta=r\xi$, so $d\xi=r^{-d}d\eta$ and $|\xi|^{2s}=r^{-2s}|\eta|^{2s}$:
\begin{equation*}
\|\psi_{r,z}\|_{H_0^s}^2
=\int_{\R^d} r^{-2s}|\eta|^{2s}\,r^{2s+d}\,|\Fhat{\psi}(\eta)|^2\,r^{-d}\,\dd\eta
=\int_{\R^d} |\eta|^{2s}|\Fhat{\psi}(\eta)|^2\,\dd\eta
=\|\psi\|_{H_0^s}^2.
\end{equation*}

\end{proof}

\begin{lemma}\label{l:conteggio}
Let $\delta\in (0,4]$ and let $Y\subset\R^d$ be $\delta$-separated, i.e. $
|y-y'|\ge \delta \text{ for all distinct }y,y'\in Y$.
Then for every $R>0$,
\begin{equation*}
\#(Y\cap B_R(0)) \le 2^d\left(\frac{R}{\delta}+1\right)^d.   \end{equation*}
Moreover, there exists a constant $C_d>0$, depending only on $d$, such that for every integer $m\geq 3$,
\begin{equation*}
\#\bigl(Y\cap\{y\in\R^d:m\le |y|<m+1\}\bigr)
\le C_d\,\delta^{-d}\,m^{d-1}.    
\end{equation*}
\end{lemma}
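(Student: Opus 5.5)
The plan is a standard volume-packing argument: disjoint balls around the points of $Y$, counted against the volume of the region that contains them. Since $Y$ is $\delta$-separated, the open balls $B_{\delta/2}(y)$, $y\in Y$, are pairwise disjoint.

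\emph{First bound.} For $y\in Y\cap B_R(0)$, the ball $B_{\delta/2}(y)$ lies in $B_{R+\delta/2}(0)$. Comparing Lebesgue measures gives
\[
\#(Y\cap B_R(0))\,\omega_d(\delta/2)^d\le \omega_d(R+\delta/2)^d,
\]
where $\omega_d=\mathcal L^d(B_1)$. Hence
\[
\#(Y\cap B_R(0))\le \Bigl(\tfrac{2R}{\delta}+1\Bigr)^d\le 2^d\Bigl(\tfrac{R}{\delta}+1\Bigr)^d .
\]
This step is immediate, and the restriction $\delta\le 4$ plays no role here.

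\emph{Second bound.} For $y$ in the annulus $A_m=\{m\le|y|<m+1\}$, the ball $B_{\delta/2}(y)$ lies in the thickened annulus
\[
\{\,m-\delta/2\le|x|<m+1+\delta/2\,\}\subset\{\,m-2\le|x|< m+3\,\},
\]
using $\delta\le4$. Since $m\ge3$, the inner radius $m-2$ is at least $1$. The volume of the enlarged annulus is $\omega_d\bigl((m+3)^d-(m-2)^d\bigr)$. By the mean value theorem this is at most $5d\,\omega_d(m+3)^{d-1}\le 5d\,\omega_d\,2^{d-1}m^{d-1}$, using $m+3\le 2m$ for $m\ge3$. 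Dividing by $\omega_d(\delta/2)^d$ yields
\[
\#(Y\cap A_m)\le 5d\,2^{2d-1}\,\delta^{-d}m^{d-1},
\]
which is the claim with $C_d=5d\,2^{2d-1}$.

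The only point needing care is the second bound. The balls around points of the annulus must be absorbed into a shell whose volume scales like $m^{d-1}$ with a constant uniform in $\delta$. This is exactly where $\delta\le4$ and $m\ge3$ are used: they keep the enlarged shell of bounded thickness and away from the origin.
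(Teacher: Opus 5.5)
Your proof is correct and is essentially the paper's argument: you compare the disjoint balls $B_{\delta/2}(y)$ by volume, first against $B_{R+\delta/2}(0)$ and then against a thickened shell whose volume is estimated with the mean value theorem. The only cosmetic difference is that you first enlarge the shell to the $\delta$-independent region $\{m-2\le|x|<m+3\}$ before applying the mean value theorem, whereas the paper applies it directly to $\{m-\delta/2<|x|<m+1+\delta/2\}$ and then uses $1+\delta\le 5$ and $\xi_m\le 2m$; both routes give the same bound, and the paper simply records the slightly larger constant $5d\cdot 4^d$.
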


\begin{proof}
Consider the family of open balls $\{B_{\frac{\delta}{2}}(y)\}_{y\in Y}$. Since $Y$ is $\delta$-separated, these balls are pairwise disjoint. Now letting $R>0$ we have that $B_{\frac{\delta}{2}}(y)\subset B_{R+\delta/2}(0)$ and therefore
\begin{equation*}
\#(Y\cap B_R(0))\,\mathcal L^d(B_{\frac{\delta}{2}}(0))
\le \mathcal L^d(B_{R+\frac{\delta}{2}}(0)).    
\end{equation*}
Setting $\omega_d:=\mathcal L^d(B_1(0))$, we conclude
\begin{equation*}
    \#(Y\cap B_R(0))
\le \frac{\omega_d(R+\delta/2)^d}{\omega_d(\delta/2)^d}
= \left(\frac{2R}{\delta}+1\right)^d\leq 2^d\left(\frac{R}{\delta}+1\right)^d.
\end{equation*}

Fix now an integer $m\ge 3$ and set
\begin{equation*}
    A_m:=Y\cap\{y\in\R^d:m\le |y|<m+1\}.
\end{equation*}
Let $y\in A_n$ and  $x\in B_{\frac{\delta}{2}}(y)$. Since $|y|<m+1$, we have
\begin{equation*}
|x|\le |x-y|+|y|<\delta/2+m+1,    
\end{equation*}
while since $|y|\ge m$, we also have
\begin{equation*}
|x|\ge |y|-|x-y|>m-\delta/2.
\end{equation*}
Therefore
\begin{equation*}
B_{\frac{\delta}{2}}(y)\subseteq B_{m+1+\delta/2}(0)\setminus B_{m-\delta/2}(0).
\end{equation*}
Since the balls $\{B_{\frac{\delta}{2}}(y)\}_{y\in A_m}$ are pairwise disjoint, we obtain
\begin{equation*}
\#(A_m)\,\mathcal L^d(B_{\frac{\delta}{2}}(0))
\le
\mathcal L^d\bigl(B_{m+1+\delta/2}(0)\setminus B_{m-\delta/2}(0)\bigr).
\end{equation*}
Hence
\begin{equation*}
\#(A_m)
\le
\frac{\omega_d\bigl((m+1+\delta/2)^d-(m-\delta/2)^d\bigr)}
{\omega_d(\delta/2)^d}
=
\frac{(m+1+\delta/2)^d-(m-\delta/2)^d}{(\delta/2)^d}.    
\end{equation*}
By the mean value theorem, there exists
$\xi_m\in (\,m-\delta/2,\; m+1+\delta/2\,)$
such that
\begin{equation*}
(m+1+\delta/2)^d-(m-\delta/2)^d
=
d\,\xi_m^{\,d-1}(1+\delta)\leq 5d\,\xi_m^{\,d-1} .    
\end{equation*}
Since $m\ge 3$, we have
\begin{equation*}
\xi_m\le m+1+\delta/2 \le 2\, m
\end{equation*}
and therefore
\begin{equation*}
\#(A_m)\le C_d\,\delta^{-d}\,m^{d-1},
\end{equation*}
where $C_d:=5d \cdot4^d $.
\end{proof}

\begin{lemma}\label{l:almost-orth}
There exists a constant $\Fhat{C}$, depending only on $\psi$, $d$ and $s$, such that for every $N\in \N$, $r>0$, and $z_1,\dots,z_N\in\R^d$ satisfy the separation condition
\begin{equation}\label{e:separation}
|z_j-z_k|\ge 4r\qquad\text{for all }j\neq k,
\end{equation}
we have that
\begin{equation}\label{eq:almost-orth-bound}
\left\|\sum_{j=1}^N \psi_{r,z_j}\right\|_{H_0^s}^2 \le \Fhat{C}\,N.
\end{equation}
\end{lemma}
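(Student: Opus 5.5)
Expanding the square gives
\[
\Bigl\|\sum_j \psi_{r,z_j}\Bigr\|_{H_0^s}^2=\sum_{j}\|\psi_{r,z_j}\|_{H_0^s}^2+\sum_{j\neq k}\int_{\R^d}|\xi|^{2s}\,\Fhat{\psi_{r,z_j}}(\xi)\,\overline{\Fhat{\psi_{r,z_k}}(\xi)}\,\dd\xi .
\]
By Lemma~\ref{l:Hdots-scaling} the diagonal part equals $N\|\psi\|_{H_0^s}^2$. The task is therefore to show that the off-diagonal interactions sum to $O(N)$. By \eqref{e:fourier per riscalata} and the substitution $\eta=r\xi$, each cross term equals
\[
\int|\eta|^{2s}|\Fhat\psi(\eta)|^2 e^{-i\eta\cdot (z_j-z_k)/r}\,\dd\eta=:G\Bigl(\frac{z_j-z_k}{r}\Bigr),
\]
where $G$ is the inverse Fourier transform (up to $(2\pi)^d$) of $|\eta|^{2s}|\Fhat\psi|^2$. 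In physical space, $G=c\,(-\Delta)^{s/2}\psi * (-\Delta)^{s/2}\tilde\psi$ with $\tilde\psi(x)=\psi(-x)$. The point set $y_j:=z_j/r$ is $4$-separated by \eqref{e:separation}, and the bound we need is independent of $r$. It thus suffices to prove $\sum_{k\neq j}|G(y_j-y_k)|\le C$ uniformly in $j$.

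The key step is a decay estimate $|G(x)|\le C|x|^{-(d+2s)}=C|x|^{-(2d+q)}$ for $|x|\ge 2$. For $|x|\ge 2$ the supports of $\psi(\cdot)$ and $\psi(\cdot-x)$ (both in unit balls) are disjoint. I would write $G(x)=\langle (-\Delta)^{s}\psi,\psi(\cdot-x)\rangle$, up to a constant, and use the singular-integral representation of the fractional Laplacian off the support: for $y\notin \overline{B_1}$,
\[
(-\Delta)^{s}\psi(y)=c_{d,s}\int\frac{-\psi(w)}{|y-w|^{d+2s}}\,\dd w .
\]
When $s$ is not an integer this holds directly. When $s$ is an integer, $(-\Delta)^s\psi$ is local and $G(x)=0$ for $|x|\ge2$, which is even better. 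For $2s>2$ the hypersingular formula needs care, so more safely I would argue in Fourier space: $|\eta|^{2s}|\Fhat\psi|^2$ is smooth away from $0$, Schwartz at infinity, and has a homogeneous singularity of degree $2s$ at the origin. Standard results (e.g.\ integrate by parts $d+\lceil 2s\rceil$ times with a dyadic cutoff near $0$) give decay $|x|^{-d-2s}$. I expect this to be the main technical obstacle. The disjoint-support representation via the explicit kernel of $(-\Delta)^s$ for noninteger $s$, namely $c|y|^{-d-2s}$ away from the origin, is what I would try first, since $G(x)=\iint K_{2s}(x+u-w)\psi(u)\psi(w)\,\dd u\,\dd w$ with $|x+u-w|\ge |x|/2$.

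Given the decay, split the sum over $k\neq j$ by $|y_j-y_k|$. For $4\le|y_j-y_k|<4$ there is nothing to count; all separations are at least $4$. I use $|G|\le \|G\|_\infty\le \|\psi\|_{H_0^s}^2$ only for the boundedly many points with $|y_j-y_k|<3$, whose number is bounded by the first part of Lemma~\ref{l:conteggio} (here there are none). For the shells $m\le|y_k-y_j|<m+1$ with $m\ge3$, the second part of Lemma~\ref{l:conteggio} with $\delta=4$, applied to the translated set $Y-y_j$, gives at most $C_d4^{-d}m^{d-1}$ points. Hence
\[
\sum_{k\ne j}|G(y_j-y_k)|\le C\sum_{m\ge3}m^{d-1}m^{-2d-q}<\infty ,
\]
uniformly in $j$ and $N$. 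Summing over $j$ then yields $\|\sum_j\psi_{r,z_j}\|_{H_0^s}^2\le (\|\psi\|_{H_0^s}^2+C)N=:\Fhat C N$, with $\Fhat C$ depending only on $\psi,d,s$.
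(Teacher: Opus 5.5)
Your proof is correct and has the same skeleton as the paper's: expand the square, use Lemma~\ref{l:Hdots-scaling} for the diagonal, write each cross term as a fixed function of $(z_j-z_k)/r$, and sum over shells using Lemma~\ref{l:conteggio}. The real difference is how you get the decay of that function, and on this point your version is the right one.

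The paper asserts that $\eta\mapsto|\eta|^{2s}|\Fhat\psi(\eta)|^2$ is a Schwartz function, and concludes that $\Gamma$ decays faster than any power. This is false in general. Since $\Fhat\psi(0)=\int\psi>0$, that function is not smooth at the origin unless $2s=d+q$ is an even integer, i.e.\ unless $q=1$ and $d$ is odd. In all other cases $\Gamma$ decays only like $|y|^{-(d+2s)}$. The paper's conclusion still holds, because its shell sum only needs decay $|y|^{-M}$ for some $M>d$. Your off-support kernel estimate gives exactly $d+2s=2d+q>d$, so your proof supplies the step the paper gets wrong. The dichotomy you draw is also correct: for integer $s$ you have $\Gamma(x)=0$ when $|x|\ge 2$, and for non-integer $s$ you have $|x|^{-(d+2s)}$ decay.

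A few small points in your write-up:
\begin{itemize}
\item You do not need to hedge about the case $2s>2$, nor fall back on a dyadic Fourier argument. For every non-integer $s$, the tempered distribution with Fourier transform $|\xi|^{2s}$ equals $c_{d,s}|x|^{-d-2s}$ away from the origin. Here $c_{d,s}\neq 0$ is a ratio of Gamma functions and may be negative, but only $|c_{d,s}|$ enters. Since you evaluate $(-\Delta)^s\psi$ only at points at positive distance from $\mathrm{spt}\,\psi$, the representation applies directly.
\item The inequality $|x+u-w|\ge|x|/2$ for $u,w\in B_1$ requires $|x|\ge4$, not $|x|\ge2$. This is harmless: all relevant points satisfy $|y_j-y_k|\ge4$, and on $2\le|x|\le4$ you can use $|G|\le G(0)=\|\psi\|_{H_0^s}^2$ anyway.
\item The phrase ``for $4\le|y_j-y_k|<4$'' should read ``for $|y_j-y_k|<4$''.
\end{itemize}
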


\begin{proof}
By polarization,
\begin{equation}\label{e:pitagora}
\left\|\sum_{j=1}^N \psi_{r,z_j}\right\|_{H_0^s}^2
=
\sum_{j=1}^N \|\psi_{r,z_j}\|_{H_0^s}^2
+
2\sum_{1\le j<k\le N}
\langle \psi_{r,z_j},\psi_{r,z_k}\rangle_{H_0^s},  \end{equation}
where
\begin{equation*}
\langle g,f\rangle_{H_0^s}
:=
\int_{\R^d} |\xi|^{2s}\,\widehat f(\xi)\,\overline{\widehat g(\xi)}\,\dd\xi.
\end{equation*}
By Lemma~\ref{l:Hdots-scaling} we have $\|\psi_{r,z_j}\|_{H_0^s}=\|\psi\|_{H_0^s}$  for every $j=1,\dots,N$ and hence
\begin{equation*}
    \sum_{j=1}^N \|\psi_{r,z_j}\|_{H_0^s}^2
=
N\,\|\psi\|_{H_0^s}^2.
\end{equation*}

From \eqref{e:fourier per riscalata} we have
\begin{equation*}
\widehat{\psi_{r,z}}(\xi)
=
r^{s+\frac d2} e^{-i\xi\cdot z}\widehat\psi(r\xi),    
\end{equation*}
and therefore for $j\neq k$ we obtain:
\begin{align*}
\langle \psi_{r,z_j},\psi_{r,z_k}\rangle_{H_0^s}
&=
\int_{\R^d}
|\xi|^{2s}\,
r^{2s+d}\,
e^{-i\xi\cdot(z_j-z_k)}
|\widehat\psi(r\xi)|^2\,\dd\xi.
\end{align*}
Changing variables $\eta=r\xi$, we get
\begin{equation*}
\langle \psi_{r,z_j},\psi_{r,z_k}\rangle_{H_0^s}
=
\int_{\R^d}
|\eta|^{2s}\,
e^{-i\eta\cdot \frac{z_j-z_k}{r}}
|\widehat\psi(\eta)|^2\,\dd\eta.    
\end{equation*}
Setting
\begin{equation}\label{eq:Gamma-def}
\Gamma(y)
:=
\int_{\R^d}
|\eta|^{2s}|\widehat\psi(\eta)|^2 e^{-i\eta\cdot y}\,\dd\eta,
\qquad y\in\R^d,
\end{equation}
we get
\begin{equation}\label{eq:cross-term-Gamma}
\langle \psi_{r,z_j},\psi_{r,z_k}\rangle_{H_0^s}
=
\Gamma \left(\frac{z_j-z_k}{r}\right).
\end{equation}

Since $\psi\in C_c^\infty(\R^d)$, then $\eta\mapsto |\eta|^{2s}|\widehat\psi(\eta)|^2$ is in $\mathcal{S}(\R^d,\mathbb{C})$ and in particular also $\Gamma \in \mathcal S(\R^d,\mathbb{C})$. Therefore, for every $M\in \N$ there exists a constant $C_M<\infty$ such that
\begin{equation}\label{eq:Gamma-decay}
|\Gamma(y)|\le C_M(1+|y|)^{-M}\qquad\forall y\in\R^d.
\end{equation}

Fix now $j\in\{1,\dots,N\}$ and define
\begin{equation*}
Y_j:=\left\{\frac{z_j-z_k}{r}:k\neq j\right\}\subset\R^d.    
\end{equation*}
We claim that $Y_j$ is $4$-separated. Indeed, if $k\neq k'$ and both are different from $j$, then
\begin{equation*}
\left|\frac{z_j-z_k}{r}-\frac{z_j-z_{k'}}{r}
\right|
=
\frac{|z_k-z_{k'}|}{r}
\ge 4    
\end{equation*}
by \eqref{e:separation}. 

Choose $M>d$. Then by \eqref{eq:Gamma-decay} and Lemma~\ref{l:conteggio} (applied with $\delta=4$),
\begin{align*}
  &  \sum_{k\neq j}
\left|
\Gamma\left(\frac{z_j-z_k}{r}\right)
\right|
=
\sum_{y\in Y_j} |\Gamma(y)|=\sum_{n=0}^\infty
\sum_{y\in Y_j\cap\{m\le |y|<m+1\}} |\Gamma(y)| \\ & \leq  C_M\#\bigl(Y_j\cap B_3(0)\bigr)+  C_M
\sum_{m=3}^\infty
\#\bigl(Y_j\cap\{m\le |y|<m+1\}\bigr)(1+m)^{-M} \\ & \leq 4^d+ C_MC_d \sum_{m=3} \frac{m^{d-1}}{(1+m)^M}<\infty.
\end{align*}
 Hence there exists a constant $K$, that depens only on $\psi$, $d$ and $s$, such that for every $j=1,\dots,N$
\begin{equation*}
\sum_{k\neq j}^N
\left|
\langle \psi_{r,z_j},\psi_{r,z_k}\rangle_{H_0^s}
\right|
\le K.
\end{equation*}

In particular
\begin{equation*}
\sum_{1\le j<k\le N}
\left|
\langle \psi_{r,z_j},\psi_{r,z_k}\rangle_{H_0^s}
\right|
=
\frac12
\sum_{j=1}^N
\sum_{k\neq j}
\left|
\langle \psi_{r,z_j},\psi_{r,z_k}\rangle_{H_0^s}
\right|
\le
\frac12\,N\,K.    
\end{equation*}
Combining this with \eqref{e:pitagora} we obtain
\begin{equation*}
\left\|\sum_{j=1}^N \psi_{r,z_j}\right\|_{H_0^s}^2
\le
N\|\psi\|_{H_0^s}^2 + 2\cdot \frac12\,N\,K
=
N\bigl(\|\psi\|_{H_0^s}^2 + K\bigr).    
\end{equation*}
Therefore \eqref{eq:almost-orth-bound} holds with
\begin{equation*}
\Fhat{C}:=\|\psi\|_{H_0^s}^2 + K.    
\end{equation*}
\end{proof}

\begin{theorem}\label{t:lower bound}
Assume \eqref{e:alfhors reg}. Then there exist constants $c_L>0$ and $N_1\in\N$ such that for every $N\ge N_1$ and every empirical measure $\mu_N=\frac1N\sum_{i=1}^N\delta_{x_i}$,
\begin{equation}\label{eq:Hdots-lower}
\|\mu_N-\omega\|_{H_0^{-s}} \ \ge\ c_L\,N^{-\frac{1}{2}(1+\frac{q}{\expAhlfors})}.
\end{equation}
\end{theorem}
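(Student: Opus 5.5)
We argue by duality, using Proposition~\ref{p:dual-norm-exact} with $\nu=\mu_N-\omega\in\calM_0(\R^d)$. This is legitimate because $\omega$ has compact support by Remark~\ref{r:supporto compatto}. The test function is
\[
f:=\sum_{j=1}^N\psi_{\rho_N/2,z_j}.
\]
Here the points $z_j\in\mathrm{spt}\,\omega$ and the radius $\rho_N=\alpha N^{-1/\beta}$ are supplied by Lemma~\ref{l:palle vuote}, and $\psi$ is the bump fixed in \eqref{e:psi-choice}.

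We first check the separation hypothesis of Lemma~\ref{l:almost-orth}. Set $r:=\rho_N/2$. The balls $B_{\rho_N}(z_j)$ are pairwise disjoint, which only gives $|z_j-z_k|\ge 2\rho_N=4r$. That is exactly \eqref{e:separation} with this $r$. (If one worries about open versus closed balls, one can instead use $r=\rho_N/4$; only constants change.)

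Lemma~\ref{l:almost-orth} then gives $\|f\|_{H_0^s}\le \sqrt{\Fhat C N}$. Let $g:=f/\sqrt{\Fhat C N}$, so that $\|g\|_{H_0^s}\le1$.

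Next we evaluate the pairing. Each summand $\psi_{r,z_j}$ is supported in $B_r(z_j)\subset B_{\rho_N}(z_j)$, and by \eqref{e:palle vuote} none of these balls contains an atom $x_i$. Hence $\langle f,\mu_N\rangle=0$. For the other term, $\psi_{r,z_j}\ge0$ and $\psi_{r,z_j}=r^{q/2}$ on $B_{r/2}(z_j)$. Since $z_j\in\mathrm{spt}\,\omega$ and $r/2<R_0$ for $N$ large, \eqref{e:alfhors reg} gives
\[
\langle f,\omega\rangle\ \ge\ \sum_{j=1}^N r^{q/2}\,\omega(B_{r/2}(z_j))\ \ge\ N\,r^{q/2}\,C_\omega^{-1}(r/2)^{\beta}.
\]
Substituting $r\asymp N^{-1/\beta}$, this is of order
\[
N\cdot N^{-q/(2\beta)}\cdot N^{-1}=N^{-q/(2\beta)}.
\]
Therefore
\[
|\langle g,\mu_N-\omega\rangle|\ \gtrsim\ N^{-1/2}N^{-q/(2\beta)}=N^{-\frac12(1+\frac q\beta)}.
\]

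Proposition~\ref{p:dual-norm-exact} now yields
\[
(2\pi)^{-d}\|\mu_N-\omega\|_{H_0^{-s}}\ \ge\ |\langle g,\mu_N-\omega\rangle|,
\]
which is \eqref{eq:Hdots-lower} with
\[
c_L=(2\pi)^{d}\,C_\omega^{-1}\,2^{-\beta}\,(\alpha/2)^{q/2+\beta}\,\Fhat C^{-1/2}.
\]
We take $N_1\ge N_0$ large enough that $r/2<R_0$. One side remark: $f$ is real valued and belongs to $\mathcal S$ as a finite sum of $C_c^\infty$ functions, so the pairing is real and the absolute value causes no issue.

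The key difficulty is already isolated in Lemma~\ref{l:almost-orth}: the uniform almost-orthogonality bound $\|f\|^2_{H_0^s}\lesssim N$, independent of $r$. This is what makes the $N^{-1/2}$ normalization work. The only point requiring care here is matching the separation $4r$ to the disjointness of the balls from Lemma~\ref{l:palle vuote}, and we resolve it by the choice of $r$ above.
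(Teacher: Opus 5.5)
Your proof is correct and is essentially the paper's argument. It uses duality via Proposition~\ref{p:dual-norm-exact}, a sum of rescaled bumps centred at the empty balls of Lemma~\ref{l:palle vuote}, the scale-free almost-orthogonality bound of Lemma~\ref{l:almost-orth}, and the lower Ahlfors bound on $\omega(B_{r/2}(z_j))$. The only difference is the choice of scale: you take $r=\rho_N/2$, which is admissible because pairwise disjoint open balls of radius $\rho_N$ force $|z_j-z_k|\ge 2\rho_N=4r$, while the paper uses the more conservative $r_N=\rho_N/4$; your explicit constant $c_L$ is computed correctly for your choice.
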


\begin{proof}
Fix $N\in \N$, to be chosen large enough along the way, and an arbitrary configuration $x_1,\dots,x_N\in\Omega$.
Let $\nu:=\mu_N-\omega\in\calM_0(\R^d)$. Apply Lemma~\ref{l:palle vuote} to obtain $N$ pairwise disjoint balls
$B_{\rho_N}(z_1),\dots,B_{\rho_N}(z_N)$ empty of sample points, thus setting
\begin{equation}\label{e:r-choice}
r_N := \frac{\rho_N}{4}=\frac{\alpha}{4}N^{-\frac1\expAhlfors}.
\end{equation}
the family $\{B_{r_N}(z_j)\}_{j=1}^N$ is empty of sample points as well.

Define
\begin{equation}\label{e:f-def}
f(x):=\sum_{j=1}^N \psi_{r_N,z_j}(x),
\end{equation}
with $\psi_{r,z}$ as in \eqref{e:psi riscalata}.
Since $\mathrm{spt}\,\psi_{r_N,z_j}\subseteq B_{r_N}(z_j)$, we have $f(x_i)=0$ for every $i$, hence
\begin{equation}\label{e:f-muN}
\int_{\R^d} f\,\dd \mu_N = \frac1N\sum_{i=1}^N f(x_i)=0.
\end{equation}

Using \eqref{e:psi-choice} and the definition \eqref{e:psi riscalata},
for $x\in B_{\frac{r_N}{2}}(z_j)$ we have $(x-z_j)/r_N\in B_{\frac12}(0)$ and thus $\psi((x-z_j)/r_{N})= 1$.
Hence
\begin{equation*}
\psi_{r_N,z_j}(x) = r_N^{\,s-\frac d2}\qquad\text{for }x\in B_{\frac{r_N}{2}}(z_j).
\end{equation*}
Therefore, being $\omega$ a positive measure
\begin{equation*}
\int f\,\dd\omega
=\sum_{j=1}^N\int \psi_{r_N,z_j}\,\dd\omega
\ge \sum_{j=1}^N \int_{B_{\frac{r_N}{2}}(z_j)} r_N^{\,s-\frac d2}\,\dd\omega
=r_N^{\,s-\frac d2}\sum_{j=1}^m \omega(B_{\frac{r_N}{2}}(z_j)).     
\end{equation*}
By Ahlfors regularity \eqref{e:alfhors reg} and $z_j\in\mathrm{spt}\,\omega$ (cf. Lemma~\ref{l:palle vuote}), for $N$ large enough so that $r_N\le R_0$, we have
\begin{equation*}
\omega(B_{\frac{r_N}{2}}(z_j))\ge C^{-1}_{\omega} \left(\frac{r_N}2\right)^\expAhlfors.
\end{equation*}
Thus
\begin{equation*}
\int f\,\dd\omega
\ge r_N^{\,s-\frac d2}\,N\,C^{-1}_\omega (r_N/2)^\expAhlfors
= C^{-1}_\omega\left(\frac{ \alpha}{2}\right)^\expAhlfors N^{-\frac1\expAhlfors(\,s-\frac d2)}= C^{-1}_\omega \left(\frac{ \alpha}{2}\right)^\expAhlfors N^{-\frac{q}{2\expAhlfors }}.
\end{equation*}

By Lemma~\ref{l:almost-orth}, since $|z_j-z_i|\geq 4r_N$ holds for every $j\neq i$,
\begin{align*}
& \|f\|_{ H_0^s}=\big\|\sum_{j=1}^N\psi_{r_N,z_j}\big\|_{H_0^s} \leq (\Fhat{C}N)^{\frac12} 
\end{align*}
Define the normalized test function $g:=f\|f\|^{-1}_{H^s_0}\in \mathcal{S}(\R^d,\mathbb{C})$
then 
\begin{equation}\label{e:intg-lower}
\int_{\R^d} g\,\dd\omega = \frac{\int_{\R^d} f\,\dd\omega}{\|f\|_{H_0^s}}
\ge \frac{C^{-1}_\omega\left(\frac{ \alpha}{2}\right)^\expAhlfors N^{-\frac{q}{2\expAhlfors }}}{(\Fhat{C}N)^{\frac12}}.
\end{equation}
Now apply the dual formula in Proposition~\ref{p:dual-norm-exact} for $\nu=\mu_N-\omega$,
\begin{align*}
\|\mu_N-\omega\|_{H_0^{-s}}
&\ge (2\pi)^d\left|\int g\,d(\mu_N-\omega)\right|\\
&=(2\pi)^d\left|\int g\,d\mu_N-\int g\,d\omega\right|
=(2\pi)^d\int g\,d\omega
\ge c_L\,N^{-\frac{1}{2}(1+\frac{q}{\expAhlfors})},    
\end{align*}
with $c_L:=C^{-1}_{\omega}\left(\frac{ \alpha}{2}\right)^\expAhlfors  (2\pi)^d\,\Fhat{C}^{-\frac12}$.
\end{proof}

\subsection{Upper bound}\label{sec: upper bound}
We now turn to the proof of the upper bound. The argument for the main theorem \ref{t:main-intro}
will use, in addition to the Ahlfors regularity assumption \eqref{e:alfhors reg}, the
following connectedness hypothesis
\begin{equation}\tag{Conn}\label{e:supporto connesso}
    \textup{$\textup{spt} \; \omega$ is connected. 
}
\end{equation}
Its role is to invoke the \cite[Theorem 2]{GL17}, which provides equimass cells with a
uniform diameter bound. After completing the connected-support upper bound, we
will explain what remains true for arbitrary Ahlfors regular supports, where one can
still construct equimass partitions but, in general, only with control of the sum of the
$q$-powers of the diameters (cf. Lemma \ref{l:partizione senza connessione}).

In this section $(\Omega,\mathcal{N},\mathbb{P})$ will be a probability space, large enough to guarantees the independence of some random objects. Moreover in addition to assumption \eqref{e:alfhors reg} on $\omega$, we require that

\begin{lemma}\label{l:Gigante-Leopardi}
Assume that $\omega\in \mathcal{P}(\R^d)$ satisfies \eqref{e:alfhors reg} and \eqref{e:supporto connesso}.
Then there exist three constants $c_3,C_4>0$ and $N_0\in \N$, depending only on $d$ and $\omega$, such that for every $N\geq N_0$ there is a family $\{A_1,\dots, A_N\}$ of mutually disjoint Borel sets such that $\cup_{i=1}^NA_i = \textup{spt} \; \omega$, $\omega(A_i)=\frac1N$ and $\diam(A_i)\leq C_4 N^{-\frac1\expAhlfors}$ for every $i=1,\dots,N$ and exist $\{z_1,\dots,z_N\}$ such that $z_i\in A_i$ and $B_{c_3 N^{-\sfrac1\expAhlfors}}(z_i)\cap \mathrm{spt}\, \omega \subseteq A_i$ for every $i=1,\dots,N$.
\end{lemma}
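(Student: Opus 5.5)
The plan is to reduce the statement to the partition theorem of Gigante--Leopardi \cite[Theorem 2]{GL17}, after checking that $\omega$ falls within its scope. That result, for a connected metric measure space that is Ahlfors regular of dimension $\beta$, gives partitions into $N$ sets of equal measure $1/N$. Each cell has diameter $\le C N^{-1/\beta}$ and contains a ball of radius $cN^{-1/\beta}$.

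\textbf{Step 1: set up the metric measure space.} Put $X:=\mathrm{spt}\,\omega$ with the Euclidean distance and the measure $\omega$. By Remark~\ref{r:supporto compatto}, $X$ is compact. By \eqref{e:supporto connesso}, $X$ is connected.

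\textbf{Step 2: upgrade \eqref{e:alfhors reg} to all radii $r\le \diam X$.} Here the local Ahlfors condition must become a global one, at the cost of worse constants.
\begin{itemize}
\item For $r\ge R_0$, the upper bound is immediate: $\omega(B_r(x))\le 1\le R_0^{-\beta}r^\beta$.
\item The lower bound uses $\omega(B_r(x))\ge \omega(B_{R_0/2}(x))\ge C_\omega^{-1}(R_0/2)^\beta$.
\item Since $r\le \diam X<\infty$, this gives $\omega(B_r(x))\ge C_\omega^{-1}(R_0/(2\diam X))^\beta r^\beta$.
\end{itemize}
Hence $\omega$ is globally $\beta$-Ahlfors regular on $X$, with constants depending only on $C_\omega,R_0,\diam X$.

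\textbf{Step 3: apply the partition theorem.} The hypotheses of \cite[Theorem 2]{GL17} now hold, so for $N\ge N_0$ we obtain Borel sets $A_1,\dots,A_N$ with the required properties.
\begin{itemize}
\item The $A_i$ are disjoint and their union is $X$; any null leftover can be absorbed into one cell.
\item Each cell has mass $\omega(A_i)=1/N$.
\item Each cell has diameter $\diam A_i\le C_4N^{-1/\beta}$.
\item There are points $z_i\in A_i$ with $B_{c_3N^{-1/\beta}}(z_i)\cap X\subseteq A_i$.
\end{itemize}
The ball statement is the relative-ball, or inner-radius, part of their result. The intersection with $X$ appears because the balls of $X$ are the traces of Euclidean balls.

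\textbf{Main obstacle.} The hard step is confirming that Gigante--Leopardi's hypotheses and conclusions match the statement exactly.
\begin{itemize}
\item Their condition may be stated for all $r$ up to the diameter; Step 2 handles this.
\item They may use a connectedness variant such as uniform or chain connectedness. If only connectedness plus compactness plus Ahlfors regularity is required, we are done.
\item The inner-ball conclusion may be absent in the version cited. If so, I would recover it from their dyadic-cube construction. Christ/Hytönen--Kairema cubes each contain a ball $B_{c\delta^k}(z)\cap X$. Their splitting procedure only merges and subdivides cubes of comparable generation, so each final cell still contains such a ball.
\end{itemize}
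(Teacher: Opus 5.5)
Your proposal is correct and follows the paper's route: the paper's proof is the one-line application of \cite[Theorem 2]{GL17} to the metric measure space $(\mathrm{spt}\,\omega, |\cdot|, \omega)$. Your Steps 1--2 (compactness via Remark~\ref{r:supporto compatto}, and upgrading \eqref{e:alfhors reg} to all radii $r\le\diam(\mathrm{spt}\,\omega)$ with worse constants) correctly verify hypotheses the paper leaves implicit, and the fallback contingencies in your last paragraph are unnecessary given that, as the paper asserts, the cited theorem directly yields both the diameter bound and the inner-ball conclusion.
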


\begin{proof}
Direct consequence of \cite[Theorem 2]{GL17} applied to the metric measure space $(\mathrm{spt}\, \omega, \textup{d}, \omega)$ where $\textup{d}$ is the Euclidean metric.
\end{proof}

Given $N\geq N_2$ and $X_1,\dots,X_N :\Omega \to \R^d$ independent random variables such that $X_i \sim N\omega \res A_i $ for every $i=1,\dots,N$ (cf. Lemma~\ref{l:Gigante-Leopardi}), then we can consider the random empirical measure $\tilde{\mu}_N: \Omega \to \overline{\mathcal{M}_0}$ given by
\begin{equation}\label{e:random empirical}
    \tilde{\mu}_N(\eta):=\frac1N\sum_{i=1}^N (\delta_{X_i(\eta)}-\delta_0)=\frac1N\sum_{i=1}^N\Phi(X_i(\eta)),
\end{equation}
where $\Phi$ is the function in \eqref{e:isometry}.

\begin{lemma}\label{l:polarity}
Let $(\mathcal{K},\langle\cdot,\cdot\rangle_{\mathcal{K}})$ be a separable real Hilbert space and $Z,Z':\Omega \to \mathcal{K}$ be two square-integrable i.i.d $\mathcal{K}$-valued random variables
such that $\mathbb{E}[Z]=0$ and $\mathbb{E}[Z']=0$,  then
\begin{equation*}
\mathbb{E}[\langle Z,Z'\rangle_{\mathcal{K}}]=0.    
\end{equation*}
\end{lemma}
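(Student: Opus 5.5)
The plan is to exploit independence together with the existence of the Bochner expectation in the separable Hilbert space $\mathcal{K}$. Square-integrability, $\mathbb{E}\|Z\|_{\mathcal{K}}^2<\infty$, implies Bochner integrability, so $\mathbb{E}[Z]\in\mathcal{K}$ is well defined. Bochner integrals also commute with bounded linear functionals, so $\mathbb{E}[\langle Z,h\rangle_{\mathcal{K}}]=\langle \mathbb{E}[Z],h\rangle_{\mathcal{K}}$ for every fixed $h\in\mathcal{K}$. Integrability of $\langle Z,Z'\rangle_{\mathcal{K}}$ follows from Cauchy--Schwarz twice: $\mathbb{E}|\langle Z,Z'\rangle_{\mathcal{K}}|\le \mathbb{E}[\|Z\|_{\mathcal{K}}\|Z'\|_{\mathcal{K}}]=\mathbb{E}\|Z\|_{\mathcal{K}}\,\mathbb{E}\|Z'\|_{\mathcal{K}}<\infty$, where the equality uses independence of the real variables $\|Z\|_{\mathcal{K}}$ and $\|Z'\|_{\mathcal{K}}$.

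The core step is a Fubini argument on the joint law. Since $Z$ and $Z'$ are independent, the law of $(Z,Z')$ on $\mathcal{K}\times\mathcal{K}$ is the product $\mathbb{P}_Z\otimes\mathbb{P}_{Z'}$. Separability of $\mathcal{K}$ ensures that the Borel $\sigma$-algebra of $\mathcal{K}\times\mathcal{K}$ is the product $\sigma$-algebra, so the inner product is jointly measurable and Fubini applies. The integrability bound above justifies Fubini--Tonelli, giving
\[
\mathbb{E}[\langle Z,Z'\rangle_{\mathcal{K}}]=\int_{\mathcal{K}}\Big(\int_{\mathcal{K}}\langle u,v\rangle_{\mathcal{K}}\,d\mathbb{P}_{Z'}(v)\Big)d\mathbb{P}_Z(u)=\int_{\mathcal{K}}\langle u,\mathbb{E}[Z']\rangle_{\mathcal{K}}\,d\mathbb{P}_Z(u)=0,
\]
because $\mathbb{E}[Z']=0$.

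An alternative route, useful if one prefers to avoid measurability subtleties, is to expand in an orthonormal basis $(e_k)$ of $\mathcal{K}$:
\[
\langle Z,Z'\rangle_{\mathcal{K}}=\sum_k\langle Z,e_k\rangle_{\mathcal{K}}\langle Z',e_k\rangle_{\mathcal{K}}.
\]
Each term has expectation $\mathbb{E}\langle Z,e_k\rangle_{\mathcal{K}}\,\mathbb{E}\langle Z',e_k\rangle_{\mathcal{K}}=0$ by independence and centering. Exchanging sum and expectation is justified by dominated convergence, since partial sums are bounded by $\|Z\|_{\mathcal{K}}\|Z'\|_{\mathcal{K}}\in L^1$ via Cauchy--Schwarz in $\ell^2$.

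I expect no real obstacle. The only delicate point is justifying the interchange of expectation and inner product, or of sum and expectation, and that is handled by the $L^1$ domination. Note that the identical-distribution assumption is not needed; only independence and the centering of one of the two variables are used.
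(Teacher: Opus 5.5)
Your proof is correct and takes the same route as the paper, which simply asserts that independence gives $\mathbb{E}[\langle Z,Z'\rangle_{\mathcal{K}}]=\langle\mathbb{E}[Z],\mathbb{E}[Z']\rangle_{\mathcal{K}}=0$. Your Fubini argument on the product law, or alternatively the orthonormal-basis expansion with $L^1$ domination by $\|Z\|_{\mathcal{K}}\|Z'\|_{\mathcal{K}}$, supplies the justification of this factorization that the paper leaves implicit, and your remark that only independence and the centering of one variable are needed is also correct.
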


\begin{proof}
Since $Z$ and $Z'$ are independent and both have zero mean,
\begin{equation*}
\mathbb{E}[\langle Z, Z'\rangle_{\mathcal{K}}]
= \langle \mathbb{E}[Z],\, \mathbb{E}[Z']\rangle_{\mathcal{K}}
= \langle 0, 0 \rangle_{\mathcal{K}} = 0,
\end{equation*}
where we used that for independent square-integrable $\mathcal{K}$-valued
random variables the expectation of the inner product factorises as the
inner product of the expectations.
\end{proof}

\begin{lemma}\label{l:hilbert-variance}
Let $(\mathcal{K},\langle\cdot,\cdot\rangle_{\mathcal{K}})$ be a separable real Hilbert space and $Z,Z':\Omega \to \mathcal{K}$ be two square-integrable i.i.d $\mathcal{K}$-valued random variables. Then
\begin{equation}\label{eq:variance-identity}
\mathbb{E}[\|Z-\mathbb{E}[Z]\|_{\mathcal{K}}^2]=\frac12\,\mathbb{E}[\|Z-Z'\|_{\mathcal{K}}^2].
\end{equation}
\end{lemma}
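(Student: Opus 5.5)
We will prove \eqref{eq:variance-identity} by expanding the right-hand side with bilinearity of $\langle\cdot,\cdot\rangle_{\mathcal K}$ and then recentering. Set $m:=\mathbb{E}[Z]=\mathbb{E}[Z']\in\mathcal K$, where the Bochner expectations exist and coincide because $Z,Z'$ are square-integrable, hence integrable, and identically distributed. Put $W:=Z-m$ and $W':=Z'-m$. These are again i.i.d., square-integrable and centered, and $Z-Z'=W-W'$. The strategy is therefore to reduce to the centered case and apply Lemma~\ref{l:polarity}.

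First we expand pointwise on $\Omega$:
\[
\|W-W'\|_{\mathcal K}^2=\|W\|_{\mathcal K}^2+\|W'\|_{\mathcal K}^2-2\langle W,W'\rangle_{\mathcal K}.
\]
Each term on the right is integrable. The first two are integrable by square-integrability, and the cross term satisfies $|\langle W,W'\rangle_{\mathcal K}|\le \tfrac12(\|W\|_{\mathcal K}^2+\|W'\|_{\mathcal K}^2)$. We can thus take expectations termwise. Since $W$ and $W'$ have the same law, $\mathbb{E}\|W'\|_{\mathcal K}^2=\mathbb{E}\|W\|_{\mathcal K}^2$. By Lemma~\ref{l:polarity} applied to $W,W'$, we get $\mathbb{E}\langle W,W'\rangle_{\mathcal K}=0$. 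Hence $\mathbb{E}\|Z-Z'\|_{\mathcal K}^2=2\,\mathbb{E}\|Z-\mathbb{E}[Z]\|_{\mathcal K}^2$, which is the claim.

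The only point that needs real care is the factorization $\mathbb{E}\langle W,W'\rangle=\langle \mathbb{E}W,\mathbb{E}W'\rangle$ for independent variables, which is used inside Lemma~\ref{l:polarity}. If one wants to check it directly, fix an orthonormal basis $(e_k)$ of the separable space $\mathcal K$ and write $\langle W,W'\rangle=\sum_k \langle W,e_k\rangle\langle W',e_k\rangle$. The partial sums are dominated by $\|W\|\,\|W'\|$, which is integrable by independence and Cauchy--Schwarz. Dominated convergence then lets us exchange the sum and the expectation. Finally, scalar independence gives $\mathbb{E}[\langle W,e_k\rangle\langle W',e_k\rangle]=\langle \mathbb{E}W,e_k\rangle\langle\mathbb{E}W',e_k\rangle=0$ for each $k$.
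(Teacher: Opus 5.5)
Your proposal is correct and follows essentially the same route as the paper: recenter at $\mathbb{E}[Z]=\mathbb{E}[Z']$, expand $\|Z-Z'\|_{\mathcal K}^2$, use equality in law for the two diagonal terms, and kill the cross term via Lemma~\ref{l:polarity} (independence plus zero mean). Your extra check of the factorization $\mathbb{E}\langle W,W'\rangle_{\mathcal K}=\langle \mathbb{E}W,\mathbb{E}W'\rangle_{\mathcal K}$, done with an orthonormal basis and dominated convergence, fills in a detail the paper only asserts.
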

%
\begin{proof}
Since $Z$ and $Z'$ are i.i.d., expanding the square gives
\begin{align*}
\mathbb{E}[\|Z-Z'\|_{\mathcal{K}}^2]
&= \mathbb{E}[\|Z-\mathbb{E}[Z]\|_{\mathcal{K}}^2]
 + \mathbb{E}[\|Z'-\mathbb{E}[Z']\|_{\mathcal{K}}^2]
= 2\,\mathbb{E}[\|Z-\mathbb{E}[Z]\|_{\mathcal{K}}^2],
\end{align*}
where we used that the cross term
$\mathbb{E}[\langle Z - \mathbb{E}[Z],\, Z' - \mathbb{E}[Z']\rangle_{\mathcal{K}}] = 0$
vanishes by independence and zero mean of the centered variables.
Dividing by $2$ gives \eqref{eq:variance-identity}.
\end{proof}

\begin{theorem}\label{t:upper bound in media}
Assume that $\omega\in \mathcal{P}(\R^d)$ satisfies \eqref{e:alfhors reg} and \eqref{e:supporto connesso}. Then there exist $C_L>0$ and $N_0\in \N$ such that for every $N\geq N_0$ 
\begin{equation}\label{e:mean-upper-bound}
\mathbb{E}\bigl[\|\tilde{\mu}_N-\tilde{\omega}\|_{H_0^{-s}}^2\bigr]\ \le\ C_L\,N^{-(1+\frac{q}{\expAhlfors})},
\end{equation}
where $\tilde{\mu}_N$ is given in \eqref{e:random empirical} and $\tilde{\omega}:=\omega-\delta_0\in \calM_0(\R^d)$.
\end{theorem}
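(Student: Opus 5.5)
We will work in the separable Hilbert space $\mathcal K:=\overline{\calM_0}$ and use the isometric embedding $\Phi(x)=\delta_x-\delta_0$ of Theorem~\ref{t:equivalenza}. Write
\[
\tilde\omega=\omega-\delta_0=\sum_{i=1}^N \frac1N\bigl(N\omega\res A_i-\delta_0\bigr),
\]
which uses $\omega(A_i)=1/N$ and $\bigcup_i A_i=\mathrm{spt}\,\omega$ from Lemma~\ref{l:Gigante-Leopardi}. Then
\[
\tilde\mu_N-\tilde\omega=\frac1N\sum_{i=1}^N Z_i,\qquad Z_i:=\Phi(X_i)-\bigl(N\omega\res A_i-\delta_0\bigr).
\]
The key point is that $N\omega\res A_i-\delta_0=\mathbb E[\Phi(X_i)]$ as a Bochner expectation in $\mathcal K$. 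To justify this, note that $\Phi$ is continuous into $\mathcal K$ by \eqref{e:isometry}, since $\|\Phi(x)-\Phi(y)\|^2=\kappa_{d,q}|x-y|^q$. It is also bounded on the compact set $\mathrm{spt}\,\omega$ (Remark~\ref{r:supporto compatto}), so $\Phi(X_i)$ is Bochner integrable. Its expectation is identified by testing against Schwartz functions: we have $\langle f,\mathbb E\Phi(X_i)\rangle=\mathbb E[f(X_i)-f(0)]=\int f\,d(N\omega\res A_i)-f(0)$, and Corollary~\ref{c:Schwartz-separates} then identifies the two elements. Alternatively one can compare Fourier transforms pointwise via Fubini. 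Hence each $Z_i$ is centered, and the $Z_i$ are independent.

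Expanding the square,
\[
\mathbb E\|\tilde\mu_N-\tilde\omega\|^2_{H_0^{-s}}=\frac1{N^2}\sum_i\mathbb E\|Z_i\|^2+\frac1{N^2}\sum_{i\neq j}\mathbb E\langle Z_i,Z_j\rangle .
\]
The cross terms vanish: the factorisation argument of Lemma~\ref{l:polarity} only uses independence and zero mean, not identical distribution, so it applies to the pair $Z_i,Z_j$. For the diagonal terms we apply Lemma~\ref{l:hilbert-variance} with $Z=\Phi(X_i)$ and an independent copy $X_i'\sim N\omega\res A_i$:
\[
\mathbb E\|Z_i\|^2=\tfrac12\mathbb E\|\Phi(X_i)-\Phi(X_i')\|^2=\tfrac{\kappa_{d,q}}2\mathbb E|X_i-X_i'|^q\le\tfrac{\kappa_{d,q}}2\,\diam(A_i)^q\le \tfrac{\kappa_{d,q}}2 C_4^q N^{-q/\expAhlfors}.
\]
The last step uses Lemma~\ref{l:Gigante-Leopardi}. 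Summing over $i$ gives $N\cdot N^{-2}\cdot \frac{\kappa_{d,q}}2C_4^qN^{-q/\beta}$, so \eqref{e:mean-upper-bound} holds with $C_L=\kappa_{d,q}C_4^q/2$.

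The main obstacle is the rigorous identification of $\mathbb E[\Phi(X_i)]$ with $N\omega\res A_i-\delta_0$ inside the completion $\overline{\calM_0}$: measurability and Bochner integrability of $\Phi(X_i)$, together with the fact that pairing with Schwartz functions commutes with expectation. Continuity of $\Phi$ handles measurability. The commutation holds because, for fixed $f\in\mathcal S$, the map $\nu\mapsto\langle f,\nu\rangle$ is a bounded linear functional on $\mathcal K$, by \eqref{Cauchy-Schwartz sob}, and bounded functionals commute with Bochner integrals. Everything else is a direct combination of the Hilbert-space identities already established.
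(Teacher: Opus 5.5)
Your proposal is correct and follows the paper's proof essentially step by step. You identify $\mathbb E[\Phi(X_i)]$ with $N\omega\res A_i-\delta_0$ by testing against Schwartz functions and using Corollary~\ref{c:Schwartz-separates}, kill the cross terms by independence and centering, and bound each diagonal term via Lemma~\ref{l:hilbert-variance}, the isometry \eqref{e:isometry} and the diameter bound of Lemma~\ref{l:Gigante-Leopardi}, arriving at the same constant $C_L=\kappa_{d,q}C_4^q/2$; your remark that Lemma~\ref{l:polarity} needs only independence and zero mean (the centered summands are not identically distributed) is a correct and worthwhile precision.
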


\begin{proof}
Fix $N\geq N_0$, with $N_0$ given in Lemma \ref{l:Gigante-Leopardi}, and let $\Phi$ be as in \eqref{e:isometry} and $X_1,\dots,X_N$ as in \eqref{e:random empirical}. We first observe that $\Phi(X_i)\in L^2(\Omega,\overline{\calM_0})$ for every $i=1,\dots,N$, indeed, from Theorem~\ref{t:equivalenza}, for every $\eta\in \Omega$ we have
\begin{equation*}
  \|\Phi(X_i(\eta))\|_{H^{-s}_0}= \|\delta_{X_i(\eta)}-\delta_0\|_{H^{-s}_0} =  \kappa_{d,q}|X_i(\eta)|^q \leq \kappa_{d,q} \textup{dist}(\mathrm{spt}\, \omega,0)^q <\infty, 
\end{equation*}
because $\omega$ has compact support. Setting now, for every $i=1,\dots,N$, $\zeta_i\in L^2(\Omega,\overline{\calM_0})$ as 
\begin{equation*}
    \zeta_i:=\Phi(X_i)-\mathbb E[\Phi(X_i)],
\end{equation*}
we have that $\zeta_1,\dots, \zeta_N : \Omega \to \overline{\mathcal{M}_0}$ are independent $\overline{\calM_0}$-random variables such that $\mathbb{E}[\zeta_i]=0$ for every $i=1,\dots, N$. 

Next, we claim that
\begin{equation}\label{e:mu-omega-expect-rig}
\mathbb E[\|\tilde{\mu}_N-\tilde{\omega}\|_{H_0^{-s}}^2]
=\frac1{N^2}\sum_{i=1}^N \mathbb E[\|\zeta_i\|_{H_0^{-s}}^2]\,.
\end{equation}
To this aim, let $m_i:=N \omega \res A_i \in \calP(\R^d)$ for every $i=1,\dots,N$, 
as $X_i \sim m_i$ we infer 
\begin{equation}\label{e:equa 1 upperbound}
    \int_{\R^d} f \dd m_i = \mathbb{E}[f(X_i)] \qquad \textup{for every $f\in C_b(\R^d,\mathbb{C})$}.
\end{equation}
Note that 
\begin{equation}\label{e:Valore atteso identificato}
\mathbb{E}[\Phi(X_i)] = m_i-\delta_{0}\in \mathcal{M}_0(\R^d).
\end{equation}
Indeed, fix $f\in \mathcal{S}(\R^d,\mathbb{C})$ and by continuity-linearity and \eqref{e:equa 1 upperbound} we have that 
\begin{align}\label{e:uguaglianza valore atteso}
   & \langle f,\mathbb{E}[\Phi(X_i)]\rangle = \mathbb{E}[\langle f,\Phi(X_i)\rangle]= \mathbb{E}[f(X_i)-f(0)]= \\ & \mathbb{E}[f(X_i)]-f(0)= \int_{\R^d}f \;\dd (m_i-\delta_0)= \langle f,m_i-\delta_0\rangle. \nonumber
\end{align}
Therefore, by Corollary \ref{c:Schwartz-separates} we conclude. 

From \eqref{e:Valore atteso identificato}, for every $i=1,\dots,N$, we have that  
\begin{equation*}
\zeta_i
=\Phi(X_i)-\mathbb E[\Phi(X_i)]
=(\delta_{X_i}-\delta_{0})-(m_i-\delta_{0})
=\delta_{X_i}-m_i.    
\end{equation*}
Since $\frac1N\sum_{i=1}^N (m_i-\delta_0)=(\omega-\delta_0)=\tilde{\omega}$, we obtain:
\begin{equation*}
\tilde{\mu}_N-\tilde{\omega}=\frac1N\sum_{i=1}^N(\delta_{X_i}-\delta_0)-\frac1N\sum_{i=1}^N(m_i-\delta_0) = \frac1N\sum_{i=1}^N(\delta_{X_i}-m_i)=\frac1N\sum_{i=1}^N\zeta_i.
\end{equation*}
In particular, we have
\begin{equation*}
  \mathbb E[\|\tilde{\mu}_N-\tilde{\omega}\|_{H^{-s}_0}^2]
=\frac1{N^2}\,\mathbb E\big[\big\|\sum_{i=1}^N\zeta_i\big\|_{H^{-s}_0}^2\big].  
\end{equation*}
Expanding the square and using Lemma~\ref{l:polarity} yields
\begin{equation*}
\mathbb E\big[\big\|\sum_{i=1}^N\zeta_i\big\|_{H^{-s}_0}^2\big]
=\sum_{i=1}^N \mathbb E[\|\zeta_i\|_{H_0^{-s}}^2],   \end{equation*}
hence \eqref{e:mu-omega-expect-rig} follows.

Fix now $i\in \{1,\dots,N\}$. Let $Z=\Phi(X_i)$ and $X_i':\Omega \to \R^d$ be an independent copy of $X_i$, then set $Z':=\Phi(X_i')$.
Lemma~\ref{l:hilbert-variance} and \eqref{e:isometry} give
\begin{align} \label{e:second equa upper}
& \mathbb E[\|\zeta_i\|_{H^{-s}_0}^2]=\mathbb E[\|Z\|_{H^{-s}_0}^2] =\frac12\,\mathbb E[\|Z-Z'\|_{H^{-s}_0}^2]
\\ & =\frac12\,\mathbb E[\|\Phi(X_i)-\Phi(X_i')\|_{H^{-s}_0}^2]=\frac{\kappa_{d,q}}{2}\,\mathbb E[|X_i-X_i'|^q] . \nonumber   
\end{align}
Since $X_i,X_i'\in A_i$ a.s., $|X_i-X_i'|\le \diam(A_i)$ a.s., hence
\begin{equation*}
\mathbb E[\|\zeta_i\|_{H^{-s}_0}^2] \le \frac{\kappa_{d,q}}{2}\,\diam(A_i)^q.    
\end{equation*}
Use \eqref{e:second equa upper} and insert into \eqref{e:mu-omega-expect-rig}:
\begin{equation*}
\mathbb E[\|\tilde{\mu}_N-\tilde{\omega}\|_{H^{-s}_0}^2]
\le \frac1{N^2}\sum_{i=1}^N \frac{\kappa_{d,q}}{2}\,\diam(A_i)^q
\le \frac{\kappa_{d,q}}{2N}\left(\max_{1\le i\le N}\diam(A_i)\right)^q.    
\end{equation*}
Finally, by Lemma~\ref{l:Gigante-Leopardi}, $\diam(A_i)\le C_4 N^{-\frac1\expAhlfors}$, so
\begin{equation*}
\mathbb E[\|\tilde{\mu}_N-\tilde{\omega}\|_{H^{-s}_0}^2] \le \frac{\kappa_{d,q}}{2N}C_4^q\,N^{-\frac{q}{\expAhlfors}}
= C_L\,N^{-(1+\frac{q}{\expAhlfors})},    
\end{equation*}
where $C_L:= \frac{\kappa_{d,q}}{2}C_4^q$.
\end{proof}

\begin{corollary}
Assume that $\omega\in \mathcal{P}(\R^d)$ satisfies \eqref{e:alfhors reg} and \eqref{e:supporto connesso}, then there exists $C_L>0$ and $N_0\in \N$ such that for every $N\geq N_0$ there exist $x_1,\dots,x_N \in \textup{spt} \; \omega$ with
\begin{equation}\label{e:upper-bound-cor}
\|\mu_N-\omega\|_{H_0^{-s}}\le\ C_L\,N^{-\frac{1}{2}(1+\frac{q}{\expAhlfors})},
\end{equation}
where ${\mu_N}:=\frac1N\sum_{i=1}^N \delta_{x_i}$. 
\end{corollary}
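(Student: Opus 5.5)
The corollary follows from Theorem~\ref{t:upper bound in media} by the probabilistic method: a nonnegative random variable takes, with positive probability, a value no larger than its mean.

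Fix $N\ge N_0$ and take the random points $X_1,\dots,X_N$ of \eqref{e:random empirical}, independent with $X_i\sim N\omega\res A_i$, where $\{A_i\}$ is the partition from Lemma~\ref{l:Gigante-Leopardi}.

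The first step is to remove the auxiliary $\delta_0$. For every realization $\eta\in\Omega$, set $\mu_N^\eta:=\frac1N\sum_i\delta_{X_i(\eta)}$. Then
\[
\tilde\mu_N(\eta)-\tilde\omega=\Bigl(\tfrac1N\textstyle\sum_i\delta_{X_i(\eta)}-\delta_0\Bigr)-(\omega-\delta_0)=\mu_N^\eta-\omega .
\]
This is an identity in $\calM_0(\R^d)$, since both measures are compactly supported probability measures. Consequently
\[
\|\tilde\mu_N(\eta)-\tilde\omega\|_{H_0^{-s}}=\|\mu_N^\eta-\omega\|_{H_0^{-s}} .
\]

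The second step applies the averaging argument to $Y(\eta):=\|\mu_N^\eta-\omega\|_{H_0^{-s}}^2$. This is a nonnegative random variable, and by \eqref{e:mean-upper-bound} we have $\mathbb E[Y]\le C_L N^{-(1+q/\beta)}$.

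The only point needing care is measurability of $Y$. I would obtain it from continuity. By Theorem~\ref{t:equivalenza}, $Y$ is an explicit finite combination of the terms $|X_i-X_j|^q$ and $\int|X_i-y|^q\,d\omega(y)$, plus a constant, hence a continuous function of $(X_1,\dots,X_N)$.

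The event $\{Y\le \mathbb E[Y]\}$ cannot be $\mathbb P$-null. If it were, then $Y>\mathbb E[Y]$ almost surely, and integrating would give $\mathbb E[Y]>\mathbb E[Y]$, a contradiction. Moreover, almost surely each $X_i$ lies in $A_i\subseteq\mathrm{spt}\,\omega$. So we can pick $\eta$ in the intersection of these two events and set $x_i:=X_i(\eta)\in\mathrm{spt}\,\omega$. This gives
\[
\|\mu_N-\omega\|_{H_0^{-s}}^2\le C_L N^{-(1+q/\beta)} .
\]
Taking square roots yields \eqref{e:upper-bound-cor}, with constant $C_L^{1/2}$; after renaming, this is the $C_L$ in the statement.

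There is no genuine obstacle. The only steps needing care are the identification of $\tilde\mu_N-\tilde\omega$ with $\mu_N-\omega$ and the measurability of $Y$, both handled above.
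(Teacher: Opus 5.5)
Your proposal is correct and is essentially the argument the paper intends. The paper states the corollary without proof, as an immediate consequence of Theorem~\ref{t:upper bound in media}, and spells out the same ``some realization is no worse than the mean'' step in the proof of Corollary~\ref{c:upper senza connesso}. Your checks fill in details the paper leaves implicit: the identification of $\tilde\mu_N-\tilde\omega$ with $\mu_N-\omega$, the measurability of $Y$, the almost sure inclusion $X_i\in A_i\subseteq\mathrm{spt}\,\omega$, and the renaming of the constant to $C_L^{1/2}$.
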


We now pass to consider the general Ahlfors regular case,
where no connectedness assumption is imposed on $\mathrm{spt}\,\omega$. The
price to pay is that the Gigante--Leopardi uniform estimate
$\textup{diam}(A_i)\lesssim N^{-1/\expAhlfors}$ is no longer available in
general. However, the probabilistic proof only requires control of
$\sum_i \operatorname{diam}(A_i)^q$. The next lemma provides precisely this weaker
global estimate.
\begin{lemma}\label{l:partizione senza connessione}
Assume that $\omega\in \mathcal{P}(\R^d)$ satisfies \eqref{e:alfhors reg}. Then, there exist a constant $\hat C > 0$ and $N_0 \in \mathbb{N}$ such that for every integer $N \ge N_0$, one can find a Borel partition $\textup{spt} \; \omega = A_1 \sqcup \cdots \sqcup A_N$ satisfying
\begin{equation*}
    \omega(A_i) = \frac1N \quad \text{for all } i=1,\dots,N,  
\end{equation*}
and
\begin{equation*}
     \sum_{i=1}^N \operatorname{diam}(A_i)^q \le \hat C
    \begin{cases} 
    N^{1-\frac{q}{\expAhlfors}}, & \text{if } q < \expAhlfors,\\[2mm] 
    \log N, & \text{if } q = \expAhlfors,\\[2mm] 
    1, & \text{if } q > \expAhlfors. 
    \end{cases}
\end{equation*}
\end{lemma}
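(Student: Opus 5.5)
We construct the partition by a dyadic-type multiscale decomposition of $\mathrm{spt}\,\omega$ and then split it into equal masses. First, recall from Remark~\ref{r:supporto compatto} that $\mathrm{spt}\,\omega$ is compact, so it suffices to treat $N$ large.

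\emph{Step 1: a nested family of cubes.} Take the standard dyadic cubes $\mathcal Q_k$ of side $2^{-k}$ in $\R^d$. Each has diameter $\sqrt d\,2^{-k}$. Consider the cubes that meet $\mathrm{spt}\,\omega$ and carry positive mass.

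\emph{Step 2: two-sided control on the number of cubes.} Count these cubes at level $k$ using \eqref{e:alfhors reg}. Around each such cube there is a point of $\mathrm{spt}\,\omega$, and the balls of radius comparable to $2^{-k}$ centred at these points overlap at most boundedly often. The lower Ahlfors bound therefore gives
\[
\#\{Q\in\mathcal Q_k:\ \omega(Q)>0\}\lesssim 2^{k\expAhlfors}.
\]
Conversely, the upper Ahlfors bound gives $\omega(Q)\le C 2^{-k\expAhlfors}$ for every such cube whenever $2^{-k}<R_0$.

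\emph{Step 3: a greedy equal-mass splitting.} Order the dyadic tree depth-first, so that consecutive leaves at a fine level form contiguous runs inside coarse cubes. Map $\mathrm{spt}\,\omega$ to $[0,1)$ via the cumulative mass in this order, cutting atoms of $\omega\res Q$ if needed; Ahlfors regularity gives no atoms, so exact mass splitting works. Define $A_i$ as the preimage of $[(i-1)/N,i/N)$.

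\emph{Step 4: bounding the diameters.} Each $A_i$ is contained in the union of at most two chains of dyadic cubes. A cell $A_i$ that is not contained in a single cube of level $k_N$, where $2^{-k_N\expAhlfors}\sim 1/N$, must straddle boundaries of cubes. The key counting is the following. For each level $k\le k_N$, at most $\#\{Q\in\mathcal Q_k\}\lesssim 2^{k\expAhlfors}$ cells can straddle a level-$k$ boundary without straddling a coarser one. Such a cell has diameter at most comparable to $2^{-(k-1)}$, and at worst $\diam(\mathrm{spt}\,\omega)$. Cells contained in a level-$k_N$ cube have diameter $\lesssim N^{-1/\expAhlfors}$, giving a contribution $\le N\cdot N^{-q/\expAhlfors}$. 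Hence
\[
\sum_i\diam(A_i)^q\lesssim N^{1-q/\expAhlfors}+\sum_{k\le k_N}2^{k\expAhlfors}2^{-kq}.
\]
The geometric sum is dominated by its last term when $q<\expAhlfors$, giving $N^{1-q/\expAhlfors}$. It is $\sim k_N\sim\log N$ when $q=\expAhlfors$, and $O(1)$ when $q>\expAhlfors$. This yields the three cases claimed in Lemma~\ref{l:partizione senza connessione}.

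\emph{Main obstacle.} The subtle point is justifying the straddling count in Step 4. A cell that crosses a level-$k$ boundary lies within the parent cube only if the depth-first order keeps it inside one level-$(k-1)$ cube. Otherwise it straddles a coarser boundary and should be charged there instead. We handle this by charging each cell to the coarsest level $k$ such that it lies in a single level-$(k-1)$ cube but meets two level-$k$ children. The number of cells charged at level $k$ is at most the number of boundaries between consecutive children, which is $\lesssim\#\mathcal Q_k$. We need $N_0$ so that $2^{-k_N}<R_0$.
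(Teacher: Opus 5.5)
Your proof is correct, but you build the partition differently from the paper.

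\emph{The paper's route.} It works bottom-up. It fixes a terminal generation $J(N)$ at which, by the \emph{upper} Ahlfors bound, every active cube carries mass at most $1/N$. It then passes residues of mass $<1/N$ from children to parents, and inside each pooled residue $S_Q\subset Q$ it extracts at most $2^d$ pieces of mass exactly $1/N$, using a splitting lemma for non-atomic measures. Mass conservation then shows that exactly $N$ pieces are produced. Each piece lies in a single cube of the generation where it was extracted, so its diameter bound is immediate.

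\emph{Your route.} You fix one global total order on $\mathrm{spt}\,\omega$, the lexicographic order of dyadic addresses (a Z-order curve). You map $x\mapsto F(x)=\omega(\{y\prec x\})$ and cut $[0,1]$ into $N$ consecutive intervals. Exact equal masses are then automatic once $F_\#\omega$ is Lebesgue measure on $[0,1]$. There is no induction and no counting of extracted pieces, and the same ordering serves every $N$ at once.

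\emph{What it costs and buys.} Your cells can cross dyadic boundaries, so you need the least-common-ancestor charging. This works because the cells are pairwise disjoint order-intervals: each cut between two consecutive children of a level-$(k-1)$ cube is straddled by at most one cell. Hence at most (number of level-$k$ cubes meeting $\mathrm{spt}\,\omega$) $\lesssim 2^{k\beta}$ cells have diameter $\le 2\sqrt d\,2^{-k}$. All remaining cells, at most $N$ of them, have diameter $\lesssim 2^{-k_N}$. From there the geometric sum is exactly the paper's. A side benefit is that $k_N$ is only a cutoff in the estimate, not part of the construction, so you use the upper Ahlfors bound only to exclude atoms.

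\emph{Points to make precise when writing it up.}
\begin{itemize}
\item Make the order total on points: use half-open cubes and infinite addresses, so each address determines at most one point. Then non-atomicity of $\omega$ gives that $F_\#\omega$ is Lebesgue measure.
\item Count children that \emph{meet} $\mathrm{spt}\,\omega$, not only those of positive mass. A cell may contain support points of an $\omega$-null child, and such a cell would otherwise escape the count.
\item Put the $\omega$-null set $\{F=1\}$ into the last cell.
\end{itemize}
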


\begin{proof}
Let $K:=\mathrm{spt}\, \omega$. From Remark \ref{r:supporto compatto} we have that $K$ is compact and thus we can embed it into a half-open cube $Q^0 \subset \mathbb{R}^d$ of side length $\ell_0 > 0$. For each integer $k \ge 0$, let $\mathcal{D}_k$ denote the grid of half-open dyadic subcubes of $Q^0$ with side length $\ell_k := 2^{-k}\ell_0$. This induces a natural partition of $K$ at each generation $k$, given by $K = \bigsqcup_{Q\in\mathcal{D}_k} K_Q$, where $K_Q := K \cap Q$. 

Let $\mathcal{A}_k := \{Q \in \mathcal{D}_k : K_Q \neq \varnothing\}$ denote the family of active cubes at generation $k$. We claim that for $k$ sufficiently large, 
\begin{equation}\label{e:cardinal Ak}
\#\mathcal{A}_k \le C_1 \ell_k^{-\expAhlfors},    
\end{equation}
for some $C_1>0$. To see this, extract a maximal $\ell_k$-separated set $Z_k \subset K$. The maximality ensures $K \subset \bigcup_{z\in Z_k} B_{\ell_k}(z)$, while the separation implies that the balls $\{B_{\ell_k/2}(z)\}_{z\in Z_k}$ are pairwise disjoint. For $k$ large enough such that $\ell_k/2 < R_0$, the lower Ahlfors bound yields $\omega(B_{\ell_k/2}(z)) \ge C^{-1}_{\omega}(\ell_k/2)^\expAhlfors $. Since $\omega$ is a probability measure, it follows that $\# Z_k \le \frac{2^\expAhlfors }{c} \ell_k^{-\expAhlfors}$. The maximality ensures $K \subset \bigcup_{z\in Z_k} B_{\ell_k}(z)$, therefore, since each ball $B_{\ell_k}(z)$ intersects at most $C_d$ dyadic cubes of side length $\ell_k$, the bound on $\#\mathcal{A}_k$ immediately follows.

Given $N\in \N$, we define  $J = J(N)\in \N$ as the smallest positive integer such that $ C(\sqrt{d}\,\ell_J)^\expAhlfors = C\ell_0^\expAhlfors (\sqrt{d}\,2^{-J})^\expAhlfors  \le \frac1N$, hence if $N$ is large enough we get $\sqrt{d}\,\ell_J < R_0$ and $J(N)>1$. In particular, by the minimality of $J$, we have that 
\begin{equation}\label{e:Def di J}
    C\ell_0^\expAhlfors (\sqrt{d}\,2^{-J+1})^\expAhlfors  > \frac1N
\end{equation}
therefore there exists a constant $\overline{C}$, independent of $N$, such that 
\begin{equation}\label{e:logN J}
    J=J(N)\in( \overline{C}^{-1} \log N,\overline{C} \log N).
\end{equation}
Consequently, for any cube $Q \in \mathcal{A}_J$, picking any $x_Q \in K_Q$ gives $K_Q \subset B_{\sqrt{d}\,\ell_J}(x_Q)$, which by the upper Ahlfors bound guarantees $\omega(K_Q) \le \frac1N$.

Fixed $N\in \N$, we construct the partition inductively, moving from the terminal generation $J(N)$ upward to the root $Q^0$. For each active cube $Q \in \mathcal{A}_J$, we initialize its residue as $R_Q := K_Q$, and from the previous argument we have $\omega(R_Q)\leq \frac1N$. We proceed by backward induction on $k < J$. At generation $k$, for each $Q \in \mathcal{A}_k$, assume that for all active sub-cubes $Q' \in \operatorname{Sub}(Q) \subset \mathcal{A}_{k+1}$ we have already defined valid residues $R_{Q'} \subset K_{Q'}$ such that $\omega(R_{Q'}) \le \frac1N$. Then we define the pooled residue as
\begin{equation*}
    S_Q := \bigcup_{Q'\in\operatorname{Sub}(Q)} R_{Q'}.
\end{equation*}
Since a dyadic cube in $\mathbb{R}^d$ has exactly $2^d$ direct sub-cubes, $\omega(S_Q) \le 2^d / N$.

Let $m_Q := \lfloor N\omega(S_Q) \rfloor$. Now, by the Ahlfors regularity \eqref{e:alfhors reg} we have that $\omega$ is non-atomic, and therefore we can extract exactly $m_Q$ pairwise disjoint Borel sets $A_{Q,1}, \dots, A_{Q,m_Q} \subset S_Q$, each of mass exactly $\frac1N$ (see \cite[Corollary 1.21]{fonseca2007}). The updated residue for $Q$ is then defined as the remainder:
\begin{equation*}
    R_Q := S_Q \setminus \bigcup_{j=1}^{m_Q} A_{Q,j}.
\end{equation*}
By the definition of $m_Q$ we have that $\omega(R_Q) = \omega(S_Q) - \frac{1}{N}m_Q < \frac1N$. Furthermore, since each set $A_{Q,j}$ is contained in $Q$, we have $\operatorname{diam}(A_{Q,j}) \le \operatorname{diam}(Q) = \sqrt{d}\,\ell_k$.

This process propagates up to the root $Q^0$ at $k=0$. Let $\mathcal{F}$ denote the global collection of all the pairwise disjoint sets $A_{Q,j}$ extracted during the entire process across all scales. Let $M$ be the total number of sets in $\mathcal{F}$. The construction terminates with a final root residue $R_{Q^0}$, which satisfies $\omega(R_{Q^0}) < \frac1N$.

By mass conservation, since the original support $K$ is partitioned exactly into the $M$ sets of $\mathcal{F}$ and the final residue $R_{Q^0}$, we can write:
\begin{equation*}
    1 = \omega(K) = \sum_{A \in \mathcal{F}} \omega(A) + \omega(R_{Q^0}) = \frac MN + \omega(R_{Q^0}).
\end{equation*}
Rearranging this identity yields:
\begin{equation*}
    \omega(R_{Q^0}) = \frac{N - M}{N}.
\end{equation*}
Because the inductive process ensures $\omega(R_{Q^0}) < \frac1N$, we deduce that $(N - M)/N < \frac1N$, in turn implying so that $N - M < 1$, and finally $M = N$ since both $N$ and $M$ are non-negative integers.
Consequently, $\omega(R_{Q^0}) = 0$, proving that we have extracted exactly $N$ sets. Adding the null residue $R_{Q^0}$ to the first set $A_1$ produces the desired exact Borel partition $K = A_1 \sqcup \cdots \sqcup A_N$, with $\omega(A_i) = \frac1N$ for all $i$.

During the construction, at each step $k$, each cube $Q \in \mathcal{A}_k$ generates at most $2^d$ sets, each bounded in diameter by $\sqrt{d}\,\ell_k$. Summing over all scales, thanks to \eqref{e:cardinal Ak}, there exists a constant $\tilde C$, such that
\begin{align*}
& \sum_{i=1}^N \textup{diam}(A_i)^q\leq \textup{diam}(K)^q+ \sum_{i=2}^N \textup{diam}(A_i)^q \\ & \le \tilde C+ \tilde C \sum_{k=1}^{J(N)-1} (\#\mathcal{A}_k)\,\ell_k^q \le \tilde C \sum_{k=0}^{J(N)-1} \ell_k^{q-\expAhlfors}=  \tilde C\ell_0^{q-\expAhlfors} \sum_{k=0}^{J(N)-1} 2^{k(\expAhlfors-q)}.    
\end{align*}
In particular, if $q < \expAhlfors$, from \eqref{e:logN J} we get
\begin{equation*}
  \sum_{i=1}^N \textup{diam}(A_i)^q\leq \frac{\tilde C\ell_0^{q-\expAhlfors}}{2^{\expAhlfors-q}-1}2^{J(N)(\expAhlfors-q)}\leq  \hat C N^{\frac{\expAhlfors-q}{\expAhlfors}}=\hat C N^{1-\frac{q}{\expAhlfors}}.   
\end{equation*}
Instead, if $q = \expAhlfors$, again by \eqref{e:logN J} we obtain
\begin{equation*}
  \sum_{i=1}^N \textup{diam}(A_i)^q\leq \tilde C J(N)\leq  \hat C \log(N).
\end{equation*}
Eventually, in case $q > \expAhlfors$, the series $\sum_{k=0}^{\infty} 2^{k(\expAhlfors-q)}$ is convergent. This completes the proof.
\end{proof}

\begin{corollary}\label{c:upper senza connesso}
Assume that $\omega\in \mathcal{P}(\R^d)$ satisfies \eqref{e:alfhors reg}, then for every $N\geq N_0$ there exists $x_1,\dots,x_N \in \textup{spt} \; \omega$ such that 
\begin{equation}\label{e:upper-bound-non connesso}
\|\mu_N-\omega\|_{H_0^{-s}}\le\ C_L 
\begin{cases} 
    N^{-\frac12(1+\frac{q}{\expAhlfors})}, & \text{if } q < \expAhlfors,\\[2mm] 
    N^{-1}(\log N)^{\frac12}, & \text{if } q = \expAhlfors,\\[2mm] 
    N^{-1}, & \text{if } q > \expAhlfors. 
    \end{cases}
\end{equation}
where ${\mu_N}:=\frac1N\sum_{i=1}^N \delta_{x_i}$. 
\end{corollary}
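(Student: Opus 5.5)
The plan is to rerun the probabilistic argument of Theorem~\ref{t:upper bound in media}, with the Gigante--Leopardi partition of Lemma~\ref{l:Gigante-Leopardi} replaced by the partition of Lemma~\ref{l:partizione senza connessione}. Fix $N\ge N_0$, with $N_0$ as in Lemma~\ref{l:partizione senza connessione}, and let $A_1,\dots,A_N$ be the Borel partition of $\mathrm{spt}\,\omega$ provided there. Each cell satisfies $\omega(A_i)=\frac1N$, so $m_i:=N\omega\res A_i$ is a probability measure. Take independent $X_i\sim m_i$ and form the random empirical measure $\tilde\mu_N$ as in \eqref{e:random empirical}.

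The proof of Theorem~\ref{t:upper bound in media} never used the uniform diameter bound before its last line. In particular, the identification $\mathbb E[\Phi(X_i)]=m_i-\delta_0$, the decomposition $\tilde\mu_N-\tilde\omega=\frac1N\sum_i\zeta_i$ with $\zeta_i=\delta_{X_i}-m_i$, the orthogonality from Lemma~\ref{l:polarity}, and the variance identity of Lemma~\ref{l:hilbert-variance} combined with \eqref{e:isometry} all go through verbatim. They yield
\begin{equation*}
\mathbb E\big[\|\tilde\mu_N-\tilde\omega\|_{H_0^{-s}}^2\big]=\frac1{N^2}\sum_{i=1}^N\frac{\kappa_{d,q}}{2}\,\mathbb E[|X_i-X_i'|^q]\le\frac{\kappa_{d,q}}{2N^2}\sum_{i=1}^N\operatorname{diam}(A_i)^q .
\end{equation*}
The key point is to keep the sum rather than bounding it by $N$ times the maximal diameter. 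This is exactly the quantity controlled by Lemma~\ref{l:partizione senza connessione}.

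Inserting the three cases of that lemma gives a bound on the expectation of $\frac{\kappa_{d,q}\hat C}{2}$ times, respectively, $N^{-1-\frac q\beta}$ when $q<\beta$, $N^{-2}\log N$ when $q=\beta$, and $N^{-2}$ when $q>\beta$. Since the expectation of a nonnegative random variable bounds it from below on a set of positive probability, there is some $\eta\in\Omega$ for which the realization $x_i:=X_i(\eta)$ satisfies the same bound. We may choose $\eta$ so that additionally $x_i\in A_i\subset\mathrm{spt}\,\omega$, since this holds almost surely.

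For the realized points we have $\tilde\mu_N(\eta)-\tilde\omega=\mu_N-\omega$, because the $\delta_0$ terms cancel. Taking square roots yields \eqref{e:upper-bound-non connesso} with $C_L:=(\kappa_{d,q}\hat C/2)^{1/2}$. There is no real obstacle once Lemma~\ref{l:partizione senza connessione} is available. The only points needing care are checking that no step of the earlier proof secretly used $\max_i\operatorname{diam}(A_i)$, and noting that $N_0$ may need enlarging so that both lemmas' thresholds hold.
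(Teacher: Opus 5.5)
Your proposal is correct and follows the paper's proof essentially verbatim. You rerun the stratified-sampling argument of Theorem~\ref{t:upper bound in media} with the partition of Lemma~\ref{l:partizione senza connessione}, keep the bound as $\frac{\kappa_{d,q}}{2N^2}\sum_{i}\operatorname{diam}(A_i)^q$ instead of passing to the maximal diameter, insert the three regimes, and pick a good realization with $x_i\in A_i\subset\mathrm{spt}\,\omega$. The only quibble is that Lemma~\ref{l:Gigante-Leopardi} plays no role here, so $N_0$ is just the threshold of Lemma~\ref{l:partizione senza connessione} rather than the maximum of two thresholds.
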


\begin{proof}
The proof is entirely analogous to the proof of Theorem~\ref{t:upper bound in media}. Indeed,
one repeats the same probabilistic construction by sampling independently one point
in each element of an equimass partition of $\mathrm{spt}\,\omega$. The only
difference is that, instead of using the uniform diameter estimate provided by \cite[Theorem 2]{GL17} (cf. Lemma~\ref{l:Gigante-Leopardi}), we use the partition constructed in
Lemma~\ref{l:partizione senza connessione}. Thus the argument of Theorem~\ref{t:upper bound in media}
gives
\begin{equation*}
    \mathbb E\Big[ \|\mu_N-\omega\|_{H_0^{-s}}^2\Big]\le\frac{\kappa_{d,q}}{2N^2}
    \sum_{i=1}^N \textup{diam}(A_i)^q .
\end{equation*}
Applying Lemma~\ref{l:partizione senza connessione}, we obtain
\begin{equation*}
    \mathbb E\Big[
        \|\mu_N-\omega\|_{H_0^{-s}}^2
    \Big]
    \le C
    \begin{cases}
        N^{-(1+\frac q{\expAhlfors})}, & \text{if } q<\expAhlfors,\\[1mm]
        N^{-2}\log N, & \text{if } q=\expAhlfors,\\[1mm]
        N^{-2}, & \text{if } q>\expAhlfors.
    \end{cases}
\end{equation*}
Therefore there exists at least one realization of the random variables for which the
same bound holds, giving \eqref{e:upper-bound-non connesso} after taking the square root.  
\end{proof}

The previous corollary shows that, without the connectedness assumption, the upper bound may deteriorate in the critical and supercritical regimes.
We prove next that this loss is not merely a technical point of the proof. The obstruction is present for any probability measure with compact and disconnected support. In particular, if $\expAhlfors\in(0,2)$ and $\omega$ satisfies \eqref{e:alfhors reg}, the result below shows that for every $q\in(\expAhlfors,2)$ the upper bound obtained in Corollary~\ref{c:upper senza connesso} is optimal in case the support of $\omega$ is not connected. In addition, the same holds true for any probability measure $\omega$ with disconnected support in view of 
\eqref{eq:stat}.
\begin{proposition}
    Assume that $\omega\in \mathcal{P}(\R^d)$ with $\mathrm{spt}\,\omega$ compact and disconnected. Then there exist a subsequence of integers $(N_j)_{j\in\mathbb N}$ and a constant $C>0$ such that 
    \[
     \|\mu_{N_j}-\omega\|_{H_0^{-s}}\geq \frac{C}{N_j}
    \]
    for every empirical measure $\mu_{N_j}=\frac1{N_j}\sum_{i=1}^{N_j}\delta_{x_i}$.
\end{proposition}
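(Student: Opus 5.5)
Since $\mathrm{spt}\,\omega$ is compact and disconnected, I will write it as $K_1\sqcup K_2$ with $K_1,K_2$ nonempty, disjoint and relatively closed, hence compact, at positive distance $3\delta>0$. Set $a:=\omega(K_1)\in(0,1)$. The obstruction is arithmetic. An empirical measure $\mu_N$ places mass $k/N$ on the open $\delta$-neighbourhood $U_1$ of $K_1$, and $k/N$ cannot be closer than $\mathrm{dist}(a,\frac1N\mathbb Z)$ to $a$. So I will choose the subsequence $N_j$ so that $N_j a$ stays at distance at least some $c_0>0$ from $\mathbb Z$. If $a$ is irrational, or rational with denominator $p\ge2$, such a sequence exists. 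If $a$ is irrational, the fractional parts of $Na$ are dense, so infinitely many fall in $[1/4,3/4]$. If $a=r/p$ in lowest terms with $p\ge2$, I take $N_j\equiv1 \pmod p$, so $N_ja\equiv r/p\notin\mathbb Z$ and $c_0=1/p$. Since $0<a<1$, $a$ is never an integer, so these cases cover everything. Then
\[
|\mu_{N_j}(U_1)-\omega(U_1)|=\Bigl|\tfrac kN_j-a\Bigr|\ge \frac{c_0}{N_j}
\]
for every configuration.

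Next I will turn this mass defect into an $H_0^{-s}$ lower bound by duality, using Proposition~\ref{p:dual-norm-exact}. I fix a test function $\phi\in C_c^\infty(\R^d)$, hence in $\mathcal S$, with $0\le\phi\le1$, $\phi\equiv1$ on the $\delta$-neighbourhood of $K_1$, and $\phi\equiv0$ on the $2\delta$-neighbourhood of $K_2$. This $\phi$ does not depend on $N$ and satisfies $\|\phi\|_{H_0^s}<\infty$. Then $\int\phi\,d\omega=a$, while $\int\phi\,d\mu_N$ equals $k/N$ plus a contribution from points lying in the transition region where $0<\phi<1$. That contribution ruins exactness.

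This is the main obstacle: points placed in the transition region can make $\int\phi\,d\mu_N$ hit $a$ exactly. To handle it, I will average over a family of cutoffs. Take $\phi_t$, $t\in[1,2]$, with $\phi_t=1$ on $\{\mathrm{dist}(\cdot,K_1)\le t\delta\}$ and a sharp-ish transition. This does not directly help either, since an adversarial point can sit at the right place.

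Better is to split into two cases. If the total $\mu_N$-mass outside $W:=\{\mathrm{dist}(\cdot,\mathrm{spt}\,\omega)<\delta/2\}$ is at least $c_0/(2N)$, I use a second test function $\chi\in C_c^\infty$ vanishing on $\mathrm{spt}\,\omega$ and equal to $1$ on the $\delta/2$-annulus shell up to a large radius, i.e.\ on $B_R\setminus W$. Then $\int\chi\,d\omega=0$ and $\int\chi\,d\mu_N\ge c_0/(2N)$. Points far away are handled by taking $R$ large: the $H_0^{-s}$ norm is large anyway when points are at distance $\gtrsim R$, by comparing with the $q$-energy through Theorem~\ref{t:equivalenza}, or I simply truncate the choice of configurations. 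Otherwise, all but mass $c_0/(2N)$ lies in $W$, where $\phi$ takes only the values $0$ and $1$, so
\[
\Bigl|\int\phi\,d(\mu_N-\omega)\Bigr|\ge \frac{c_0}{N}-\frac{c_0}{2N}.
\]
In both cases the dual formula gives $\|\mu_N-\omega\|_{H_0^{-s}}\ge (2\pi)^d c/(N\max(\|\phi\|_{H^s_0},\|\chi\|_{H^s_0}))$.

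The remaining issue is far-away points. I will dispose of it by noting that a point at distance $\ge R$ from $\mathrm{spt}\,\omega$ gives $\mathcal E_q^2\gtrsim R^q/N$ minus $O(1/N^2)$ corrections. To make this precise, I will expand the energy and use $|x_i-y|^q\ge (R-\mathrm{diam})^q$ against bounded interactions, which exceeds $C^2/N^2$ for $N$ large.
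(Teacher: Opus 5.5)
Your overall structure is the paper's. You choose $N_j$ arithmetically: your $N_j\equiv 1 \pmod p$ is the paper's $N_j=jn+1$, and your density argument replaces its contradiction argument in the irrational case. You then split into configurations with a point away from $\mathrm{spt}\,\omega$ and configurations lying in a neighbourhood where a fixed cutoff takes only the values $0$ and $1$. Your second case is fine.

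Two small remarks on it:
\begin{itemize}
\item Since $c_0\le\frac12$ and $\mu_N$ charges every set in multiples of $1/N$, your mass threshold simply distinguishes ``some $x_i\notin W$'' from ``all $x_i\in W$''. So in the second case no point lies in the transition region at all.
\item With $\mathrm{dist}(K_1,K_2)=3\delta$, no continuous $\phi$ can equal $1$ on the $\delta$-neighbourhood of $K_1$ and $0$ on the $2\delta$-neighbourhood of $K_2$, because their closures meet. You need to shrink one of them.
\end{itemize}

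The genuine gap is the first case when the points outside $W$ lie outside $B_R$. Your fixed $\chi$ does not see them (and it would also need $\chi\ge0$). The energy argument you sketch is false as stated. If the other $N-1$ points approximate $\omega$ well, then $\mu_N-\omega\approx\frac1N(\delta_{x_1}-\omega)$. Hence $\mathcal E_q^2(\mu_N,\omega)\approx N^{-2}\mathcal E_q^2(\delta_{x_1},\omega)\sim R^q/N^2$, not $R^q/N$: the $R^q/N$ contribution of the cross term is cancelled by the self-interaction of $x_1$ with the other points.

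So everything hinges on the terms you call $O(1/N^2)$ corrections, and these are neither of that size nor bounded. The cross term and the self-interaction of $x_1$ combine into $\frac2N\int|x_1-y|^q\,\dd(\omega-\mu_N)(y)$, whose sign and size depend on all the other points. The mutual interactions $|x_i-x_j|^q$ are unbounded when several points are far away. Truncating the admissible configurations is not an option either, since the statement quantifies over every $\mu_{N_j}$.

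The missing idea is that the test function in Proposition~\ref{p:dual-norm-exact} may depend on the configuration, while the $H_0^s$ norm is translation invariant: $|\Fhat{\varphi(\cdot-z)}|=|\Fhat{\varphi}|$, which is Lemma~\ref{l:Hdots-scaling} with $r=1$. Take $\varphi\in C_c^\infty(B_{\delta/4}(0))$ with $0\le\varphi\le1$ and $\varphi(0)=1$, and test with $\varphi(\cdot-x_i)$ for any $x_i\notin W$.
\begin{itemize}
\item Its support misses $\mathrm{spt}\,\omega$, so $\int\varphi(\cdot-x_i)\,\dd\omega=0$.
\item It is nonnegative with value $1$ at $x_i$, so $\int\varphi(\cdot-x_i)\,\dd\mu_N\ge 1/N$.
\item Therefore $\|\mu_N-\omega\|_{H_0^{-s}}\ge (2\pi)^d/(N\|\varphi\|_{H_0^s})$, wherever $x_i$ lies.
\end{itemize}
This is exactly how the paper treats this case, and it removes both $R$ and the energy expansion.
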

\begin{proof}
Let $U$ and $V$ be disjoint and forming a non-trivial open cover (in the relative topology) of $\mathrm{spt}\,\omega$. In turn, $U$ and $V$ are closed and thus compact. Moreover, both have positive measure with respect to $\omega$.

Let $\delta:=\frac14\mathrm{dist}(U,V)>0$, 
$U_\delta:=\{x\in \R^d:\,\mathrm{dist}(x,U)<\frac\delta4\}$ and $V_\delta$ defined analogously with $V$ in place of $U$. Then, $U_\delta$ and 
$V_\delta$ are disjoint open sets such that $U\subset\subset U_\delta$
and $V\subset\subset V_\delta$, with $U_\delta\cap V_\delta=\emptyset$.

Consider next functions $\varphi_1\in C^\infty_c(U_{2\delta},[0,1])$ with 
$\varphi_1\equiv 1$ on $U_\delta$, and  $\varphi_2\in C^\infty_c(B_{\sfrac\delta8}[0,1])$, with $\varphi_2\equiv 1$ on $B_{\sfrac\delta{16}}$. 
Let $\mu_N=\frac1N\sum_{i=1}^N\delta_{x_i}$ be any empirical measure.

We first study the case in which there is (at least) one point 
$x_i\in \R^d\setminus (U_\delta\cup V_\delta)$, 
$i\in\{1,\ldots, N\}$. 
By applying the duality formula in Proposition~\ref{p:dual-norm-exact},
we then estimate as follows
\begin{align*}
\|\mu_N-\omega\|_{H_0^{-s}}
\ge (2\pi)^d\left|\int \frac{\varphi_2(\cdot-x_i)}{\|\varphi_2\|_{H_0^s}}\,d(\mu_N-\omega)\right|
\ge\frac{(2\pi)^d}{\|\varphi_2\|_{H_0^s}N}\,.
\end{align*}

We conclude by considering the remaining case in which $\{x_1,\ldots, x_N\}\subset U_\delta\cup V_\delta$.
By applying the duality formula in Proposition~\ref{p:dual-norm-exact},
we get
\begin{align*}
\|\mu_N-\omega\|_{H_0^{-s}}
\ge (2\pi)^d\left|\int \frac{\varphi_1}{\|\varphi_1\|_{H_0^s}}\,d(\mu_N-\omega)\right|
=\frac{(2\pi)^d}{\|\varphi_1\|_{H_0^s}}\left|\frac{\kappa_N}{N}-\omega(U)\right|\,,    
\end{align*}
where $\kappa_N:= \#\{i\in \{1,\dots, N\} \; : \; x_i\in U_\delta\}
\in\{0,\ldots,N\}$.

We claim that there exist a subsequence of integers $(N_j)_{j\in\mathbb N}$ and a constant $C>0$ such that for every $j\in\mathbb N$
\[
\left|\frac{\kappa_{N_j}}{N_j}-\omega(U)\right|\ge \frac{C}{N_j}.
\]
Indeed, if $\omega(U)=\frac{m}{n}\in\mathbb{Q}$, where $m$ and $n$ are relatively prime integers, it suffices to choose $N_j:=jn+1$ to infer 
\[
\left|\frac{\kappa_{N_j}}{N_j}-\omega(U)\right|\ge \frac{1}{N_j}\min\{\omega(U),\omega(V)\}.
\]
Otherwise, if $\omega(U)\in\mathbb{R}\setminus\mathbb{Q}$ assume by contradiction that $|\kappa_N-N\omega(U)|\to0$ as $N\to\infty$.
In particular, $\mathrm{dist}(N\omega(U),\mathbb{Z})\to0$ as $N\to\infty$.
Thus, estimating
\[
\mathrm{dist}(\omega(U),\mathbb{Z})\leq
\mathrm{dist}((N+1)\omega(U),\mathbb{Z})+
\mathrm{dist}(-N\omega(U),\mathbb{Z})\,,
\]
by letting $N\to\infty$ we conclude that $\omega(U)\in\mathbb{Z}$, a contradiction.
\end{proof}

\section{Sharp comparison between \texorpdfstring{$\mathcal{E}_q$}{Eq} 
and the Wasserstein distance}\label{sec:W1}
The energy distance $\mathcal{E}_q$ and the Wasserstein distances $W_p$ 
induce comparable (weak) topologies on $\mathcal{P}(\R^d)$, as established in 
Proposition~\ref{prop:edtop}.  Related quantitative comparisons between 
Wasserstein distances and Fourier-based probability metrics have been studied in 
the kinetic and imaging literature; see, for instance, 
\cite{Auricchio2020}, where equivalence estimates 
with $W_1$ and $W_2$ are proved in a discrete setting, and the broader discussion 
in \cite{auricchio2026kinetic}.

In this section we sharpen this qualitative 
comparison to a quantitative inequality, showing that  $\mathcal{E}_q^2(\mu,\nu) \leq  W_1^q(\mu,\nu)$, with equality  characterized exactly. This estimate is sharp -- equality holds precisely  for pairs of Dirac masses -- and implies in particular that  $\mathcal{E}_q^2 \leq  W_1^q \leq  W_q^q$. To the best of our knowledge, this estimate has previously appeared only in the case $q = 1$ \cite[Theorem~2]{hertrich2024generative}, \cite[Lemma~5]{hagemann2024posterior}. The extension to general $q \in (0,2)$ and the characterization of equality cases appear to be new.
\begin{theorem}\label{thm:W1}
 Let $\mu,\nu \in \mathcal{P}(\R^d)$ with compact support then 
 \begin{equation}\label{e:W_1 comparison}
     \|\mu-\nu\|^2_{H_0^{-s}}\leq \kappa_{d,q}\; W^q_1(\mu,\nu),
 \end{equation}
 where $W_1(\cdot,\cdot)$ is the $1$-Wasserstein distance and $\kappa_{d,q}$ is the constant given in Theorem~\ref{t:main-intro}. 
 Furthermore the equality in \eqref{e:W_1 comparison} holds if and only if either $\mu=\nu$ or $\mu=\delta_{x_0}$ and $\nu=\delta_{y_0}$ for some $x_0,y_0\in \R^d$.
\end{theorem}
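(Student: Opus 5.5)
Fix an optimal coupling $\pi\in\Gamma(\mu,\nu)$ for $W_1$; it exists since the supports are compact. The plan is to write $\mu-\nu=\int(\delta_x-\delta_y)\,d\pi(x,y)$ as an $\overline{\calM_0}$-valued Bochner integral of the map $(x,y)\mapsto\delta_x-\delta_y$. This map is continuous into $\overline{\calM_0}$ by \eqref{e:isometry}, since $\|\delta_x-\delta_{x'}\|_{H^{-s}_0}=\kappa_{d,q}^{1/2}|x-x'|^{q/2}$, and it is bounded on the compact set $\mathrm{spt}\,\pi$. The identification of the Bochner integral with $\mu-\nu$ goes exactly as in \eqref{e:uguaglianza valore atteso}: test against $f\in\mathcal S$ and invoke Corollary~\ref{c:Schwartz-separates}.

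Minkowski's inequality for Bochner integrals, followed by Jensen's inequality for the concave function $t\mapsto t^{q/2}$, would only give $\|\mu-\nu\|^2\le\kappa_{d,q}(\int|x-y|^{q/2}d\pi)^2\le\kappa_{d,q}W_{q/2}^q$. That is the wrong exponent when $q<2$, so I would work directly with the squared norm. Expand it using \eqref{e:equiv}:
\[
\|\mu-\nu\|^2_{H^{-s}_0}=\iint\langle\delta_x-\delta_y,\delta_{x'}-\delta_{y'}\rangle\,d\pi\,d\pi=\frac{\kappa_{d,q}}{2}\iint\bigl(|x-y'|^q+|x'-y|^q-|x-x'|^q-|y-y'|^q\bigr)\,d\pi(x,y)\,d\pi(x',y').
\]
The key pointwise estimate I would prove is the four-point inequality
\[
|x-y'|^q+|x'-y|^q-|x-x'|^q-|y-y'|^q\le 2\,\bigl(\tfrac{|x-y|+|x'-y'|}{2}\bigr)^q .
\]
In the worst case the left side equals $2\,\mathrm{Re}\langle u,v\rangle$ with $u=\delta_x-\delta_y$ and $v=\delta_{x'}-\delta_{y'}$. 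It is bounded by $\|u\|^2+\|v\|^2$ over $\kappa$, i.e.\ by $|x-y|^q+|x'-y'|^q$, and I would try to improve this to the stated mean form. For instance, the triangle inequality $|x-y'|\le|x-x'|+|x'-y'|$ together with subadditivity of $t^q$ should work when $q\le1$; when $q\in(1,2)$ I expect to need a convexity argument along the segment.

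After integrating, the pointwise estimate would give $\|\mu-\nu\|^2\le\kappa_{d,q}\iint m(\cdot)^q\,d\pi\,d\pi$, where $m$ denotes the average of $|x-y|$ and $|x'-y'|$. The hope is to close with Jensen ($t^q$ is concave for $q\le1$) to obtain $\kappa_{d,q}(\int|x-y|\,d\pi)^q=\kappa_{d,q}W_1^q$. This \textbf{pointwise-plus-Jensen step is the main obstacle}: for $q>1$ concavity fails, so I would instead try to prove the inequality $\le 2|x-y|^{q/2}|x'-y'|^{q/2}$. This is the Cauchy--Schwarz form $\mathrm{Re}\langle u,v\rangle\le\|u\|\|v\|$, which holds for any $q$. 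Integrating it yields $\kappa_{d,q}(\int|x-y|^{q/2}d\pi)^2$. With a coupling optimal for $W_{q/2}$ this is $W_{q/2}^q$, and then $W_{q/2}^q\le W_1^q$ because $q/2\le1$ (by Jensen, $\int|x-y|^{q/2}d\pi\le(\int|x-y|\,d\pi)^{q/2}$ for any coupling). So the inequality can in fact be closed via Cauchy--Schwarz plus Jensen, sidestepping the four-point refinement.

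For the equality case, equality forces equality in both steps. Equality in Jensen for the strictly concave $t^{q/2}$ means $|x-y|=c$ $\pi$-a.e. Equality in Cauchy--Schwarz means $\delta_x-\delta_y$ is $\pi$-a.e.\ a nonnegative multiple of a fixed element of $\overline{\calM_0}$, and for Dirac differences this forces $(x,y)$ to be constant $\pi$-a.e. unless $c=0$. Hence either $\mu=\nu$ or both measures are Diracs. Conversely, the case $\mu=\delta_{x_0}$, $\nu=\delta_{y_0}$ is exactly \eqref{e:isometry}.
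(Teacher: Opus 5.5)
Your final argument is the paper's proof in squared form. You expand $\|\mu-\nu\|^2_{H^{-s}_0}$ over $\pi\otimes\pi$, bound each cross term by Cauchy--Schwarz to get $\kappa_{d,q}\bigl(\int|x-y|^{q/2}\,d\pi\bigr)^2$, apply Jensen to the $W_1$-optimal $\pi$, and read off the equality case from equality in both steps; the paper does the same with the triangle inequality $\|\mathbb{E}[\Phi(X)-\Phi(Y)]\|_{H^{-s}_0}\le\mathbb{E}[\|\Phi(X)-\Phi(Y)\|_{H^{-s}_0}]$ followed by Jensen.

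Your early dismissal of the Minkowski--Jensen route was mistaken. Jensen for that same $\pi$ gives $\bigl(\int|x-y|^{q/2}\,d\pi\bigr)^2\le\bigl(\int|x-y|\,d\pi\bigr)^q=W_1^q$ directly, so that route already has the right exponent and the four-point refinement is unnecessary. Keep the single $W_1$-optimal coupling throughout rather than switching to a $W_{q/2}$-optimal one: such a coupling may charge the diagonal, where Cauchy--Schwarz equality gives no information, and the equality case would then need an extra argument.
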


\begin{proof}
    Let $\pi\in\Pi(\mu,\nu)$ be an optimal coupling for $W_1$, and let $X,Y:\Omega\to \R^d$ be two random variables such that $(X,Y)\sim \pi$, and thus
\begin{equation*}
W_1(\mu,\nu)=\mathbb{E}[|X-Y|].    
\end{equation*}
Using the same argument in \eqref{e:uguaglianza valore atteso} we have that $\mathbb{E}[\Phi(X)]=\mu-\delta_0$ and $\mathbb{E}[\Phi(Y)]=\nu-\delta_0$, therefore
\begin{align}\label{e:eq1 wasserstein}
&\|\mu-\nu\|_{H^{-s}_0}=\big\|\E[\Phi(X)]-\E[\Phi(Y)]\big\|_{H^{-s}_0}
\\ & =\big\|\E[\Phi(X)-\Phi(Y)]\big\|_{H^{-s}_0}  \leq \E[\|\Phi(X)-\Phi(Y)\|_{H^{-s}_0}]. \nonumber
\end{align}
Furthermore, from \eqref{e:isometry}, we have 
\begin{equation*}
\E[\|\Phi(X)-\Phi(Y)\|_{H^{-s}_0}]=\kappa_{d,q}^{\frac12}\E[|X-Y|^{q/2}].    
\end{equation*}
Since $q/2\in(0,1)$, the function $t\mapsto t^{q/2}$ is concave on $[0,\infty)$, hence Jensen's inequality gives
\begin{equation}\label{e:eq 2 wasserstein}
\E[|X-Y|^{q/2}]\ \le\ (\E[|X-Y|])^{q/2}=W^{q/2}_1(\mu,\nu).    
\end{equation}
Combining the last three equations proves \eqref{e:W_1 comparison}.

We now characterize the equality cases. If $\mu=\nu$, then
\begin{equation*}
\|\mu-\nu\|_{H_0^{-s}}=0
\qquad\text{and}\qquad
W_1(\mu,\nu)=0,    
\end{equation*}
so equality holds. If instead $\mu=\delta_x$ and $\nu=\delta_y$, then
\begin{equation*}
  W_1(\delta_x,\delta_y)=|x-y|,  
\end{equation*}
and therefore by \eqref{e:isometry} we have
\begin{equation*}
\|\delta_x-\delta_y\|_{H_0^{-s}}^2
= \kappa_{d,q}|x-y|^q = \kappa_{d,q}W_1(\delta_x,\delta_y)^q.    
\end{equation*}
Thus equality also holds in this case. It remains to prove the converse. Suppose that
\begin{equation}\label{e:caso uguaglianza}
\|\mu-\nu\|_{H_0^{-s}}^2=\kappa_{d,q}W_1(\mu,\nu)^q.    
\end{equation}
Set $m:=W_1(\mu,\nu)=\mathbb E [|X-Y|]$. If $m=0$ then, since $W_1$ is a distance, we have $\mu=\nu$. Assume now that $m>0$. From \eqref{e:eq1 wasserstein} and \eqref{e:eq 2 wasserstein}, all
intermediate inequalities must be equalities. 
Hence
\begin{equation}\label{e:uguaglianza dis norma}
\|\mathbb E [\Phi(X)-\Phi(Y)]\|_{H_0^{-s}}=\mathbb E[\|\Phi(X)-\Phi(Y)\|_{H_0^{-s}}],    
\end{equation}
and
\begin{equation}\label{e:uguaglianza jensen}
\mathbb E[|X-Y|^{\frac{q}{2}}]=(\mathbb E[|X-Y|])^{\frac{q}{2}}.    
\end{equation}
Thanks to the strictly concavity of \(t\mapsto t^{\frac{q}{2}}\), equality \eqref{e:uguaglianza jensen} in Jensen's inequality is
possible only if $|X-Y|$ is almost surely constant. Consequently
\begin{equation}\label{e:cost differenza}
|X(\omega)-Y(\omega)|=m \quad    
\end{equation}
for $\mathbb{P}$-a.e. $\omega\in \Omega$. From \eqref{e:eq1 wasserstein}, \eqref{e:caso uguaglianza}, \eqref{e:uguaglianza dis norma}, \eqref{e:cost differenza} and \eqref{e:isometry} we have 
\begin{equation*}
    \|\E[\Phi(X)-\Phi(Y)]\|_{H^{-s}_0}=\kappa_{d,q}^{1/2}m^{\frac{q}{2}}>0.
\end{equation*}
Then setting 
\begin{equation*}
w:=\frac{\mathbb E [\Phi(X)-\Phi(Y)]}{\|\mathbb E [\Phi(X)-\Phi(Y)]\|_{H_0^{-s}}}, 
\end{equation*}
we have
\begin{align*}
& 0=\mathbb E[\|\Phi(X)-\Phi(Y)\|_{H_0^{-s}}]-
\|\mathbb E[\Phi(X)-\Phi(Y)]\|_{H_0^{-s}} \\ & =\mathbb E[\|\Phi(X)-\Phi(Y)\|_{H_0^{-s}}]
-\left\langle \mathbb E[\Phi(X)-\Phi(Y)],w\right\rangle_{H_0^{-s}} \\ &=\mathbb E[
\|\Phi(X)-\Phi(Y)\|_{H_0^{-s}}
-\left\langle \Phi(X)-\Phi(Y),w\right\rangle_{H_0^{-s}}
],
\end{align*}
where 
\begin{equation*}
\|\Phi(X)-\Phi(Y)\|_{H_0^{-s}}
-
\left\langle \Phi(X)-\Phi(Y),w\right\rangle_{H_0^{-s}}\geq 0,    
\end{equation*}
by Cauchy-Schwartz inequality. Since the expectation is zero, it must vanish almost surely.
Therefore
\begin{equation*}
\left\langle \Phi(X)-\Phi(Y),w\right\rangle_{H_0^{-s}}
=
\|\Phi(X)-\Phi(Y)\|_{H_0^{-s}}
\end{equation*}
and consequently
\begin{equation*}
\Phi(X)-\Phi(Y)=\|\Phi(X)-\Phi(Y)\|_{H_0^{-s}}w
\end{equation*}
almost surely. 
Since $\|\Phi(X)-\Phi(Y)\|_{H_0^{-s}}$ is almost surely constant, from \eqref{e:cost differenza} and \eqref{e:isometry}, thus $\Phi(X)-\Phi(Y)$ itself is
almost surely constant. In particular there exists an element $h\in \overline{\mathcal{M}_0}$ such that
\begin{equation*}
\Phi(X)-\Phi(Y)=\delta_X-\delta_Y=h
\qquad\text{almost surely.}    
\end{equation*}
Define the closed set
\begin{equation*}
   S:=
\left\{
(x,y)\in \R^d\times\R^d:
\delta_x-\delta_y=h,
\quad
|x-y|=m
\right\}. 
\end{equation*}
From the previous argument,
\begin{equation*}
\pi(S)=1.    
\end{equation*}
In particular $S$ is non-empty, but from it's definition we have that $S=\{(x_0,y_0)\}$ for some $x_0,y_0\in \R^d$ because for every $x,x',y,y' \in \R^d$ we have that $\delta_x-\delta_y=\delta_{x'}-\delta_{y'}$ implies $x=x'$ and $y=y'$. Therefore $\pi=\delta_{(x_0,y_0)}$ and $\mu=\delta_{x_0}$, $\nu=\delta_{y_0}$, concluding the proof.
\end{proof}

\begin{center}
  \FundingLogos
  \end{center}
  \vspace{0.5em}
  \begin{tcolorbox}\centering\small
   
    Funded by the European Union. Views and opinions expressed are however those of the author(s) only and do not necessarily reflect those of the European Union or the European Research Council Executive Agency. Neither the European Union nor the granting authority can be held responsible for them. This project has received funding from the European Research Council (ERC) under the European Union’s Horizon Europe research and innovation programme (grant agreement No. 101198055, project acronym NEITALG).
    
  \end{tcolorbox}

\bibliographystyle{plain}
\bibliography{bib}

\end{document}